\theoremstyle{definition}
\newtheorem{thm}{Theorem}[section]
\newtheorem{main}{Theorem}
\newtheorem{lemma}[thm]{Lemma}
\newtheorem{corollary}[thm]{Corollary}
\newtheorem{remark}[thm]{Remark}
\newtheorem{dfn}[thm]{Definition}
\newtheorem{prop}[thm]{Proposition}
\newtheorem{exa}[thm]{Example}
\newcommand{\mf}[1]{{\mathfrak{#1}}}
\newcommand{\mb}[1]{{\mathbf{#1}}}
\newcommand{\bb}[1]{{\mathbb{#1}}}
\newcommand{\mca}[1]{{\mathcal{#1}}}
\newcommand{\mr}[1]{{\mathrm{#1}}}
\newcommand{\msc}[1]{{\mathscr{#1}}}
\newcommand{\msf}[1]{{\mathsf{#1}}}
\newcommand{\da}[1]{\big\downarrow\raise.5ex\rlap{$\scriptstyle#1$}}
\begin{document}

\title{$(q,t)$-chromatic symmetric functions}
\author{Tatsuyuki Hikita}
\address{\textsc{Research Institute for Mathematical Sciences, Oiwake Kita-Shirakawa Sakyo Kyoto 606-8502 JAPAN}}
\email{\texttt{thikita@kurims.kyoto-u.ac.jp}}
\date{}

\begin{abstract}
	By using level one polynomial representations of affine Hecke algebras of type $A$, we obtain a $(q,t)$-analogue of the chromatic symmetric functions of unit interval graphs which generalizes Syu Kato's formula for the chromatic symmetric functions of unit interval graphs. We show that at $q=1$, the $(q,t)$-chromatic symmetric functions essentially reduce to the chromatic quasisymmetric functions defined by Shareshian-Wachs, which in particular gives an algebraic proof of Kato's formula. We also give an explicit formula of the $(q,t)$-chromatic symmetric functions at $q=\infty$, which leads to a probability theoretic interpretation of $e$-expansion coefficients of chromatic quasisymmetric functions used in our proof of the Stanley-Stembridge conjecture.
		
	Moreover, we observe that the $(q,t)$-chromatic symmetric functions are multiplicative with respect to certain deformed multiplication on the ring of symmetric functions. We give a simple description of such multiplication in terms of the affine Hecke algebras of type $A$. We also obtain a recipe to produce $(q,t)$-chromatic symmetric functions from chromatic quasisymmetric functions, which actually makes sense for any oriented graphs. 
\end{abstract}

\maketitle

\section{Introduction}

The chromatic symmetric function $\mb{X}_\Gamma$ of a graph $\Gamma$ introduced by Stanley \cite{Sta95} is an actively studied graph invariants (see e.g. \cite{CKL20,MMS24,OS14}). Its $t$-analogue $\mb{X}_{\Gamma}(t)$ is introduced by Shareshian-Wachs \cite{SW12,SW16} and called chromatic quasisymmetric function, which is shown to be symmetric when $\Gamma$ is a unit interval graph. The number of unit interval graphs with $n$ vertices is given by the $n$-th Catalan number, and hence there are many bijections between unit interval graphs and other combinatorial objects such as Dyck paths, Hessenberg functions, and $312$-avoiding permutations. Correspondingly, $\mb{X}_{\Gamma}(t)$ is known to be related for example to the LLT polynomials \cite{CM18}, the cohomology of the regular semisimple Hessenberg varieties of type $A$ \cite{BC18,GP16,SW12,SW16}, and the characters of the Hecke algebras of type $A$ at Kazhdan-Lusztig basis elements \cite{CHSS16,Hai93}. 

In this paper, we introduce additional parameter $q$ to the chromatic quasisymmetric functions. Our starting point of this work is Syu Kato's formula \cite{Kat24} for the chromatic symmetric functions of unit interval graphs. In \cite{Kat23}, Kato constructed a smooth projective algebraic variety $\mf{X}_{\Gamma}$ with an action of $GL_n(\bb{C}[[z]])$ and a $GL_n(\bb{C}[[z]])$-equivariant morphism 
\begin{align*}
	\msf{m}_{\Gamma}:\mf{X}_{\Gamma}\rightarrow \mr{Gr}
\end{align*}
to the affine Grassmannian $\mr{Gr}$ of $GL_n$. By the decomposition theorem \cite{BBD81}, the pushforward of the constant sheaf on $\mf{X}_{\Gamma}$ by $\msf{m}_{\Gamma}$ is a direct sum of shifted irreducible $GL_n(\bb{C}[[z]])$-equivariant perverse sheaves on $\mr{Gr}$. Via the geometric Satake correspondence \cite{MV07}, this produces a graded $GL_n$-representation and \cite{Kat24} (see also \cite{Kam24}) showed that its graded character is given by $\mb{X}_{\Gamma}(t)$ up to certain shift of grading.

In \cite{Kat24}, Kato also gives a formula for $\mb{X}_\Gamma$ by using the affine Weyl group $\widetilde{\mf{S}}_{n}=\langle s_0,s_1,\ldots,s_{n-1}\rangle$ of type $A_{n-1}$. In the group algebra $\bb{Q}[\widetilde{\mf{S}}_{n}]$ of $\widetilde{\mf{S}}_{n}$, consider an element
\begin{align*}
	\overline{\msf{S}}^{(n)}_{i,e}\coloneqq 1+s_{i}+s_{i+1}s_{i}+\cdots +s_{n+e-1}s_{n+e-2}\cdots s_{i}\in\bb{Q}[\widetilde{\mf{S}}_{n}]
\end{align*}
for each $i-n\leq e<i$, where the indices of $s$ are considered to be an element of $\bb{Z}/n\bb{Z}$ and we have $n+e-i+1$ terms in this formula. We have a level one action of $\widetilde{\mf{S}}_{n}$ on the ring of Laurent polynomials of $n$-variables
\begin{align*}
	\bb{Q}(q)[X^{\pm1}]_{(n)}\coloneqq\bb{Q}(q)[X_1^{\pm1},\ldots,X_n^{\pm1}]
\end{align*}
over $\bb{Q}(q)$ given by 
\begin{align*}
	s_i(F(X_1,\ldots,X_n))=\begin{cases}
F(\ldots,X_{i+1},X_i,\ldots)&\mbox{ if }i=1,\ldots,n-1,\\
q^{-1}X_1X_n^{-1}F(qX_n,X_2,\ldots,X_{n-1},q^{-1}X_{1})&\mbox{ if }i=0.
\end{cases}
\end{align*}
For each unit interval graph $\Gamma$ of $n$ vertices, we may associate a sequence of integers $\msf{e}$ in 
\begin{align*}
\bb{E}_n\coloneqq\left\{\msf{e}=(\msf{e}(1),\ldots,\msf{e}(n))\in\bb{Z}^n\middle|0\leq \msf{e}(i)<i,\msf{e}(i)\leq \msf{e}(i+1)\right\}.
\end{align*}
Using this sequence, Kato's formula can be stated as
\begin{align*}
	\mb{X}_{\Gamma}^{(n)}=\overline{\msf{S}}^{(n)}_{1,\msf{e}(1)}\overline{\msf{S}}^{(n)}_{2,\msf{e}(2)}\cdots\overline{\msf{S}}^{(n)}_{n,\msf{e}(n)}(X_1X_2\cdots X_n)|_{q=1}\in\bb{Z}[X_1,\ldots,X_n]^{\mf{S}_n}
\end{align*}
where $\mb{X}_{\Gamma}^{(n)}$ is the truncation of $\mb{X}_{\Gamma}$ to a symmetric polynomial of $n$-variable which does not lose any information. 

One feature of this formula is that it is a priori not clear if the RHS is symmetric or not. The proof of this formula given in \cite{Kat24} is geometric and uses an iterated projective space bundle structure on $\mf{X}_{\Gamma}$. The geometric reason behind this symmetry is the existence of $GL_n(\bb{C}[[z]])$-action on $\mf{X}_{\Gamma}$, but each intermediate space does not have this symmetry in general. One of the aims of this paper is to give an algebraic proof of this formula and in particular the symmetricity of the RHS. 

Second feature of Kato's formula is that it is an equality of symmetric \textit{polynomials} instead of symmetric \textit{functions}. We show that this formula actually stabilizes to give an equality of symmetric functions. More precisely, we show that 
\begin{align*}
	\mb{X}_{\Gamma}^{(m)}=\overline{\msf{S}}^{(m)}_{1,\msf{e}(1)}\overline{\msf{S}}^{(m)}_{2,\msf{e}(2)}\cdots\overline{\msf{S}}^{(m)}_{n,\msf{e}(n)}(X_1X_2\cdots X_n)|_{q=1}\in\bb{Z}[X_1,\ldots,X_m]^{\mf{S}_m}
\end{align*}
for any $m\geq 0$, where we understand that $\overline{\msf{S}}^{(m)}_{i,e}=0$ if $e<i-n$ and $X_{m+1}=q^{-1}X_1,X_{m+2}=q^{-1}X_2,\ldots$ if $m<n$. We note that the geometry corresponding to $\mf{X}_{\Gamma}$ for general $m$ is studied by Kamnitzer in \cite{Kam24}.

Third feature of Kato's formula is that the RHS has an additional parameter $q$ which is not related to the parameter $t$ in the chromatic quasisymmetric functions, but comes from the theory of affine Lie algebras. It is hence natural to ask if one can lift Kato's formula to the chromatic quasisymmetric functions by using the affine Hecke algebras of type $A$. This is one of the problems we solve affirmatively in this paper. We note that the appearance of \textit{affine} Hecke algebras here is different from the appearance of \textit{finite} Hecke algebras in the work of Clearman-Hyatt-Shelton-Skandera \cite{CHSS16}, although the meaning of the parameter $t$ in the defining relations of the Hecke algebras is the same. 

Now we explain our first main result in more detail. Let $\msc{H}_{m}$ be the affine Hecke algebra of $GL_m$ defined over $\bb{Q}_{q,t}\coloneqq\bb{Q}(q,t)$. This algebra also has a level one action on $\bb{Q}_{q,t}[X^{\pm1}]_{(m)}$ and the generator $T_{i}\in\msc{H}_{m}$ for $i\in\bb{Z}/m\bb{Z}$ acts by
\begin{align*}
	T_i(F)=ts_i(F)+(t-1)\frac{F-s_i(F)}{1-X_iX_{i+1}^{-1}},
\end{align*}
where the action of $s_i$ is defined as above and we understand that $X_{i+km}=q^{-k}X_{i}$ for any $i=1,\ldots,m$ and $k\in\bb{Z}$. For each $i-m\leq e<i$, we define 
\begin{align*}
	\msf{S}^{(m)}_{i,e}\coloneqq 1+T_{i}^{-1}+T_{i+1}^{-1}T_{i}^{-1}+\cdots +T_{m+e-1}^{-1}T_{m+e-2}^{-1}\cdots T_{i}^{-1}\in\msc{H}_m
\end{align*}
 and set $\msf{S}^{(m)}_{i,e}=0$ if $e<i-m$. 
 
 For $0<m'<m$, we consider the truncation map
 \begin{align*}
 	\pi_{m,m'}:\bb{Q}_{q,t}[X]_{(m)}\rightarrow\bb{Q}_{q,t}[X]_{(m')}
 \end{align*}
 given by $X_{m'+1}=X_{m'+2}=\cdots=X_{m}=0$, where we set
 \begin{align*}
 	\bb{Q}_{q,t}[X]_{(m)}\coloneqq\bb{Q}_{q,t}[X_1,\ldots,X_m].
 \end{align*}
We denote by $\bb{Q}_{q,t}[X]_{(\infty)}$ the projective limit of $\bb{Q}_{q,t}[X]_{(m)}$ with respect to $\pi_{m,m'}$ in the category of graded vector spaces, where the grading of $\bb{Q}_{q,t}[X]_{(m)}$ is given by the degree of polynomials. We note that $\bb{Q}_{q,t}[X]_{(\infty)}$ has a natural ring structure. The subring of symmetric functions with variables $X_1,X_2,\ldots,$ and coefficients in $\bb{Q}_{q,t}$ is denoted by 
 \begin{align*}
 	\Lambda_{q,t}\subset\bb{Q}_{q,t}[X]_{(\infty)}.
 \end{align*}
 
 \begin{main}\label{Main_A}
 	Let $\Gamma$ be a unit interval graph corresponding to $\msf{e}\in\bb{E}_n$.
 	\begin{enumerate}
 		\item We have
 		\begin{align*}
	\mb{X}_{\Gamma}^{(m)}(q,t)\coloneqq t^{n(m-1)}\msf{S}^{(m)}_{1,\msf{e}(1)}\msf{S}^{(m)}_{2,\msf{e}(2)}\cdots\msf{S}^{(m)}_{n,\msf{e}(n)}(X_1X_2\cdots X_n)\in\bb{Z}[q^{-1},t^{\pm1}][X_1,\ldots,X_m]^{\mf{S}_m},
\end{align*}
        where we understand that $X_{i+km}=q^{-k}X_{i}$ for any $1\leq i\leq m$ and $k\in\bb{Z}$.
		\item For any $0<m'<m$, we have
		\begin{align*}
			\pi_{m,m'}\left(\mb{X}_{\Gamma}^{(m)}(q,t)\right)=\mb{X}_{\Gamma}^{(m')}(q,t).
		\end{align*}
		In particular, we obtain a well-defined symmetric function
		\begin{align*}
			\mb{X}_{\Gamma}(q,t)\coloneqq\varprojlim_{m}\mb{X}_{\Gamma}^{(m)}(q,t)\in\Lambda_{q,t}.
		\end{align*}
		\item We have
		\begin{align*}
			\mb{X}_{\Gamma}(1,t)=\mb{N}(\mb{X}_{\Gamma}(t)),
		\end{align*}
		where $\mb{N}$ is an algebra automorphism of $\Lambda_{q,t}$ characterized by 
		\begin{align*}
			\mb{N}(e_r(X))=t^{\frac{r(r-1)}{2}}e_r(X)
		\end{align*}
		for the $r$-th elementary symmetric function $e_r(X)\in\Lambda_{q,t}$.
 	\end{enumerate}
 \end{main}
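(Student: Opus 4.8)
The plan is to work entirely inside the level one polynomial representation of $\msc{H}_m$ and to exploit the elementary criterion that $F\in\bb{Q}_{q,t}[X]_{(m)}$ is $\mf{S}_m$-symmetric if and only if $T_j(F)=tF$ for every \emph{finite} generator $j=1,\ldots,m-1$; this is immediate from the displayed formula for $T_j$, since $T_j(F)=tF$ together with the nonvanishing of $1-X_jX_{j+1}^{-1}$ forces $s_j(F)=F$. Thus for part (i) it suffices to prove that $\msf{S}\coloneqq\msf{S}^{(m)}_{1,\msf{e}(1)}\cdots\msf{S}^{(m)}_{n,\msf{e}(n)}$, applied to the seed monomial $X_1\cdots X_n$, is fixed up to the scalar $t$ by each finite $T_j$. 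Integrality (landing in $\bb{Z}[q^{-1},t^{\pm1}]$) I would treat separately and more cheaply, by checking that each $T_i^{-1}=t^{-1}T_i+(t^{-1}-1)$ has matrix coefficients in $\bb{Z}[q^{-1},t^{\pm1}]$ on the monomial basis, the $q^{-1}$ entering only through the level one twist $X_{i+km}=q^{-k}X_i$ with $k\ge 0$; so it is the symmetry that is the substantive claim.

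For the symmetry I would argue inductively, using the recursion $\msf{S}^{(m)}_{i,e}=1+\msf{S}^{(m)}_{i+1,e}T_i^{-1}$ (immediate from the definition) together with the quadratic relation $(T_i-t)(T_i+1)=0$ and the braid relations. The bare monomial $X_1\cdots X_n$ is already symmetric in $X_j,X_{j+1}$ for every $j\neq n$, so the only asymmetry to be repaired sits at position $n$; the role of the product of $\msf{S}$-operators is to transport and dissolve this asymmetry across all of $\mf{S}_m$, and the monotonicity constraint $\msf{e}(i)\le\msf{e}(i+1)$ defining $\bb{E}_n$ is precisely what makes the index ranges nest correctly so that each $T_j$ can be commuted to the right and absorbed. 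Concretely I expect to establish a commutation lemma expressing $T_j\,\msf{S}^{(m)}_{i,e}$ as $\msf{S}^{(m)}_{i,e}\,T_{j'}$ plus lower correction terms, and then to feed this through the product so that a finite $T_j$ placed on the far left is converted, factor by factor, into an operator acting on $X_1\cdots X_n$ by the scalar $t$. This commutation bookkeeping, and the verification that the correction terms telescope away exactly under the hypothesis $\msf{e}\in\bb{E}_n$, is the main obstacle of the whole theorem; it is the algebraic shadow of the geometric fact that $\mf{X}_\Gamma$ carries a $GL_m(\bb{C}[[z]])$-action even though the intermediate iterated bundles do not.

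For part (ii) I would analyze the effect of the truncation $\pi_{m,m'}$ directly on the operators. Setting $X_{m'+1}=\cdots=X_m=0$ degenerates the Demazure--Lusztig operators with large index, and the claim is that under this specialization the $m$-variable product collapses onto the $m'$-variable product, with the convention $\msf{S}^{(m')}_{i,e}=0$ for $e<i-m'$ absorbing exactly the vanishing contributions. I would make this precise by showing that $\pi_{m,m'}$ intertwines each factor $\msf{S}^{(m)}_{i,\msf{e}(i)}$ with $\msf{S}^{(m')}_{i,\msf{e}(i)}$ on the relevant weight subspace, using compatibility of $\pi_{m,m'}$ with the level one relations $X_{i+km}=q^{-k}X_i$. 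The symmetry from (i) then guarantees that the specialized polynomial is again symmetric, so that $\varprojlim_m\mb{X}^{(m)}_\Gamma(q,t)$ is a well-defined element of $\Lambda_{q,t}$ rather than merely of $\bb{Q}_{q,t}[X]_{(\infty)}$.

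For part (iii) I would specialize $q=1$, where the level one action becomes genuinely $m$-periodic ($X_{i+m}=X_i$) and the inverse operators $T_i^{-1}$ reduce to operators that build up chromatic quasisymmetric functions in the sense of Shareshian--Wachs. The strategy is to match our expression with the known ascent-statistic expansion of $\mb{X}_\Gamma(t)$, each inverse Demazure--Lusztig step contributing a factor of $t$ weighted by an ascent, and to isolate the overall discrepancy as a single scalar twist coming from the normalization $t^{n(m-1)}$ and from the $e_n$-type seed $X_1\cdots X_n$. Identifying this twist with the automorphism $\mb{N}$, $e_r\mapsto t^{\binom r2}e_r$, is the remaining point to be checked; once it is, Kato's formula is recovered as the case $t=1$ (where $\mb{N}$ is the identity and $\mb{X}_\Gamma(1)=\mb{X}_\Gamma$), and its symmetry follows from part (i) as a corollary.
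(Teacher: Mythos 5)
Your plan for part (i) is essentially the paper's own: the symmetry is proved there by a commutation lemma (Lemma~\ref{Lem_Sinv}) of exactly the kind you describe, pushing each finite $T_j^{-1}$ through a factor $\widehat{\msf{S}}^{(m)}_a=\msf{S}^{(m)}_{1,a-m+1}\Pi$ via the braid and quadratic relations, followed by an induction (Lemma~\ref{Lem_Sesym}) that tracks, through an auxiliary nondecreasing sequence $k_j$ built from the area sequence, precisely which $s_i$-invariances each intermediate polynomial enjoys; the monotonicity $\msf{e}(i)\leq\msf{e}(i+1)$ enters exactly where you predict. The bookkeeping you defer is the real content, but the idea is right, and your remark on integrality via the matrix coefficients of $T_i^{-1}$ and $\Pi$ on monomials is fine.

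Parts (ii) and (iii) are where your route diverges and where the gaps lie. For (ii), the truncation $\pi_{m,m-1}$ does \emph{not} intertwine the individual operators: by Lemma~\ref{Lem_AHA_action}, $T_{m-1}^{-1}$ applied to $X_{m-1}^k$ (a polynomial free of $X_m$) produces $t^{-1}X_m^k$ plus terms divisible by $X_{m-1}$, so setting $X_m=0$ before or after applying $T_{m-1}^{-1}$ gives different answers, and a factor-by-factor collapse of $\msf{S}^{(m)}_{i,e}$ onto $\msf{S}^{(m')}_{i,e}$ cannot go through; moreover the intermediate polynomials are not symmetric, so you cannot invoke (i) midway. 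The paper circumvents this entirely: it proves that $\msf{e}\mapsto\msf{S}^{(m)}_{\msf{e}}\bullet F$ satisfies the Abreu--Nigro modular law (Theorem~\ref{Thm_modular_law}, resting on the nontrivial Hecke identities of Lemmas~\ref{Lem_modI} and~\ref{Lem_modII}), which by Remark~\ref{Rem_AN} reduces the stability claim to the concatenations of complete graphs $\msf{e}_\mu$; these are computed explicitly (Corollary~\ref{Cor_complete}), and the multiplicativity via the quantum product together with the stability of the $\msf{q}$-map (Proposition~\ref{Prop_stability}, itself a careful divisibility argument for the $Y_i$-action, not for individual $T_j$'s) finishes the job. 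Your proposal contains no substitute for the modular law, and without it I do not see how (ii) or (iii) closes. For (iii) the paper again uses the modular-law characterization to identify $\msf{q}^{-1}(\mb{X}_\Gamma(q,t))$ with $\mb{Y}_\Gamma(t)$ and then sets $q=1$; your plan to match the ascent-statistic expansion step by step would amount to an independent combinatorial reproof of Kato's formula, and your description of the discrepancy as ``a single scalar twist'' is inaccurate: $\mb{N}$ rescales $e_\lambda$ by $t^{\sum_i\lambda_i(\lambda_i-1)/2}$, a partition-dependent factor, which is why the paper must first establish $\msf{q}(e_\lambda(Y))=t^{\sum_i\lambda_i(\lambda_i-1)/2}e^{(q,t)}_\lambda(X)$ (Lemma~\ref{Lem_iota_elem}) before the identification with $\mb{N}$ can be made.
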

 
We call $\mb{X}_{\Gamma}(q,t)$ here $(q,t)$-\textit{chromatic symmetric function} for unit interval graph $\Gamma$. This is a $(q,t)$-analogue of the chromatic symmetric function $\mb{X}_{\Gamma}$ and a $q$-analogue of the slightly modified chromatic quasisymmetric function $\mb{N}(\mb{X}_{\Gamma}(t))$. We note that concerning the elementary symmetric function expansion ($e$-expansion for short) of the chromatic quasisymmetric function of a unit interval graph, this modification is harmless.

Now we study several properties of $\mb{X}_{\Gamma}(q,t)$. It is natural to ask if the known properties of $\mb{X}_{\Gamma}(t)$ still hold or not for $\mb{X}_{\Gamma}(q,t)$. For example, the famous Stanley-Stembridge conjecture \cite{Sta95,SS93} (proved in \cite{Hik24}) and its refinement due to Shareshian-Wachs \cite{SW12,SW16} (still open) imply that $\mb{X}_{\Gamma}(t)$ should be $e$-positive for any unit interval graph $\Gamma$. However, this positivity does not literally hold for $\mb{X}_{\Gamma}(q,t)$ as one can see easily by calculating some examples. We remedy this situation by also introducing a $(q,t)$-analogue of the elementary symmetric functions. Interestingly (or not), we show that the coefficients of the expansion of $\mb{X}_{\Gamma}(q,t)$ in terms of the $(q,t)$-elementary symmetric functions are essentially the same as the $e$-expansion coefficients of $\mb{X}_{\Gamma}(t)$. In particular, the coefficients are independent of the parameter $q$.

We next consider the multiplicativity of $\mb{X}_{\Gamma}(q,t)$. It is well-known that the chromatic quasisymmetric functions satisfy 
\begin{align*}
	\mb{X}_{\Gamma\cup\Gamma'}(t)=\mb{X}_{\Gamma}(t)\mb{X}_{\Gamma'}(t),
\end{align*}
where $\Gamma\cup\Gamma'$ is the ordered disjoint union of two unit interval graphs $\Gamma$ and $\Gamma'$. It turns out that this kind of multiplicativity does not hold for $\mb{X}_{\Gamma}(q,t)$ under the usual multiplication of symmetric functions. However, we show that there exists unique deformation $\star:\Lambda_{q,t}\times\Lambda_{q,t}\rightarrow\Lambda_{q,t}$ of the multiplication on $\Lambda_{q,t}$ such that we have
\begin{align*}
	\mb{X}_{\Gamma\cup\Gamma'}(q,t)=\mb{X}_{\Gamma}(q,t)\star\mb{X}_{\Gamma'}(q,t).
\end{align*}
for any unit interval graphs $\Gamma$ and $\Gamma'$. This multiplication gives a commutative associative algebra structure with unit $1\in\Lambda_{q,t}$ and reduces to the usual multiplication at $q=1$. We call this \textit{quantum multiplication}, in view of its similarity to the theory of quantum cohomology.

Let us explain how to define the quantum multiplication. Since there exists an element $\msf{S}_{\Gamma}^{(m)}\in\msc{H}_{m}$ such that 
\begin{align*}
	\mb{X}_{\Gamma}^{(m)}(q,t)=\msf{S}_{\Gamma}^{(m)}(1),
\end{align*}
it is natural to assume that the quantum multiplication by a symmetric polynomial on $\bb{Q}_{q,t}[X]_{(m)}$ commutes with the $\msc{H}_m$-action. In fact, this commutativity and 
\begin{align*}
	\msf{S}_{\Gamma\cup\Gamma'}^{(m)}=\msf{S}_{\Gamma}^{(m)}\msf{S}_{\Gamma'}^{(m)}
\end{align*}
imply that 
\begin{align*}
	\mb{X}_{\Gamma\cup\Gamma'}^{(m)}(q,t)=\msf{S}_{\Gamma}^{(m)}(\mb{X}_{\Gamma'}(q,t))=\msf{S}_{\Gamma}^{(m)}(1\star\mb{X}_{\Gamma'}^{(m)}(q,t))=\mb{X}_{\Gamma}^{(m)}(q,t)\star\mb{X}_{\Gamma'}^{(m)}(q,t).
\end{align*}
In order to obtain operators commuting with the $\msc{H}_{m}$-action, it is natural to consider the action of the center of $\msc{H}_{m}$. Recall that by the Bernstein presentation of the affine Hecke algebras (see for example \cite{Lus89}), we have a polynomial subalgebra 
\begin{align*}
	\bb{Q}_{q,t}[Y^{\pm1}]_{(m)}\subset\msc{H}_{m}
\end{align*}
and the center is equal to $\bb{Q}_{q,t}[Y^{\pm1}]_{(m)}^{\mf{S}_m}$ (see (\ref{Eqn_Y_i}) for our convention on $Y_i$). Therefore, we try to find an isomorphism 
\begin{align*}
	\msf{q}_{(m)}:\bb{Q}_{q,t}[Y^{\pm1}]_{(m)}\rightarrow\bb{Q}_{q,t}[X^{\pm1}]_{(m)}
\end{align*}
as vector spaces over $\bb{Q}_{q,t}$ preserving the polynomial parts such that
\begin{align*}
	F(X)\star G(X)=\msf{q}_{(m)}^{-1}(F)\bullet G(X)
\end{align*}
for any $F(X),G(X)\in\bb{Q}_{q,t}[X^{\pm1}]_{(m)}$, where we write the $\msc{H}_m$-action by $\bullet$ in order to emphasize it. Since $1\in\bb{Q}_{q,t}[X^{\pm1}]_{(m)}$ should be a unit, we should have
\begin{align*}
	\msf{q}_{(m)}(F(Y))=F(Y)\bullet 1.
\end{align*}
for any $F(Y)\in\bb{Q}_{q,t}[Y^{\pm1}]_{(m)}$. By reversing the argument, we define $\msf{q}_{(m)}$ by this formula and then define quantum multiplication $\star$ on $\bb{Q}_{q,t}[X^{\pm1}]_{(m)}$ as above. We note that it is crucial to use the level one action of $\msc{H}_m$ here in order for $\msf{q}_{(m)}$ to be an isomorphism.

We now summarize basic properties of the map $\msf{q}_{(m)}$ and the quantum multiplication as our second main result of this paper. 

\begin{main}\label{Main_B}\leavevmode
\begin{enumerate}
	\item The map $\msf{q}_{(m)}$ is an isomorphism of $\bb{Q}_{q,t}$-vector spaces and satisfies
	\begin{align*}
		\msf{q}_{(m')}(\pi_{m,m'}(F(Y)))=\pi_{m,m'}(\msf{q}_{(m)}(F(Y)))
	\end{align*}
	for any $0<m'<m$ and $F(Y)\in\bb{Q}_{q,t}[Y]_{(m)}$. In particular, we obtain a well-defined linear isomorphism
	\begin{align*}
		\msf{q}:\bb{Q}_{q,t}[Y]_{(\infty)}\xrightarrow{\sim}\bb{Q}_{q,t}[X]_{(\infty)}.
	\end{align*}
	\item We define a commutative associative multiplication $\star$ on $\bb{Q}_{q,t}[X]_{(\infty)}$ by
	\begin{align*}
		F(X)\star G(X)\coloneqq\msf{q}\left(\msf{q}^{-1}(F)\cdot\msf{q}^{-1}(G)\right),
	\end{align*}
	where $\cdot$ is the usual multiplication on $\bb{Q}_{q,t}[Y]_{(\infty)}$. This multiplication preserves the subspace $\Lambda_{q,t}\subset\bb{Q}_{q,t}[X]_{(\infty)}$ and reduces to the usual multiplication at $q=1$ if one of $F$ or $G$ is in $\Lambda_{q,t}$.
	\item For any unit interval graph $\Gamma$, we have
	\begin{align*}
	\mb{X}_{\Gamma}(q,t)=\msf{q}\left(\mb{Y}_{\Gamma}(t)\right),
	\end{align*}
	where $\mb{Y}_{\Gamma}(t)\in\bb{Q}_{q,t}[Y]_{(\infty)}$ is the chromatic quasisymmetric function of $\Gamma$ written in the $Y$-variables. 
	\item For any partition $\lambda=(\lambda_1,\ldots,\lambda_l)$, we have
	\begin{align*}
		\msf{q}(e_{\lambda}(Y))=t^{\sum_i\lambda_i(\lambda_i-1)/2}e^{(q,t)}_{\lambda}(X),
	\end{align*}
	where we define the $(q,t)$-\textit{elementary symmetric function} $e^{(q,t)}_{\lambda}(X)\in\Lambda_{q,t}$ by
	\begin{align*}
		e^{(q,t)}_{\lambda}(X)\coloneqq e_{\lambda_1}(X)\star\cdots\star e_{\lambda_l}(X).
	\end{align*}
\end{enumerate}
\end{main}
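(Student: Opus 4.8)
The plan is to establish the four parts in the order (i), (ii), (iv), (iii): the well-definedness of $\msf{q}$ underlies the definition of $\star$, the ring structure behind $\star$ is what reduces (iv) to a single-part computation, and (iv) in turn feeds the expansion needed for (iii). For (i) I would use that the level one module $\bb{Q}_{q,t}[X^{\pm1}]_{(m)}$ is the induced representation $\mathrm{Ind}_{H_{\mathrm{fin}}}^{\msc{H}_m}(\chi_t)$, where $H_{\mathrm{fin}}$ is the finite Hecke subalgebra and $\chi_t\colon T_i\mapsto t$, with cyclic vector $1$ (indeed $T_i\bullet 1=t$ since $s_i(1)=1$). By the Bernstein presentation $\msc{H}_m=\bigoplus_{w\in\mf{S}_m}\bb{Q}_{q,t}[Y^{\pm1}]_{(m)}T_w$ is free over $\bb{Q}_{q,t}[Y^{\pm1}]_{(m)}$, and $T_w\bullet 1=t^{\ell(w)}$, so this induced module is free of rank one over $\bb{Q}_{q,t}[Y^{\pm1}]_{(m)}$ with generator $1$; hence $\msf{q}_{(m)}\colon F(Y)\mapsto F(Y)\bullet 1$ is a linear isomorphism of the Laurent spaces. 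To descend to the polynomial parts I would prove the triangularity $\msf{q}_{(m)}(Y^{\mu})=c_{\mu}X^{\mu}+(\text{lower terms})$ with $c_{\mu}\neq 0$ by reading off the leading behaviour of the $Y_i$ acting on $1$ from the explicit level one formulas, which is where the precise shape of the action is used. The compatibility $\msf{q}_{(m')}\circ\pi_{m,m'}=\pi_{m,m'}\circ\msf{q}_{(m)}$ I would get by checking that the specialization $X_{m'+1}=\cdots=X_m=0$ intertwines the $\msc{H}_m$-action with the $\msc{H}_{m'}$-action on the quotient.

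Part (ii) has a formal half and a computational half. Commutativity and associativity of $\star$ are immediate, since $\star$ is by definition the transport along the bijection $\msf{q}$ of the commutative associative product on $\bb{Q}_{q,t}[Y]_{(\infty)}$. That $\star$ preserves $\Lambda_{q,t}$ reduces to showing $\msf{q}$ restricts to an isomorphism from the symmetric functions $\Lambda_{q,t}^{Y}\subset\bb{Q}_{q,t}[Y]_{(\infty)}$ onto $\Lambda_{q,t}$: if $F(Y)$ is symmetric it is central in $\msc{H}_m$, so $T_i(F(Y)\bullet 1)=F(Y)\bullet(T_i\bullet 1)=t\,(F(Y)\bullet 1)$ for every finite $i$, and since $T_i(H)=tH$ forces $s_i(H)=H$ the image is $\mf{S}_m$-symmetric; surjectivity then follows from the triangularity of (i). For the reduction to the usual product at $q=1$ I would show that at $q=1$ (where $X_{i+km}=X_i$) each central operator $e_r(Y)$ acts on all of $\bb{Q}_{1,t}[X^{\pm1}]_{(m)}$ as multiplication by $t^{\binom{r}{2}}e_r(X)$; combined with the value on $1$ from (iv) this yields $F\star G|_{q=1}=(FG)|_{q=1}$ whenever $F$ is symmetric, because $\msf{q}^{-1}(F)$ is then a polynomial in the $e_r(Y)$ acting by multiplication.

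For (iv) the ring-isomorphism property from (ii) gives $\msf{q}(e_{\lambda}(Y))=\msf{q}(e_{\lambda_1}(Y))\star\cdots\star\msf{q}(e_{\lambda_l}(Y))$, so the statement collapses to the single-part identity $\msf{q}(e_r(Y))=t^{\binom{r}{2}}e_r(X)$. I would prove this by computing $e_r(Y)\bullet 1$ outright from the Bernstein presentation and the explicit action: centrality makes the output $\mf{S}_m$-symmetric as in (ii), the triangularity of (i) and a degree count identify it with a scalar multiple of $e_r(X)$, and the scalar $t^{\binom{r}{2}}$ is then pinned down by induction on $r$ (and on $m$), carefully tracking the powers of $t$ produced by the relations $T_i\bullet 1=t$ when the $Y_i$ are written in Bernstein form.

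Finally, for (iii) I would start from $\mb{X}_{\Gamma}^{(m)}(q,t)=\msf{S}_{\Gamma}^{(m)}\bullet 1$ (Theorem A(i)). Writing $\msf{S}_{\Gamma}^{(m)}=\sum_{w\in\mf{S}_m}p_w(Y)T_w$ in Bernstein form and using $T_w\bullet 1=t^{\ell(w)}$ gives $\msf{q}^{-1}(\mb{X}_{\Gamma}(q,t))=\sum_{w}t^{\ell(w)}p_w(Y)$, so (iii) becomes the purely algebraic identity that this $Y$-projection of $\msf{S}_{\Gamma}^{(m)}$ equals $\mb{Y}_{\Gamma}(t)$. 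I expect this to be the main obstacle: it requires converting the product $\msf{S}^{(m)}_{1,\msf{e}(1)}\cdots\msf{S}^{(m)}_{n,\msf{e}(n)}$ of the $T^{-1}$-built operators into Bernstein form and recognizing the resulting symmetric function as the chromatic quasisymmetric function. To control the computation I would pass, via (iv), to the $e$-basis and match coefficients with $e^{(q,t)}_{\lambda}(X)$, use the multiplicativity $\mb{X}_{\Gamma\cup\Gamma'}(q,t)=\mb{X}_{\Gamma}(q,t)\star\mb{X}_{\Gamma'}(q,t)$, and anchor the $q$-dependence with the specialization $\mb{X}_{\Gamma}(1,t)=\mb{N}(\mb{X}_{\Gamma}(t))$ of Theorem A(iii) together with the identity $\msf{q}|_{q=1}=\mb{N}$ on symmetric functions that falls out of (iv); the real content is that the matching holds uniformly in $q$.
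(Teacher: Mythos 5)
Your treatments of (i), (ii) and (iv) are broadly viable, though they diverge from the paper's and gloss over two points where the paper does real work. For (i), identifying the level one polynomial module with $\mathrm{Ind}_{H_{\mathrm{fin}}}^{\msc{H}_m}(\chi_t)$ is a reasonable alternative to the paper's argument (which compares graded dimensions and checks surjectivity by specializing $q=t=1$, where each $Y_i$ acts as multiplication by $X_i$), but it still requires proving that $1$ is a cyclic vector, which is essentially the surjectivity you are trying to establish. More seriously, the compatibility $\msf{q}_{(m')}\circ\pi_{m,m'}=\pi_{m,m'}\circ\msf{q}_{(m)}$ is not a routine ``check'': the truncation does not intertwine the two affine Hecke actions in any naive sense (neither $\Pi$ nor $T_{m-1}$ descends), and the paper proves the needed statement (Proposition~\ref{Prop_stability}) only for operators of the form $F(Y)\bullet(-)$ with $F$ a polynomial in the $Y_i$, via a careful divisibility analysis (Lemma~\ref{Lem_divisibility}). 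In (iv), a degree count does not identify a symmetric polynomial of degree $r$ as a scalar multiple of $e_r(X)$ once $r\geq 2$; you need either a genuine triangularity with respect to dominance order or, as the paper does, the direct computation $\prod_{i\in I}Y_i\bullet 1=t^{|I|(|I|-1)/2}\prod_{i\in I}X_i$ by induction on $|I|$ (Lemma~\ref{Lem_iota_elem}).

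The genuine gap is in (iii). The paper's proof rests on one structural idea that your proposal never produces: the map $\msf{e}\mapsto\msf{S}^{(m)}_{\msf{e}}\bullet F(X)$ satisfies the Abreu--Nigro modular law (Theorem~\ref{Thm_modular_law}), and the modular law together with multiplicativity and the complete-graph values $t^{n(n-1)/2}[n]_t!\,e_n$ characterizes the chromatic quasisymmetric function uniquely (Theorem~\ref{Thm_AN}). Your plan is instead to expand $\msf{S}^{(m)}_{\Gamma}$ in Bernstein form and recognize $\sum_w t^{\ell(w)}p_w(Y)$ as $\mb{Y}_\Gamma(t)$, which you correctly flag as the main obstacle but for which you offer no workable mechanism. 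The anchors you propose do not close it: Theorem~\ref{Main_A}(iii) is itself deduced in the paper from Theorem~\ref{Main_B}(iii) by setting $q=1$, so invoking it here is circular; and even granting it independently, the $q=1$ specialization plus multiplicativity cannot determine the $e^{(q,t)}_{\lambda}$-expansion for all $q$ unless you already know the coefficients are independent of $q$ --- which is precisely what has to be proved. Note also that the stability needed to make $\mb{X}_{\Gamma}(q,t)$ well defined (Theorem~\ref{Thm_stability}), which you use implicitly throughout, is likewise established in the paper via the modular law. Without the modular law, or an equivalent inductive characterization of $\mb{Y}_{\Gamma}(t)$, your argument for (iii) does not go through.
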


We note that the multiplicativity of $\mb{X}_{\Gamma}(q,t)$ is a consequence of the third assertion and the multiplicativity of $\mb{Y}_{\Gamma}(t)$. We also remark that the RHS of the equality in the third assertion makes sense for any oriented graph $\Gamma$ by the work of Ellzey \cite{Ell17}. Therefore, one might be able to consider this formula as a definition of $(q,t)$-chromatic (quasi)symmetric function for any oriented graphs.

Finally, we try to seek for an application of the $(q,t)$-chromatic symmetric functions. Let us write $e$-expansion of $\mb{Y}_{\Gamma}(t)$ as
\begin{align*}
	\mb{Y}_{\Gamma}(t)=\sum_{\lambda\vdash n}c_{\lambda}(\Gamma;t)e_{\lambda}(Y).
\end{align*}
The above theorem implies that we have
\begin{align*}
	\mb{X}_{\Gamma}(q,t)=\sum_{\lambda\vdash n}t^{\sum_i\lambda_i(\lambda_i-1)/2}c_{\lambda}(\Gamma;t)e^{(q,t)}_{\lambda}(X).
\end{align*}
In particular, we may obtain linear relations between the coefficients $c_{\lambda}(\Gamma;t)$ by specializing $q$ to some values other than $q=1$ in $\mb{X}_{\Gamma}(q,t)$ and $e^{(q,t)}_{\lambda}(X)$. We show that everything can be made explicit at $q=\infty$. The results can be summarized as follows.

\begin{main}\label{Main_C}\leavevmode
\begin{enumerate}
	\item For any unit interval graph $\Gamma$ corresponding to $\msf{e}\in\bb{E}_n$, we have
	\begin{align*}
		\lim_{q\rightarrow\infty}\mb{X}_{\Gamma}(q,t)=t^{n(n-1)/2-|\msf{e}|}[n]_t!e_n(X),
	\end{align*}
	where we write $|\msf{e}|=\msf{e}(1)+\cdots+\msf{e}(n)$.
	\item For any partition $\lambda=(\lambda_1,\ldots,\lambda_l)$, we have
	\begin{align*}
		\lim_{q\rightarrow\infty}e^{(q,t)}_{\lambda}(X)=\frac{[n]_t!}{\prod_{i=1}^{l}[\lambda_i]_t!}e_n(X)
	\end{align*}
	\item We obtain
	\begin{align*}
		\sum_{\lambda\vdash n}t^{|\msf{e}|-|\msf{e}_{\lambda}|}\frac{c_{\lambda}(\Gamma;t)}{\prod_{i=1}^{l}[\lambda_i]_t!}=1,
	\end{align*}
	where we write $|\msf{e}_{\lambda}|=\sum_{i<j}\lambda_i\lambda_j$.
\end{enumerate}
\end{main}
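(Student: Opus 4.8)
The plan is to establish the three assertions in the order (1), (2), (3), since (1) is the analytic heart, (2) is a corollary of (1) for a distinguished family of graphs, and (3) is purely formal. Record first the elementary identity $\sum_i\binom{\lambda_i}{2}=\binom n2-|\msf e_\lambda|$, which is immediate from $(\sum_i\lambda_i)^2=\sum_i\lambda_i^2+2\sum_{i<j}\lambda_i\lambda_j$. Granting (1) and (2), assertion (3) follows by applying $\lim_{q\to\infty}$ to the $e$-expansion $\mb X_\Gamma(q,t)=\sum_{\lambda\vdash n}t^{\sum_i\binom{\lambda_i}{2}}c_\lambda(\Gamma;t)\,e^{(q,t)}_\lambda(X)$: by (1) the left-hand side tends to $t^{\binom n2-|\msf e|}[n]_t!\,e_n(X)$, by (2) each $e^{(q,t)}_\lambda(X)$ tends to $\tfrac{[n]_t!}{\prod_i[\lambda_i]_t!}e_n(X)$, and after dividing both sides by $[n]_t!\,e_n(X)$ and collecting the powers of $t$ one obtains exactly the stated relation.

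To reduce (2) to (1), fix $\lambda=(\lambda_1,\ldots,\lambda_l)$ and let $\Gamma_\lambda=K_{\lambda_1}\sqcup\cdots\sqcup K_{\lambda_l}$ be the ordered disjoint union of complete graphs. Inspecting its $\msf e$-sequence gives $\msf e(i)=\lambda_1+\cdots+\lambda_{j-1}$ for $i$ in the $j$-th block, so that $|\msf e|=\sum_{k<j}\lambda_k\lambda_j=|\msf e_\lambda|$. Since $\mb X_{K_r}(t)=[r]_t!\,e_r$, multiplicativity of the chromatic quasisymmetric function gives $\mb Y_{\Gamma_\lambda}(t)=\big(\prod_i[\lambda_i]_t!\big)e_\lambda(Y)$, whence Theorem~\ref{Main_B}(3),(4) yield $\mb X_{\Gamma_\lambda}(q,t)=\big(\prod_i[\lambda_i]_t!\big)t^{\sum_i\binom{\lambda_i}{2}}e^{(q,t)}_\lambda(X)$. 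On the other hand (1) for $\Gamma_\lambda$ reads $\lim_{q\to\infty}\mb X_{\Gamma_\lambda}(q,t)=t^{\binom n2-|\msf e_\lambda|}[n]_t!\,e_n(X)=t^{\sum_i\binom{\lambda_i}{2}}[n]_t!\,e_n(X)$; cancelling $t^{\sum_i\binom{\lambda_i}{2}}$ and dividing by $\prod_i[\lambda_i]_t!$ is precisely (2). Thus the entire theorem rests on (1).

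For (1) I would compute the $q\to\infty$ limit of the symmetric function $\mb X_\Gamma(q,t)=\varprojlim_m\mb X_\Gamma^{(m)}(q,t)$ of Theorem~\ref{Main_A} by analysing the finite presentation $\mb X_\Gamma^{(m)}(q,t)=t^{n(m-1)}\msf S^{(m)}_{1,\msf e(1)}\cdots\msf S^{(m)}_{n,\msf e(n)}(X_1\cdots X_n)$ for $m\gg0$. Writing $T_j^{-1}=t^{-1}T_j-(1-t^{-1})$ and using the explicit level-one action, I would split each factor $\msf S^{(m)}_{i,\msf e(i)}$ into its non-wrapping summands, built only from the finite generators $T_1,\ldots,T_{m-1}$ and hence independent of $q$, and its wrapping summands, which pass through the affine generator $T_0$ and carry all the $q$-dependence via $X_{m+1}=q^{-1}X_1$; the latter occur exactly when $\msf e(i)\ge1$. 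The expectation is that in the limit the non-wrapping parts assemble into a Hecke symmetrizer turning the monomial $X_1\cdots X_n$ into a multiple of $e_n(X)$, with the $t$-factorial $[n]_t!$ arising as the weighted size of the relevant symmetric group, while the wrapping parts together with the prefactor $t^{n(m-1)}$ supply the remaining power of $t$, collapse to the scalar $t^{\binom n2-|\msf e|}$, and cancel the $m$-dependence. The main obstacle is exactly this last point: one must control the leading $q$-order of each reduced word $T_{m+\msf e(i)-1}^{-1}\cdots T_i^{-1}$ applied to $X_1\cdots X_n$ uniformly in $m$, showing that every $e_\mu$-component with $\mu\ne(n)$ is suppressed as $q\to\infty$, that no spurious lower powers of $t$ survive, and that the growth in the number of wrapping terms is matched termwise by $t^{n(m-1)}$. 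I expect the bookkeeping to be organised by the step at which the affine reflection first enters, with the combinatorics of these leading terms reproducing both the factor $[n]_t!$ and the exponent $\binom n2-|\msf e|$.
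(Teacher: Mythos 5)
Your reductions of (2) and (3) to (1) are correct as far as they go, but the proof of (1) itself is not a proof: it is a plan whose central difficulty you explicitly leave open (``one must control the leading $q$-order of each reduced word \ldots uniformly in $m$, showing that every $e_\mu$-component with $\mu\ne(n)$ is suppressed''). That uniform control of the wrapping terms is exactly the hard part, and nothing in your sketch resolves it, so the entire theorem is left hanging on an unestablished base case. Moreover, you have inverted the logical order the paper uses: the paper proves (2) \emph{first} and \emph{independently} of (1), directly from the Pieri rule $e_1(X)\star e_r(X)=(1-q^{-1})[r+1]_te_{r+1}(X)+q^{-1}e_1(X)e_r(X)$ (Theorem~\ref{Thm_qtPieri_en}), which at $q=\infty$ gives $\lim_{q\to\infty}e_1(X)^{\star r}=e_r(X)/[r]_t!$ and hence assertion (2) with no reference to any graph.

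The missing idea for (1) is the modular law. Since each $\mb{X}_{\Gamma_{\msf e}}(q,t)$ satisfies the modular law in $\msf e$ (Theorem~\ref{Thm_modular_law}) and the law survives the limit $q\to\infty$, while the proposed right-hand side $t^{n(n-1)/2-|\msf e|}[n]_t!\,e_n(X)$ also satisfies it (because $|\msf e'|=|\msf e|+1$ and $|\msf e''|=|\msf e|-1$ in both cases of Lemma~\ref{Lem_emodular}, so $(1+t)t^{-|\msf e|}=t\cdot t^{-|\msf e|-1}+t^{-|\msf e|+1}$), Abreu--Nigro's uniqueness statement (Remark~\ref{Rem_AN}) reduces (1) to the case $\msf e=\msf e_\mu$. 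There one uses multiplicativity (Corollary~\ref{Cor_fact}), the complete-graph computation $\mb{X}_{\Gamma_{\msf e_n}}(q,t)=t^{n(n-1)/2}[n]_t!\,e_n(X)$ (Corollary~\ref{Cor_complete}), and assertion (2) to evaluate the limit explicitly. This bypasses entirely the operator-level asymptotics you were attempting. If you want to salvage your architecture, you would at minimum need to carry out that asymptotic analysis in full; as written, it is a conjecture, not an argument.
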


We note that the second assertion is a consequence of the following Pieri type formula
\begin{align*}
	e_1(X)\star e_r(X)=(1-q^{-1})[r+1]_te_{r+1}(X)+q^{-1}e_1(X)e_r(X)
\end{align*}
which might be of independent interest. It seems likely that similar Pieri type formula exists for more general quantum multiplication of $e_r(X)$ and Schur functions, but we do not pursue this direction here. 

The third assertion above suggests a probability theoretic approach to study the coefficients $c_{\lambda}(\Gamma;t)$. In fact, this formula was the starting point of our proof of the Stanley-Stembridge conjecture in \cite{Hik24}.

\subsection*{Acknowledgements}

The author thanks Syu Kato for discussions and comments. 

\section{Preliminaries}

\subsection{Notations}

For each $n\in\bb{Z}$, we define $t$-integers $[n]_t$ and $t$-factorial $[n]_t!$ by 
\begin{align*}
	[n]_t\coloneqq\frac{1-t^{n}}{1-t},\hspace{1em} [n]_t!\coloneqq\prod_{i=1}^n[i]_t.
\end{align*}

We set $\bb{Q}_{q,t}\coloneqq\bb{Q}(q,t)$. For $m\in\bb{Z}_{>0}$, we write
\begin{align*}
	\bb{Q}_{q,t}[X]_{(m)}\coloneqq\bb{Q}_{q,t}[X_1,\ldots,X_m]\subset\bb{Q}_{q,t}[X^{\pm1}]_{(m)}\coloneqq\bb{Q}_{q,t}[X_1^{\pm1},\ldots,X_m^{\pm1}].
\end{align*}
In $\bb{Q}_{q,t}[X]_{(m)}$, we understand that 
\begin{align*}
	X_{i+km}=q^{-k}X_{i}
\end{align*}
for any $i=1,\ldots,m$ and $k\in\bb{Z}$. For $0<m'<m$, we denote by 
\begin{align*}
	\pi_{m,m'}:\bb{Q}_{q,t}[X]_{(m)}\rightarrow\bb{Q}_{q,t}[X]_{(m')}
\end{align*}
the truncation map given by substituting $X_{i}=0$ for $m'<i\leq m$. We consider the grading on $\bb{Q}_{q,t}[X]_{(m)}$ given by the degree of the polynomials and denote by $\bb{Q}_{q,t}[X]_{(m),d}$ the subspace spanned by homogeneous polynomials of degree $d$. We define the space of ``polynomial functions'' by
\begin{align*}
	\bb{Q}_{q,t}[X]_{(\infty)}\coloneqq\bigoplus_{d\geq0}\varprojlim_{m}\bb{Q}_{q,t}[X]_{(m),d},
\end{align*}
where the projective limit is taken with respect to $\pi_{m,m'}$. The ring of symmetric functions (\cite{Mac95}) with coefficients in $\bb{Q}_{q,t}$ is defined by
\begin{align*}
	\Lambda_{q,t}\coloneqq\bigoplus_{d\geq0}\varprojlim_{m}\bb{Q}_{q,t}[X]_{(m),d}^{\mf{S}_m}\subset\bb{Q}_{q,t}[X]_{(\infty)}.
\end{align*}
Let $\Lambda_{q,t}^{(m)}\coloneqq\bb{Q}_{q,t}[X]_{(m)}^{\mf{S}_m}$ be the ring of symmetric polynomials of $m$-variables. We denote by $e_r(X_1,\ldots,X_m)\in \Lambda_{q,t}^{(m)}$ the $r$-th elementary symmetric polynomial and $e_r(X)\in\Lambda_{q,t}$ the $r$-th elementary symmetric function. For a partition $\lambda = ( \lambda_1,\ldots,\lambda_l )$, we set
\begin{align*}
	e_{\lambda}(X)\coloneqq \prod_{i=1}^l e_{\lambda_i}(X) \in \Lambda_{q,t}.
\end{align*}

When we specialize some parameter $q$ at $q=a$ on $\bb{Q}_{q,t}[X^{\pm1}]_{(m)}$ for example, we implicitly consider the natural subalgebra $\bb{Q}(t)[q][X^{\pm1}]_{(m)}$ (or $\bb{Q}(t)[q^{-1}][X^{\pm1}]_{(m)}$ if $a=\infty$) and take the quotient by the ideal generated by $(q-a)$ (or $q^{-1}$ if $a=\infty$). 

\subsection{Unit interval graphs}

We briefly recall the notion of unit interval graphs. Let $\bb{A}_n$ be the set of area sequences (see \cite{AP18}) of unit interval graphs with $n$ vertices, i.e., 
\begin{align*}
	\bb{A}_n\coloneqq\left\{\msf{a}:[n]\rightarrow\bb{Z}\middle| 0\leq \msf{a}(i)<i, \msf{a}(i+1)\leq \msf{a}(i)+1\right\},
\end{align*}
where we set $[n]\coloneqq\{1,\ldots,n\}$. For $\msf{a}\in\bb{A}_n$, the corresponding unit interval graph $\Gamma_{\msf{a}}$ has vertices $[n]$ and oriented edges
\begin{align*}
	i-\msf{a}(i)\rightarrow i,i-\msf{a}(i)+1\rightarrow i,\ldots,i-1\rightarrow i
\end{align*}
for $i\in[n]$. 

There are many other sets which can be used to parametrize unit interval graphs. For example, the set of Hessenberg functions $\bb{H}_n$ given by 
\begin{align*}
	\bb{H}_n\coloneqq\left\{\msf{h}:[n]\rightarrow[n]\middle| \msf{h} (i)\geq i,\msf{h}(i)\leq \msf{h}(i+1)\right\}
\end{align*}
corresponds bijectively\footnote{By combining $\msf{a}\mapsto\Gamma_{\msf{a}}$, $\msf{h} \in\bb{H}_n$ also corresponds to unit interval graphs. This correspondence is transpose to the correspondence used in \cite{AN21}, but the chromatic quasisymmetric functions are the same.} to $\bb{A}_n$ by $\msf{h}(i)=i+\msf{a}(n+1-i)$.


We will also use another set $\bb{E}_n$ of sequence of integers defined by 
\begin{align*}
\bb{E}_n\coloneqq\left\{\msf{e}:[n]\rightarrow\bb{Z}\middle|0\leq \msf{e}(i)<i,\msf{e}(i)\leq \msf{e}(i+1)\right\}.
\end{align*}
We note that we have a bijection $\bb{E}_n\cong\bb{A}_n$ given by $\msf{a}(i)=i-1-\msf{e}(i)$ and $\bb{E}_n\cong\bb{H}_n$ given by $\msf{h}(i)=n-\msf{e}(n+1-i)$. For $\msf{e}\in\bb{E}_n$, we denote by $\Gamma_{\msf{e}}$ the unit interval graph corresponding to $\msf{e}\in\bb{E}_n\cong\bb{A}_n$. 

For two unit interval graphs $\Gamma$ and $\Gamma'$ with $n$ and $n'$ vertices, we can consider their ordered disjoint union $\Gamma\cup\Gamma'$, where the vertices of $\Gamma$ are labeled by $1,\ldots,n$ and the vertices of $\Gamma'$ are labeled by $n+1,\ldots,n+n'$ in $\Gamma\cup\Gamma'$. In terms of $\bb{A}_n$ or $\bb{E}_n$, this operation can be described as follows.

\begin{dfn}\label{Def_concat}
For $\msf{a}\in\bb{A}_n$ and $\msf{a}'\in\bb{A}_{n'}$, we define their concatenation $\msf{a}\cup\msf{a}'\in\bb{A}_{n+n'}$ by
\begin{align*}
	(\msf{a}\cup\msf{a}')(i)=\begin{cases}
		\msf{a}(i)&\mbox{ if }1\leq i\leq n,\\
		\msf{a}'(i-n)&\mbox{ if }n+1\leq i\leq n+n'.
	\end{cases}
\end{align*}

For $\msf{e}\in\bb{E}_n$ and $\msf{e}'\in\bb{E}_{n'}$, we define their concatenation $\msf{e}\cup\msf{e}'\in\bb{E}_{n+n'}$ by
\begin{align*}
	(\msf{e}\cup\msf{e}')(i)=\begin{cases}
		\msf{e}(i)&\mbox{ if }1\leq i\leq n,\\
		n+\msf{e}'(i-n)&\mbox{ if }n+1\leq i\leq n+n'.
	\end{cases}
\end{align*}
\end{dfn}

\begin{exa}\label{Exa_complete_graph}
Let $\msf{e}_n\in\bb{E}_n$ be the element given by
\begin{align*}
	\msf{e}_n(i)=0
\end{align*}
 for any $i\in[n]$. The corresponding graph $\Gamma_{\msf{e}_n}$ is the complete graph of $n$-vertices.	For a composition $\mu=(\mu_1,\ldots,\mu_l)$ of $n$, we write
\begin{align*}
	\msf{e}_{\mu}\coloneqq\msf{e}_{\mu_1}\cup\cdots\cup\msf{e}_{\mu_l}\in\bb{E}_n.
\end{align*} 
\end{exa}

\subsection{Chromatic quasisymmetric functions}

We next recall the notion of chromatic quasisymmetric functions defined by Shareshian-Wachs \cite{SW12,SW16} for labeled graphs and Ellzey \cite{Ell17} for oriented graphs. For an oriented graph $\Gamma$, we denote by $\mr{Col}(\Gamma)$ the set of proper colorings $\kappa:\Gamma\rightarrow\bb{Z}_{>0}$, i.e., maps from the set of vertices of $\Gamma$ to $\bb{Z}_{>0}$ such that for each edge $v\rightarrow w$, we have $\kappa(v)\neq \kappa(w)$. For $\kappa\in\mr{Col}(\Gamma)$, we denote by $\mr{asc}(\kappa)$ the number of edges $v\rightarrow w$ in $\Gamma$ such that $\kappa(v)<\kappa(w)$.

\begin{dfn}
The chromatic quasisymmetric function $\mb{X}_{\Gamma}(t)$ of an oriented graph $\Gamma$ is defined by 
\begin{align*}
	\mb{X}_{\Gamma}(t)=\sum_{\kappa\in\mr{Col}(\Gamma)}t^{\mr{asc}(\kappa)}\prod_{v\in\Gamma}X_{\kappa(v)}.
\end{align*}
\end{dfn}

It is known that $\mb{X}_{\Gamma}(t)$ is a symmetric function if $\Gamma$ is an unit interval graph. We use the indeterminant $t$ here since this corresponds to the $t$ in $\mathscr{H}_m$ later.

\begin{exa}
	For $\msf{e}_n\in\bb{E}_n$ as in Example~\ref{Exa_complete_graph}, we have
\begin{align*}
	\mb{X}_{\Gamma_{\msf{e}_n}}(t)=[n]_t!e_n(X).	
\end{align*}
For two oriented graph $\Gamma$ and $\Gamma'$, we have
\begin{align*}
	\mb{X}_{\Gamma\cup\Gamma'}(t)=\mb{X}_{\Gamma}(t)\mb{X}_{\Gamma'}(t).
\end{align*}
In particular, we have
\begin{align*}
	\mb{X}_{\Gamma_{\msf{e}_\mu}}(t)=\prod_{i=1}^{l}[\mu_i]_t! \cdot e_{\lambda}(X)
\end{align*}
for any composition $\mu=(\mu_1,\ldots,\mu_l)$, where $\lambda$ is the partition obtained by rearranging $\mu$.
\end{exa}

\subsection{Modular law}

One of the fundamental properties of chromatic quasisymmetric function is the characterization by the modular law given by the work of Abreu-Nigro \cite{AN21}. In terms of Hessenberg functions, it is given as follows. 

\begin{dfn}[Modular law]\label{Def_ml}
Let $V$ be a $\bb{Q}(t)$-vector space. We say that a map $\chi:\bb{H}_n\rightarrow V$ satisfies the \textit{modular law} if we have
\begin{align*}
	(1+t)\chi(\msf{h})=t\chi(\msf{h}')+\chi(\msf{h}'')
\end{align*}
for any triple $(\msf{h},\msf{h}',\msf{h}'')$ of Hessenberg functions satisfying one of the following conditions:
\begin{enumerate}
	\item There exists $i\in[n-1]$ such that $\msf{h}(i-1)<\msf{h}(i)<\msf{h}(i+1)$ and $\msf{h}(\msf{h}(i))=\msf{h}(\msf{h}(i)+1)$. Moreover, we have $\msf{h}'(j)=\msf{h}''(j)=\msf{h}(j)$ for $j\neq i$ and $\msf{h}'(i)=\msf{h}(i)-1$ and $\msf{h}''(i)=\msf{h}(i)+1$.
	\item There exists $i\in [n-1]$ such that $\msf{h}(i+1)=\msf{h}(i)+1$ and $\msf{h}^{-1}(i)=\emptyset$. Moreover, we have $\msf{h}'(j)=\msf{h}''(j)=\msf{h}(j)$ for $j\neq i,i+1$ and $\msf{h}'(i)=\msf{h}'(i+1)=\msf{h}(i)$ and $\msf{h}''(i)=\msf{h}''(i+1)=\msf{h}(i+1)$.
\end{enumerate}
Here we understand that $\msf{h}(0)=1$ if $i=1$ in (i).
\end{dfn}

In terms of $\bb{E}_n$, the modular law can be written as follows.

\begin{lemma}\label{Lem_emodular}
Let $\mca{R}$ be a $\bb{Q}(t)$-vector space and let $\chi:\bb{E}_n\rightarrow V$ be a map. The composition $\bb{H}_n\cong\bb{E}_n\xrightarrow{\chi} V$ satisfies the modular law if and only if we have
\begin{align*}
	(1+t)\chi(\msf{e})=t\chi(\msf{e}')+\chi(\msf{e}'')
\end{align*}
for any triple $(\msf{e},\msf{e}',\msf{e}'')$ in $\bb{E}_n$ satisfying one of the following conditions:
\begin{enumerate}
	\item There exists $1<i\leq n$ such that $\msf{e}(i-1)<\msf{e}(i)<\msf{e}(i+1)$ and $\msf{e}(\msf{e}(i))=\msf{e}(\msf{e}(i)+1)$. Moreover, we have $\msf{e}'(j)=\msf{e}''(j)=\msf{e}(j)$ for $j\neq i$ and $\msf{e}'(i)=\msf{e}(i)+1$ and $\msf{e}''(i)=\msf{e}(i)-1$.
	\item There exists $i\in[n-1]$ such that $\msf{e}(i+1)=\msf{e}(i)+1$ and $\msf{e}^{-1}(i)=\emptyset$. Moreover, we have $\msf{e}'(j)=\msf{e}''(j)=\msf{e}(j)$ for $j\neq i,i+1$ and $\msf{e}'(i)=\msf{e}'(i+1)=\msf{e}(i+1)$ and $\msf{e}''(i)=\msf{e}''(i+1)=\msf{e}(i)$.
\end{enumerate}
Here we understand that $\msf{e}(n+1)=n-1$ if $i=n$ in (i).
\end{lemma}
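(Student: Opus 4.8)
The plan is to transport the modular law through the explicit bijection $\bb{H}_n\cong\bb{E}_n$ recorded before the lemma, namely $\msf{h}(i)=n-\msf{e}(n+1-i)$, or equivalently $\msf{e}(j)=n-\msf{h}(n+1-j)$. Writing $\widetilde{\chi}\coloneqq\chi\circ(\bb{H}_n\cong\bb{E}_n)$ for the composite map on $\bb{H}_n$, the point is that the modular relation $(1+t)\widetilde{\chi}(\msf{h})=t\widetilde{\chi}(\msf{h}')+\widetilde{\chi}(\msf{h}'')$ is \emph{identical} to $(1+t)\chi(\msf{e})=t\chi(\msf{e}')+\chi(\msf{e}'')$ once we know that $\msf{e}=\phi(\msf{h})$, $\msf{e}'=\phi(\msf{h}')$, $\msf{e}''=\phi(\msf{h}'')$ under the bijection $\phi$. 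So the entire content of the proof is to check that the two families of triples $(\msf{h},\msf{h}',\msf{h}'')$ of Definition~\ref{Def_ml}(i),(ii) correspond bijectively to the triples $(\msf{e},\msf{e}',\msf{e}'')$ of the lemma's (i),(ii), with the primed term always mapping to the primed term so that the coefficient $t$ stays attached correctly.

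First I would record the basic dictionary: the substitution converts values by $\msf{h}(i)=n-\msf{e}(n+1-i)$, it reverses inequalities, and it carries the boundary convention $\msf{h}(0)=1$ (used when $i=1$) to $\msf{e}(n+1)=n-1$ (used when $j=n$), matching the two conventions stated at the ends of Definition~\ref{Def_ml} and the lemma. This also pins down the index ranges: in case (i) the substitution $j=n+1-i$ sends $i\in[n-1]$ to $1<j\le n$, and in case (ii) the substitution $j=n-i$ sends $i\in[n-1]$ to $j\in[n-1]$.

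For case (i), I would verify term by term that (a) the monotonicity $\msf{h}(i-1)<\msf{h}(i)<\msf{h}(i+1)$ becomes $\msf{e}(j-1)<\msf{e}(j)<\msf{e}(j+1)$; (b) the composed condition $\msf{h}(\msf{h}(i))=\msf{h}(\msf{h}(i)+1)$ becomes $\msf{e}(\msf{e}(j))=\msf{e}(\msf{e}(j)+1)$; and (c) the local changes $\msf{h}'(i)=\msf{h}(i)-1$, $\msf{h}''(i)=\msf{h}(i)+1$ become $\msf{e}'(j)=\msf{e}(j)+1$, $\msf{e}''(j)=\msf{e}(j)-1$ (the sign of the increment flips, which is exactly the asymmetry between the two statements). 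For case (ii), with $j=n-i$ so that the modified adjacent pair $i,i+1$ corresponds to $j,j+1$, I would check $\msf{h}(i+1)=\msf{h}(i)+1\Leftrightarrow\msf{e}(j+1)=\msf{e}(j)+1$, $\msf{h}^{-1}(i)=\emptyset\Leftrightarrow\msf{e}^{-1}(j)=\emptyset$, and that the imposed constant values translate to $\msf{e}'(j)=\msf{e}'(j+1)=\msf{e}(j+1)$ and $\msf{e}''(j)=\msf{e}''(j+1)=\msf{e}(j)$, again as in the lemma.

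The only step that needs genuine care, though still elementary, is the composed fixed-point condition in case (i): one must substitute the bijection inside its own argument, using $\msf{h}(i)=n-\msf{e}(j)$ to obtain $\msf{h}(\msf{h}(i))=n-\msf{e}(\msf{e}(j)+1)$ and $\msf{h}(\msf{h}(i)+1)=n-\msf{e}(\msf{e}(j))$, while checking that all indices stay in range (guaranteed by $\msf{h}(i)<\msf{h}(i+1)\le n$ on the Hessenberg side and by $0<\msf{e}(j)<j$ on the $\bb{E}_n$ side). Once these identifications are established, bijectivity of $\bb{H}_n\cong\bb{E}_n$ shows that every triple satisfying the lemma's (i) or (ii) is the image of a unique triple satisfying Definition~\ref{Def_ml}(i) or (ii), and conversely; hence the two sets of modular relations coincide and the stated equivalence follows immediately.
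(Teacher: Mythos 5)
Your proposal is correct and is essentially identical to the paper's own (one-line) proof, which simply observes that the statement follows from Definition~\ref{Def_ml} by substituting $i\mapsto n+1-i$ in condition (i) and $i\mapsto n-i$ in condition (ii) under the bijection $\msf{h}(i)=n-\msf{e}(n+1-i)$. Your more detailed verification of the translated conditions, including the composed condition $\msf{h}(\msf{h}(i))=\msf{h}(\msf{h}(i)+1)$ and the boundary conventions, is exactly what that substitution amounts to.
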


\begin{proof}
	This follows from the definition by replacing $i$ with $n+1-i$ in the first condition and replacing $i$ with $n-i$ in the second condition in Definition~\ref{Def_ml}.
\end{proof}

\begin{thm}[Abreu-Nigro \cite{AN21}]\label{Thm_AN}
	The map
\begin{align*}
	\chi:\bigsqcup_{n\geq0}\bb{E}_n\rightarrow\Lambda_{q,t}
\end{align*}
given by $\chi(\msf{e})=\mb{X}_{\Gamma_{\msf{e}}}(t)$ is the unique map that satisfies the following conditions:
	\begin{enumerate}
		\item $\chi$ satisfies the modular law.
		\item $\chi(\msf{e}\cup \msf{e}')=\chi(\msf{e})\chi(\msf{e}')$ for any $\msf{e}\in\bb{E}_n$ and $\msf{e}'\in\bb{E}_{n'}$.
		\item $\chi(\msf{e}_n)=[n]_t!\cdot e_n(X)$.
	\end{enumerate}
\end{thm}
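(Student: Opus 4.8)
The plan is to prove the two halves of the statement separately: first that $\chi(\msf{e})=\mb{X}_{\Gamma_{\msf{e}}}(t)$ really does satisfy (i)--(iii), and then that these three properties pin $\chi$ down uniquely. For the existence half, conditions (ii) and (iii) are precisely the two displayed identities recorded in the Example immediately preceding Definition~\ref{Def_ml}, so only the modular law (i) needs work. I would verify it directly from the coloring definition of $\mb{X}_{\Gamma}(t)$: in each of the two cases of Lemma~\ref{Lem_emodular} the graphs $\Gamma_{\msf{e}},\Gamma_{\msf{e}'},\Gamma_{\msf{e}''}$ differ only in the edges attached to one vertex (case (i)) or to an adjacent pair of vertices (case (ii)). Partitioning $\mr{Col}(\Gamma_{\msf{e}})$ according to the restriction of a coloring to the unaffected vertices, the identity $(1+t)\mb{X}_{\Gamma_{\msf{e}}}(t)=t\mb{X}_{\Gamma_{\msf{e}'}}(t)+\mb{X}_{\Gamma_{\msf{e}''}}(t)$ reduces, on each fiber, to a local three-term identity on the small modified subgraph, which one checks by a weight-preserving bijection tracking the single ascent created or destroyed by the edge modification.

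For the uniqueness half, the key point is a purely combinatorial reduction: the modular relations should allow one to rewrite the formal symbol $[\msf{e}]$, for any $\msf{e}\in\bb{E}_n$, as a $\bb{Q}(t)$-linear combination of the symbols $[\msf{e}_{\mu}]$ attached to concatenations of complete graphs. Granting this, any map $\chi$ satisfying (i) is determined on all of $\bb{E}_n$ by its values on the $\msf{e}_{\mu}$; and by (ii) and (iii) those values are forced to be $\chi(\msf{e}_{\mu})=\prod_i[\mu_i]_t!\cdot e_{\lambda}(X)$, where $\lambda$ is the partition underlying the composition $\mu$. Since the $e_{\lambda}(X)$ are linearly independent in $\Lambda_{q,t}$, this data is consistent and determines $\chi$ completely.

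To establish the reduction I would place a partial order (equivalently, a complexity statistic) on $\bb{E}_n$ designed so that the $\msf{e}_{\mu}$ are exactly the minimal elements, and show that every other $\msf{e}$ participates in a relation from Lemma~\ref{Lem_emodular} that can be solved to express $\chi(\msf{e})$ in terms of $\chi$ at strictly lower elements. The subtlety is that each three-term relation can be solved for any of its three terms, so the statistic must be arranged so that exactly one of $\msf{e},\msf{e}',\msf{e}''$ is maximal and gets eliminated in favor of the other two: the value-swap relation (case (ii)) and the $\pm1$-shift relation (case (i), where $\msf{e}'(i)=\msf{e}(i)+1$ and $\msf{e}''(i)=\msf{e}(i)-1$) then both become reductions in the same, consistent direction. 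In each application one must also check that the sequences produced still lie in $\bb{E}_n$. Iterating down the order terminates at the $\msf{e}_{\mu}$.

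The hard part is exactly this reduction lemma: constructing the order and proving that a \emph{reducing} modular move is always available at any sequence not of the form $\msf{e}_{\mu}$, and that no move increases the statistic. This is the combinatorial core of Abreu--Nigro's theorem and the step most sensitive to the precise shape of the two relations in Lemma~\ref{Lem_emodular}. An alternative, more structural route would be a dimension count: show that $\dim_{\bb{Q}(t)}$ of the quotient of the free space $\bb{Q}(t)[\bb{E}_n]$ by the span of all modular relations equals the number of partitions of $n$, matching the number of independent values forced by (ii) together with (iii). I expect the explicit reduction algorithm to be the cleaner path, however, and would attempt it first, falling back on the dimension count only if orienting the relations proves awkward.
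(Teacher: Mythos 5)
This theorem is not proved in the paper at all: it is imported verbatim from Abreu--Nigro \cite{AN21} (with Remark~\ref{Rem_AN} recording the stronger uniqueness statement), so there is no internal argument to compare yours against. Judged on its own terms, your proposal correctly identifies the standard two-part strategy --- verify (i)--(iii) for the coloring-defined $\mb{X}_{\Gamma_{\msf{e}}}(t)$, then show the modular relations reduce every $\msf{e}$ to the $\msf{e}_{\mu}$ --- but both halves stop exactly at the point where the mathematical content begins, so as written this is a plan rather than a proof.

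Concretely: in the existence half, conditions (ii) and (iii) are indeed immediate, but your verification of the modular law does not engage with the hypotheses that make it true. In case (i) of Definition~\ref{Def_ml} the relation $(1+t)\chi(\msf{h})=t\chi(\msf{h}')+\chi(\msf{h}'')$ holds \emph{only} under the extra condition $\msf{h}(\msf{h}(i))=\msf{h}(\msf{h}(i)+1)$, and is false without it; a ``weight-preserving bijection tracking the single ascent'' must use this condition (it guarantees that the vertex whose adjacency to $i$ is being toggled has the same out-neighborhood as the one it is exchanged with, which is what lets the local swap of colors preserve properness and shift the ascent count by exactly one). Your sketch never says where this enters, so the step that could fail is left unexamined. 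In the uniqueness half you explicitly defer the reduction lemma --- the existence of a statistic on $\bb{E}_n$ for which the $\msf{e}_{\mu}$ are the minimal elements and every non-minimal element admits a consistently oriented reducing move. That lemma \emph{is} the theorem of \cite{AN21}; naming it as ``the hard part'' and not supplying the order, the choice of move, or the termination argument leaves the uniqueness claim unsupported. (A minor additional point: the consistency of the reduction, i.e.\ independence of the reduction path, follows from the existence half, not from the linear independence of the $e_{\lambda}(X)$ as you suggest.) For the purposes of this paper the correct move is simply to cite \cite{AN21}, as the author does; if you want a self-contained proof you must actually construct the reduction order and check the case-(i) hypothesis in the bijection.
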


\begin{remark}\label{Rem_AN}
Abreu-Nigro \cite{AN21} actually show that if $\chi$ satisfies the modular law, then it is uniquely determined by the values at $\msf{e}_{\mu}$ for all compositions $\mu$. 
\end{remark}

\subsection{Affine Hecke algebras}

\begin{dfn}\label{def:AHA}
Let $\mathscr{H}_m$ be the affine Hecke algebra of $GL_{m}$ over $\bb{Q}_{q,t}$. Namely, $\mathscr{H}_m$ is the $\bb{Q}_{q,t}$-algebra generated by $T_i$ for $i\in\bb{Z}/m\bb{Z}$ and $\Pi^{\pm1}$ satisfying the following relations:
\begin{itemize}
	\item $(T_i-t)(T_i+1)=0$,
	\item $T_iT_{i+1}T_i=T_{i+1}T_iT_{i+1}$ if $m>2$,
	\item $T_iT_j=T_jT_i$ if $j\neq i\pm1$, 
	\item $\Pi T_i=T_{i+1}\Pi$.
\end{itemize}
\end{dfn}


We consider the level one polynomial representation of $\mathscr{H}_m$ on
$\bb{Q}_{q,t}[X^{\pm1}]_{(m)}$ defined by 
\begin{align}
	T_i\bullet F&=ts_i(F)+(t-1)\frac{F-s_i(F)}{1-X_iX_{i+1}^{-1}},\label{Eqn_commTX}\\
	\Pi\bullet F&=X_1F(X_2,\ldots,X_{m},X_{m+1}),\label{Eqn_commPiX}
\end{align}
for any $F=F(X_1,\ldots,X_m)\in\bb{Q}_{q,t}[X^{\pm1}]_{(m)}$. Here, we set
\begin{align*}
	s_i(F)=\begin{cases}
F(\ldots,X_{i+1},X_i,\ldots)&\mbox{ if }i=1,\ldots,m-1,\\
X_1X_0^{-1}\cdot F(X_0,X_2,\ldots,X_{m-1},X_{m+1})&\mbox{ if }i=0.
\end{cases}
\end{align*}
and recall that $X_0=qX_m$ and $X_{m+1}=q^{-1}X_1$.

\begin{remark}
We remark that the action of $s_0$ and $\Pi$ is twisted, and this twist corresponds to the level one action from a viewpoint of affine Lie algebras \cite{Kac90} instead of the usual (level zero) action employed in \cite{Che05}.
\end{remark}

\begin{lemma}\label{Lem_AHA_action}
For any $k,l\in\bb{Z}$ and $i=1,\ldots,m-1$, we have
\begin{align*}
	T_i\bullet X_i^kX_{i+1}^l&=\begin{cases}
		tX_i^lX_{i+1}^k+(t-1)(X_i^{l-1}X_{i+1}^{k+1}+X_i^{l-2}X_{i+1}^{k+2}+\cdots+X_i^kX_{i+1}^l)&\mbox{ if }l\geq k,\\
		X_i^lX_{i+1}^k-(t-1)(X_i^{l+1}X_{i+1}^{k-1}+X_i^{l+2}X_{i+1}^{k-2}+\cdots+X_i^{k-1}X_{i+1}^{l+1})&\mbox{ if }l<k.
	\end{cases}\\
	T_i^{-1}\bullet X_i^kX_{i+1}^l&=\begin{cases}
		X_i^lX_{i+1}^k+(1-t^{-1})(X_i^{l-1}X_{i+1}^{k+1}+X_i^{l-2}X_{i+1}^{k+2}+\cdots+X_i^{k+1}X_{i+1}^{l-1})&\mbox{ if }l> k,\\
		t^{-1}X_i^lX_{i+1}^k-(1-t^{-1})(X_i^{l+1}X_{i+1}^{k-1}+X_i^{l+2}X_{i+1}^{k-2}+\cdots+X_i^{k}X_{i+1}^{l})&\mbox{ if }l\leq k.
	\end{cases}
\end{align*}
In particular, $T_i^{\pm1}$ with $i\neq0$ preserves the subspace $\bb{Q}_{q,t}[X]_{(m)}\subset\bb{Q}_{q,t}[X^{\pm1}]_{(m)}$
\end{lemma}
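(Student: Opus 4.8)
The statement is a direct computation. The plan is to verify the two displayed formulas for $T_i \bullet X_i^k X_{i+1}^l$ and $T_i^{-1} \bullet X_i^k X_{i+1}^l$ by unwinding the definition in (\ref{Eqn_commTX}), and then deduce the stability of $\bb{Q}_{q,t}[X]_{(m)}$ as an immediate corollary. Since $i \neq 0$, the operator $s_i$ simply swaps $X_i$ and $X_{i+1}$, so everything reduces to a two-variable computation in $X_i, X_{i+1}$ with all other variables inert; I will write $x = X_i$, $y = X_{i+1}$ to lighten notation.

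\smallskip

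First I would compute $T_i \bullet x^k y^l$ directly. By (\ref{Eqn_commTX}),
\begin{align*}
	T_i \bullet x^k y^l = t\, x^l y^k + (t-1)\frac{x^k y^l - x^l y^k}{1 - x y^{-1}}.
\end{align*}
The key step is to simplify the quotient. Writing $1 - x y^{-1} = y^{-1}(y - x)$, the fraction becomes
\begin{align*}
	\frac{x^k y^l - x^l y^k}{1 - x y^{-1}} = \frac{y\,(x^k y^l - x^l y^k)}{y - x} = x^k y^k \cdot y \cdot \frac{y^{\,l-k} - x^{\,l-k}}{y - x}
\end{align*}
(after factoring out $x^{\min(k,l)} y^{\min(k,l)}$ appropriately). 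The point is that $\frac{y^{\,l-k}-x^{\,l-k}}{y-x}$ is the finite geometric sum $\sum_{j=0}^{l-k-1} x^j y^{\,l-k-1-j}$ when $l \geq k$, and equals $-\sum_{j=0}^{k-l-1} x^{-(j+1)}\cdots$ in the reversed form when $l < k$. Multiplying back through by $t-1$ and $x^k y^{k+1}$ and collecting monomials reproduces exactly the two cases in the first formula; the case split on $l \geq k$ versus $l < k$ is precisely the sign of $l-k$ governing the direction of the geometric expansion. I would present this as two short computations rather than one, since the ranges of summation differ.

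\smallskip

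For $T_i^{-1}$, the cleanest route is to invert the Hecke relation $(T_i - t)(T_i + 1) = 0$, which gives $T_i^{-1} = t^{-1} T_i + (t^{-1} - 1)$, and then apply the formula just proved for $T_i$; alternatively one recomputes directly from an explicit formula for $T_i^{-1} \bullet F$ analogous to (\ref{Eqn_commTX}). Either way the case boundary shifts from $l \geq k$ to $l > k$ because the $t^{-1}$-coefficient lands on the ``diagonal'' term $x^l y^k$ differently; this bookkeeping is the one place to be careful. Finally, the ``in particular'' claim is immediate: inspecting the right-hand sides, every monomial appearing has nonnegative exponents whenever $k, l \geq 0$ (in the $l \geq k$ case the exponents of $x$ run down from $l$ to $k \geq 0$, and in the $l < k$ case the $T_i^{-1}$ formula's lowest $x$-power is $x^k \geq 0$), so $T_i^{\pm 1}$ sends monomials in $\bb{Q}_{q,t}[X]_{(m)}$ back into $\bb{Q}_{q,t}[X]_{(m)}$, and by linearity preserves the whole polynomial subring.

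\smallskip

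The main obstacle is purely organizational rather than conceptual: keeping the summation ranges and the placement of the $t$ versus $t-1$ (respectively $t^{-1}$ versus $1 - t^{-1}$) coefficients correct across the four cases, and verifying that the telescoping geometric sum reproduces the stated finite lists of monomials without off-by-one errors at the endpoints $j = l$ and $j = k$. There is no deep difficulty, only care in the algebra.
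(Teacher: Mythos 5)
Your proposal is correct and follows essentially the same route as the paper: a direct two-variable computation from (\ref{Eqn_commTX}) via the finite geometric sum, with the polynomial-stability claim reduced to the fact that $T_i$ commutes with multiplication by $X_j$ for $j\neq i,i+1$. The only cosmetic difference is that the paper computes $T_i^{-1}$ from an explicit formula for its action rather than from the quadratic relation $T_i^{-1}=t^{-1}T_i+(t^{-1}-1)$; both are equivalent one-line reductions.
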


\begin{proof}
This follows from straightforward calculations using (\ref{Eqn_commTX}) and
\begin{align*}
	T_i^{-1}\bullet F=s_i(F)+(1-t^{-1})\frac{X_iX_{i+1}^{-1}F-s_i(F)}{1-X_iX_{i+1}^{-1}}
\end{align*}
for any $F\in\bb{Q}_{q,t}[X^{\pm1}]_{(m)}$. The latter assertion follows by the first assertion and 
\begin{align*}
	T_i\bullet X_jF=X_j(T_i\bullet F)
\end{align*}
for any $j\neq i,i+1$.
\end{proof}

For $i=1,\ldots,m$, we set
\begin{align}\label{Eqn_Y_i}
	Y_i\coloneqq t^{m-i}T_{i-1}T_{i-2}\cdots T_1\Pi T_{m-1}^{-1}T_{m-2}^{-1}\cdots T_i^{-1} \in \mathscr{H}_m.
\end{align}
Note that our $Y_i$ essentially corresponds to $Y_i^{-1}$ in the usual convention for DAHA (see e.g. \cite{Che05}). It is well-known that the elements $Y_1,\ldots,Y_m$ mutually commute and satisfy
\begin{align}\label{Eqn_commTY}
	T_iF(Y)=s_i(F(Y))T_i+(t-1)\frac{F(Y)-s_i(F(Y))}{1-Y_iY_{i+1}^{-1}}
\end{align}
for any Laurent polynomial $F(Y)$ in $Y_1,\ldots,Y_m$ and $i=1,\ldots,m-1$. In particular, we obtain $\bb{Q}_{q,t}[Y^{\pm1}]_{(m)}\subset\msc{H}_m$. 

\section{Quantum multiplications}

In this section, we define and study the quantum multiplication on $\bb{Q}_{q,t}[X]_{(\infty)}$.

\subsection{The map $\msf{q}_{(m)}$}

We define a $\bb{Q}_{q,t}$-linear map $\msf{q}_{(m)}:\bb{Q}_{q,t}[Y^{\pm1}]_{(m)} \rightarrow\bb{Q}_{q,t}[X^{\pm1}]_{(m)}$ by
\begin{align*}
	\msf{q}_{(m)}(F(Y))=F(Y)\bullet1,
\end{align*}
where $\bullet$ means the $\mathscr{H}_m$-action on $\bb{Q}_{q,t}[X^{\pm1}]_{(m)}$ and $1\in\bb{Q}_{q,t}[X^{\pm1}]_{(m)}$. 

\begin{lemma}\label{Lem_iota}
The map	$\msf{q}_{(m)}$ is an isomorphism as $\bb{Q}_{q,t}$-vector spaces. This also restricts to an isomorphism $\msf{q}_{(m)}:\bb{Q}_{q,t}[Y]_{(m)} \xrightarrow{\sim}\bb{Q}_{q,t}[X]_{(m)}$ of $\bb{Q}_{q,t}$-vector spaces.
\end{lemma}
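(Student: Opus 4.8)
The plan is to prove the polynomial statement first and then bootstrap to the Laurent case by localizing at the central element $Y_1\cdots Y_m$. First I would record that $\msf{q}_{(m)}$ is graded of degree $0$ on the polynomial part: by Lemma~\ref{Lem_AHA_action} each $T_j^{\pm1}$ with $1\le j\le m-1$ preserves homogeneity and preserves $\bb{Q}_{q,t}[X]_{(m)}$, while $\Pi\bullet F=X_1F(X_2,\dots,X_m,q^{-1}X_1)$ raises degree by exactly $1$ and also preserves polynomiality by (\ref{Eqn_commPiX}). Since $Y_i$ in (\ref{Eqn_Y_i}) is built only from $\Pi$ and $T_1^{\pm1},\dots,T_{m-1}^{\pm1}$ and never from $T_0$, the operator $Y_i\bullet(-)$ preserves $\bb{Q}_{q,t}[X]_{(m)}$ and raises degree by $1$. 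Hence $\msf{q}_{(m)}$ maps $\bb{Q}_{q,t}[Y]_{(m),d}$ into $\bb{Q}_{q,t}[X]_{(m),d}$, two spaces of equal finite dimension $\binom{d+m-1}{m-1}$, so it suffices to prove bijectivity in each degree.

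I would establish this by a triangularity argument. Order monomials by the \emph{shape order}: $X^{\nu}\preceq X^{\nu'}$ iff the partition obtained by sorting $\nu$ is dominated by the one obtained by sorting $\nu'$. The key claim is that for every $\nu\in\bb{Z}_{\ge0}^{m}$ one has
\[
Y_i\bullet X^{\nu}=c_{i,\nu}\,X_iX^{\nu}+(\text{terms of strictly smaller shape}),\qquad c_{i,\nu}\in\bb{Q}_{q,t}^{\times}.
\]
Granting this, induction on degree gives $Y^{\mu}\bullet1=c_{\mu}X^{\mu}+(\text{strictly smaller shape})$ with $c_{\mu}\ne0$: indeed $Y_i\bullet1=X_i$, since the $m-i$ factors $T_j^{-1}$ in $Y_i$ each act by $t^{-1}$ on $1$ and cancel the prefactor $t^{m-i}$, after which $\Pi\bullet1=X_1$ is carried to $X_i$ by $T_{i-1}\cdots T_1$; applying one more $Y_i$ then sends the leading monomial $X^{\mu}$ to $X^{\mu+e_i}$. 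Grouping the monomial bases of $\bb{Q}_{q,t}[Y]_{(m),d}$ and $\bb{Q}_{q,t}[X]_{(m),d}$ by shape, the matrix of $\msf{q}_{(m)}$ becomes block triangular for the shape order, and each diagonal block is diagonal with nonzero entries $c_{\mu}$; hence $\msf{q}_{(m)}$ is an isomorphism in each degree.

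To prove the leading-term claim I would push $X^{\nu}$ through the three factors of $Y_i=t^{m-i}(T_{i-1}\cdots T_1)\,\Pi\,(T_{m-1}^{-1}\cdots T_i^{-1})$ using Lemma~\ref{Lem_AHA_action} and (\ref{Eqn_commPiX}). Each $T_j^{\pm1}$ only replaces an adjacent exponent pair $(a,b)$ by monomials whose exponent pairs lie weakly between $(a,b)$ and $(b,a)$, so it \emph{preserves} the shape and, on the top-shape part, merely sorts the pair into descending order with invertible coefficient $1,t,$ or $t^{-1}$; whereas $\Pi$ performs the cyclic shift $\nu\mapsto(\nu_m+1,\nu_1,\dots,\nu_{m-1})$, raising the shape by a single box. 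The main obstacle is precisely this bookkeeping: one must verify that the shift produced by $\Pi$ together with the descending sorting produced by the strings $T_{m-1}^{-1}\cdots T_i^{-1}$ and $T_{i-1}\cdots T_1$ composes to multiply the top-shape monomial by exactly $X_i$ (and not some other $X_j$) with a nonzero scalar, and that every genuinely new monomial created along the way has strictly smaller shape. This is elementary but requires careful tracking of which exponent is incremented and how it is permuted.

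Finally I would pass from $\bb{Q}_{q,t}[Y]_{(m)}$ to $\bb{Q}_{q,t}[Y^{\pm1}]_{(m)}$ by localizing at $Y_1\cdots Y_m$. Telescoping the product via $\Pi T_j=T_{j+1}\Pi$ yields the identity $Y_1Y_2\cdots Y_m=t^{\binom{m}{2}}\Pi^m$ in $\msc{H}_m$, and a direct computation from (\ref{Eqn_commPiX}) gives $\Pi^m\bullet F=(X_1\cdots X_m)\,F(q^{-1}X_1,\dots,q^{-1}X_m)$, an invertible operator carrying $\bb{Q}_{q,t}[X]_{(m)}$ onto $(X_1\cdots X_m)\bb{Q}_{q,t}[X]_{(m)}$. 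Since $\msf{q}_{(m)}\!\left((Y_1\cdots Y_m)^{k}F(Y)\right)=(Y_1\cdots Y_m)^{k}\bullet\msf{q}_{(m)}(F(Y))=t^{k\binom{m}{2}}(\Pi^m)^{k}\bullet\msf{q}_{(m)}(F(Y))$, the map $\msf{q}_{(m)}$ intertwines multiplication by $(Y_1\cdots Y_m)^{\pm1}$ with the invertible operator $t^{\pm\binom{m}{2}}(\Pi^m)^{\pm1}$. Writing $\bb{Q}_{q,t}[Y^{\pm1}]_{(m)}=\bigcup_{k\ge0}(Y_1\cdots Y_m)^{-k}\bb{Q}_{q,t}[Y]_{(m)}$ as an increasing union and matching it with $\bigcup_{k\ge0}(\Pi^m)^{-k}\bb{Q}_{q,t}[X]_{(m)}=\bb{Q}_{q,t}[X^{\pm1}]_{(m)}$, the polynomial isomorphism propagates compatibly to each term, which yields the full isomorphism and completes the proof.
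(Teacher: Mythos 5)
Your reduction steps are sound and match the paper: $\msf{q}_{(m)}$ is graded of degree $0$ and preserves the polynomial part, the graded pieces $\bb{Q}_{q,t}[Y]_{(m),d}$ and $\bb{Q}_{q,t}[X]_{(m),d}$ have the same finite dimension, and the passage to Laurent polynomials via $Y_1\cdots Y_m=t^{\binom{m}{2}}\Pi^m$ (whose action is invertible and carries $\bb{Q}_{q,t}[X]_{(m)}$ onto $(X_1\cdots X_m)\bb{Q}_{q,t}[X]_{(m)}$) is exactly how the paper bootstraps from the polynomial case. The gap is in the heart of the argument: your key claim
\[
Y_i\bullet X^{\nu}=c_{i,\nu}\,X_iX^{\nu}+(\text{terms of strictly smaller shape})
\]
is false. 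Take $m=2$, $i=2$, $\nu=(2,0)$. Then $Y_2=T_1\Pi$, $\Pi\bullet X_1^2=X_1X_2^2$, and Lemma~\ref{Lem_AHA_action} gives $T_1\bullet X_1X_2^2=tX_1^2X_2+(t-1)X_1X_2^2$. Both monomials have shape $(2,1)$, so the unwanted term $(t-1)X_1X_2^2$ is \emph{not} of strictly smaller shape. This is the generic behaviour: when $T_i$ hits an ascending exponent pair it returns $t\cdot(\text{swap})+(t-1)\cdot(\text{original})+(\text{lower})$, so the diagonal blocks of your matrix are genuinely non-diagonal, and in fact $\Pi$ sends distinct monomials of one shape class into \emph{different} shape classes (e.g.\ $\Pi\bullet X_1^2=X_1X_2^2$ but $\Pi\bullet X_2^2=q^{-2}X_1^3$), so even block-triangularity of $\msf{q}_{(m)}$ for the dominance order is not available by the route you describe. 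A second, independent failure of the induction: applying $Y_i$ to a strictly-lower-shape correction term can raise its shape back to the top (e.g.\ $Y_2\bullet X_1X_2=q^{-1}X_1X_2^2$ has shape $(2,1)$, the same as the putative leading shape of $Y_2\bullet X_1^2$). You flagged this bookkeeping as "the main obstacle"; carrying it out shows the triangularity is simply not there.

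The paper avoids all of this with a one-line specialization trick that you may want to adopt: since the matrix entries of $\msf{q}_{(m)}$ on each graded piece are Laurent polynomials in $q,t$, and at $q=t=1$ each $Y_i$ acts as multiplication by $X_i$ (so $\msf{q}_{(m)}$ specializes to the tautological isomorphism $F(Y)\mapsto F(X)$), the fact that specialization cannot increase the rank of a matrix forces the generic rank to be full, hence $\msf{q}_{(m)}$ is an isomorphism on each graded piece. Note that your counterexample coefficient $(t-1)$ vanishes at $t=1$, which is precisely why the specialization argument succeeds where the triangularity argument does not.
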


\begin{proof}
Since $T_1^{\pm1},\ldots,T_{m-1}^{\pm1}$ preserve the degree and $\Pi$ increases the degree by one (and hence each $Y_1,\ldots,Y_{m}$ increases the degree by one), the map $\msf{q}_{(m)}$ preserves the degree. Hence it is enough to check that $\msf{q}_{(m)}$ is an isomorphism on the degree $d$ parts. 

Recall that $T_1^{\pm1},\ldots,T_{m-1}^{\pm1}$, and $\Pi$ preserve the polynomial part $\bb{Q}_{q,t}[X]_{(m)}$ by Lemma~\ref{Lem_AHA_action} and (\ref{Eqn_commPiX}). This implies that $\msf{q}_{(m)}$ preserves the polynomial parts. Since
\begin{align*}
	t^{-n(n-1)/2}Y_1\cdots Y_m=\Pi^m
\end{align*}
acts by the multiplication by $q^dX_1\cdots X_m$ on the degree $d$ part and the translation of the polynomial part $\bb{Q}_{q,t}[X]_{(m)}$ by the action of $\{\Pi^{-k}\}_{k\geq 0}$ exhausts $\bb{Q}_{q,t}[X^{\pm1}]_{(m)}$. Therefore, it is enough to prove that for any $d\in\bb{Z}_{\geq0}$, the map $\msf{q}_{(m)}$ induces an isomorphism 
\begin{align*}
	\msf{q}_{(m)}:\bb{Q}_{q,t}[Y]_{(m),d} \xrightarrow{\sim}\bb{Q}_{q,t}[X]_{(m),d}
\end{align*}
between finite dimensional $\bb{Q}_{q,t}$-vector spaces of the same dimension. Under the specialization $q=t=1$, the action of $Y_i$ is given by the multiplication by $X_i$. Since the specialization does not increase the rank of a matrix, we conclude that $\msf{q}_{(m)}$ is also surjective, and hence an isomorphism.
\end{proof}

\begin{lemma}\label{Lem_sym_to_sym}
A Laurent polynomial $F(Y)\in\bb{Q}_{q,t}[Y^{\pm1}]_{(m)}$ is symmetric if and only if $\msf{q}_{(m)}(F(Y))\in\bb{Q}[X^{\pm1}]_{(m)}$ is symmetric.
\end{lemma}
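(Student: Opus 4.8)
The plan is to reduce both symmetry conditions to a single eigenvalue condition for the operators $T_1,\ldots,T_{m-1}$ on $\bb{Q}_{q,t}[X^{\pm1}]_{(m)}$, and then to pass between the $X$-side and the $Y$-side using the Bernstein commutation relation (\ref{Eqn_commTY}) together with the injectivity of $\msf{q}_{(m)}$ supplied by Lemma~\ref{Lem_iota}. The whole argument is formal once two integral-domain computations are in place.

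First I would record two facts on the $X$-side. Since $s_i(1)=1$ for $i\in[m-1]$, formula (\ref{Eqn_commTX}) gives $T_i\bullet 1 = t$. More importantly, I claim that $G\in\bb{Q}_{q,t}[X^{\pm1}]_{(m)}$ is $\mf{S}_m$-symmetric if and only if $T_i\bullet G = tG$ for all $i\in[m-1]$. The forward direction is immediate from (\ref{Eqn_commTX}), since the divided-difference term vanishes when $s_i(G)=G$. For the converse, setting $K = G-s_i(G)$ and rearranging (\ref{Eqn_commTX}) reduces $T_i\bullet G=tG$ to $L\,(1-tX_iX_{i+1}^{-1})=0$, where $L = K/(1-X_iX_{i+1}^{-1})$ is a genuine Laurent polynomial because $K$ is $s_i$-antisymmetric, hence divisible by $X_i-X_{i+1}$. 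As $\bb{Q}_{q,t}[X^{\pm1}]_{(m)}$ is an integral domain and $1-tX_iX_{i+1}^{-1}\neq 0$, we get $K=0$, i.e. $s_i(G)=G$; quantifying over the generating transpositions $s_1,\ldots,s_{m-1}$ proves the claim.

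Next I would transport $T_i$ through $\msf{q}_{(m)}$. Applying the identity (\ref{Eqn_commTY}) in $\msc{H}_m$ to the vector $1$ and using $T_i\bullet 1=t$ together with the $\bb{Q}_{q,t}$-linearity of the action yields the key formula
\begin{align*}
	T_i\bullet\msf{q}_{(m)}(F(Y)) = t\,\msf{q}_{(m)}(s_i(F(Y))) + (t-1)\,\msf{q}_{(m)}\!\left(\frac{F(Y)-s_i(F(Y))}{1-Y_iY_{i+1}^{-1}}\right),
\end{align*}
valid for every $F(Y)\in\bb{Q}_{q,t}[Y^{\pm1}]_{(m)}$ and $i\in[m-1]$. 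By the previous paragraph, $\msf{q}_{(m)}(F(Y))$ is symmetric exactly when the left-hand side equals $t\,\msf{q}_{(m)}(F(Y))$ for every $i$. Since $\msf{q}_{(m)}$ is injective by Lemma~\ref{Lem_iota}, for each $i$ this is equivalent to the identity $t\,s_i(F(Y)) + (t-1)\tfrac{F(Y)-s_i(F(Y))}{1-Y_iY_{i+1}^{-1}} = t\,F(Y)$ holding in $\bb{Q}_{q,t}[Y^{\pm1}]_{(m)}$. Running the same integral-domain manipulation as on the $X$-side, now with the non-zero-divisor $1-tY_iY_{i+1}^{-1}$, this collapses to $s_i(F(Y))=F(Y)$. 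Quantifying over $i\in[m-1]$ gives that $\msf{q}_{(m)}(F(Y))$ is symmetric if and only if $F(Y)$ is $\mf{S}_m$-symmetric, which is the assertion.

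The only genuinely delicate point — and hence the ``obstacle'', though a mild one — is the integral-domain step used on both sides: one must verify that the divided differences are honest Laurent polynomials and that the scalar factors $1-tX_iX_{i+1}^{-1}$ and $1-tY_iY_{i+1}^{-1}$ are non-zero-divisors, both of which hold because the rings are domains and the factors are nonzero. Everything else is formal manipulation with (\ref{Eqn_commTX}) and (\ref{Eqn_commTY}) plus the injectivity already established in Lemma~\ref{Lem_iota}; in particular, no dimension count comparing the symmetric subspaces of $Y$- and $X$-polynomials is required.
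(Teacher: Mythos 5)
Your proposal is correct and follows essentially the same route as the paper: both arguments rest on the intertwining identity obtained by comparing (\ref{Eqn_commTY}) with (\ref{Eqn_commTX}) (your ``key formula'' is exactly the paper's $\msf{q}_{(m)}(T_i\bullet F(Y))=T_i\bullet\msf{q}_{(m)}(F(Y))$ written out), on the characterization of symmetry as the eigenvalue condition $T_i\bullet F=tF$, and on the injectivity of $\msf{q}_{(m)}$ from Lemma~\ref{Lem_iota}. The only difference is that you supply the integral-domain verification of the eigenvalue characterization, which the paper simply asserts as standard.
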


\begin{proof}
For $F(Y)\in\bb{Q}_{q,t}[Y^{\pm1}]_{(m)}$, we define $T_i\bullet F(Y)\in\bb{Q}_{q,t}[Y^{\pm1}]_{(m)}$ by the same formula as (\ref{Eqn_commTX}) by replacing $X$ by $Y$. By the comparison of (\ref{Eqn_commTY}) and (\ref{Eqn_commTX}), we find that 
\begin{align}
	\msf{q}_{(m)}(T_i\bullet F(Y))=T_iF(Y)\bullet1=T_i\bullet\msf{q}_{(m)}(F(Y))\label{eqn_TXY}
\end{align}
for each $i=1,\ldots,m-1$. Note that $F \in \bb{Q}_{q,t}[X^{\pm1}]_{(m)}$ or $F \in \bb{Q}_{q,t}[Y^{\pm1}]_{(m)}$ is symmetric if and only if 
\begin{align*}
	T_i\bullet F=tF
\end{align*}
for any $i=1,\ldots,m-1$. Thus, $F(Y)$ is symmetric if and only if $T_i\bullet F(Y) = tF(Y)$ for $i =1,\ldots,m-1$, that is equivalent to $T_i\bullet\msf{q}_{(m)}(F(Y)) = t\msf{q}_{(m)}(F(Y))$ for $i =1,\ldots,m-1$ by (\ref{eqn_TXY}). Thus, we conclude the result.
\end{proof}

\begin{lemma}\label{Lem_iota_elem}
We have
\begin{align*}
	\msf{q}_{(m)}(e_r(Y_1,\ldots,Y_m))=t^{r(r-1)/2}e_r(X_1,\ldots,X_m).
\end{align*}
\end{lemma}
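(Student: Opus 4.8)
The plan is to compute $\msf{q}_{(m)}(e_r(Y))=e_r(Y)\bullet 1$ directly from the explicit $\msc{H}_m$-action, the point being that when the commuting operators $Y_i$ are applied to $1$ in increasing order of their indices, the result stays square-free and accumulates a predictable power of $t$. We may assume $m\ge r$, since otherwise both sides vanish. By Lemma~\ref{Lem_sym_to_sym} together with the degree- and polynomial-preservation established in the proof of Lemma~\ref{Lem_iota}, we already know the answer is a symmetric polynomial of degree $r$ in $X_1,\ldots,X_m$; the computation below refines this by showing it is in fact square-free, hence a scalar multiple of $e_r(X)$, and it pins down the scalar.

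The key step is the formula $Y_i\bullet X_S=t^{|S|}X_S X_i$ for every $S\subseteq\{1,\ldots,i-1\}$, where $X_S\coloneqq\prod_{a\in S}X_a$. To prove it I unwind (\ref{Eqn_Y_i}): since $X_S$ involves only variables of index $<i$, it is a spectator for each $T_k^{-1}$ with $k\ge i$, so $T_{m-1}^{-1}\cdots T_i^{-1}\bullet X_S=t^{-(m-i)}X_S$; then $\Pi$ contributes a factor $X_1$ and raises all indices by one via (\ref{Eqn_commPiX}); the prefactor $t^{m-i}$ cancels, leaving $(T_{i-1}\cdots T_1)\bullet X_{\{1\}\cup(S+1)}$, where $S+1\coloneqq\{a+1:a\in S\}\subseteq\{2,\ldots,i\}$. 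On this square-free monomial the sweep $T_{i-1}\cdots T_1$ is governed by Lemma~\ref{Lem_AHA_action}: one checks by induction on $k$ that position $k$ is always occupied at the moment $T_k$ acts, so the off-diagonal (``branching'') term never appears, and each $T_k$ either merges two adjacent occupied positions (contributing $t$) or advances a single occupied position one step to the right (contributing $1$). A short count of displacements shows there are exactly $|S|$ merges, and tracking the non-crossing particles shows the final support is $S\cup\{i\}$, yielding the claimed formula.

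With this in hand the computation is immediate. Using that the $Y_i$ commute, I write $e_r(Y)\bullet 1=\sum_{i_1<\cdots<i_r}(Y_{i_r}Y_{i_{r-1}}\cdots Y_{i_1})\bullet 1$ and apply the operators rightmost-first. Starting from $Y_{i_1}\bullet 1=X_{i_1}$ (the case $S=\emptyset$), the formula gives $Y_{i_k}\bullet X_{\{i_1,\ldots,i_{k-1}\}}=t^{k-1}X_{\{i_1,\ldots,i_k\}}$ at each stage, legitimately since $i_1<\cdots<i_{k-1}<i_k$. Hence $(Y_{i_r}\cdots Y_{i_1})\bullet 1=t^{0+1+\cdots+(r-1)}X_{i_1}\cdots X_{i_r}=t^{r(r-1)/2}X_{i_1}\cdots X_{i_r}$, and summing over all $i_1<\cdots<i_r$ gives $t^{r(r-1)/2}e_r(X)$.

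The only substantive work is the key formula $Y_i\bullet X_S=t^{|S|}X_S X_i$; everything else is formal. Concretely, the main obstacle is verifying that the sweep $T_{i-1}\cdots T_1$ never triggers the branching term of Lemma~\ref{Lem_AHA_action} and correctly bookkeeping the resulting power of $t$ — both of which reduce to the elementary occupied-position induction and displacement count sketched above.
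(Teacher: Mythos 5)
Your proposal is correct and takes essentially the same route as the paper: the paper likewise reduces to showing $\left(\prod_{i\in I}Y_i\right)\bullet 1=t^{|I|(|I|-1)/2}\prod_{i\in I}X_i$ by induction on $|I|$, with the inductive step being exactly your key formula $Y_i\bullet X_S=t^{|S|}X_SX_i$, proved by the same unwinding of $Y_i$ (spectator action of $T_k^{-1}$ for $k\geq i$, the shift by $\Pi$, and the sweep $T_{i-1}\cdots T_1$ on a square-free monomial). Your occupied-position induction and displacement count just make explicit what the paper compresses into its chain of equalities.
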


\begin{proof}
It is enough to prove that for any subset $I\subset\{1,\ldots,m\}$, we have 
\begin{align*}
	\msf{q}_{(m)}\left(\prod_{i\in I}Y_i\right)=t^{|I|(|I|-1)/2}\prod_{i\in I}X_i.
\end{align*}
We prove this by induction on $|I|=l$. The initial case $l = 0$ is trivial. Let $I=\{i_1,\ldots,i_{l+1}\}$ with $i_1<i_2<\cdots<i_{l+1}$. By induction hypothesis, we have $\msf{q}_{(m)}(Y_{i_1}\cdots Y_{i_l})=t^{l(l-1)/2}X_{i_1}\cdots X_{i_l}$. Since $T_i(X_i)=X_{i+1}$ and $T_i(F)=tF$ if $s_iF=F$, we calculate
\begin{align*}
	Y_{i_{l+1}}\cdot Y_{i_1}\cdots Y_{i_l}\bullet1&=t^{l(l-1)/2}Y_{i_{l+1}}(X_{i_1}\cdots X_{i_l})\\
	&=t^{l(l-1)/2+n-i_{l+1}}T_{i_{l+1}-1}\cdots T_1\Pi T_{n-1}^{-1}\cdots T_{i_{l+1}}^{-1}(X_{i_1}\cdots X_{i_l})\\
	&=t^{l(l-1)/2}T_{i_{l+1}-1}\cdots T_1\Pi(X_{i_1}\cdots X_{i_l})\\
	&=t^{l(l-1)/2}T_{i_{l+1}-1}\cdots T_1(X_1X_{i_1+1}\cdots X_{i_l+1})\\
	&=t^{l(l-1)/2}T_{i_{l+1}-1}\cdots T_{i_1}(X_{i_1}X_{i_{1}+1}\cdots X_{i_{l}+1})\\
	&=t^{l(l-1)/2+1}T_{i_{l+1}-1}\cdots T_{i_1+1}(X_{i_1}X_{i_{1}+1}\cdots X_{i_{l}+1})\\
	&=\cdots\\
	&=t^{l(l+1)/2}X_{i_1}\cdots X_{i_{l+1}}.
\end{align*} 
This completes the proof.
\end{proof}

\subsection{Quantum multiplications}

In this section, we introduce a new multiplication $\star$ on $\bb{Q}[X^{\pm1}]_{(m)}$ by twisting the usual multiplication $\cdot$ on $\bb{Q}_{q,t}[Y^{\pm1}]_{(m)}$ by $\msf{q}_{(m)}$.

\begin{dfn}\label{Def_qm}
	We define a multiplication $\star$ on $\bb{Q}_{q,t}[X^{\pm1}]_{(m)}$ by
	\begin{align*}
		F\star G\coloneqq\msf{q}_{(m)}\left(\msf{q}_{(m)}^{-1}(F)\cdot\msf{q}_{(m)}^{-1}(G)\right)
	\end{align*}
for any $F,G\in\bb{Q}_{q,t}[X^{\pm1}]_{(m)}$.
\end{dfn}

We note that one can write this multiplication as
\begin{align*}
	F\star G=(\msf{q}_{(m)}^{-1}(F)\cdot\msf{q}_{(m)}^{-1}(G))\bullet1=\msf{q}_{(m)}^{-1}(F)\bullet G.
\end{align*}
It is clear from the definition that $\star$ and the usual addition induce another commutative associative $\bb{Q}_{q,t}$-algebra structure on $\bb{Q}_{q,t}[X^{\pm1}]_{(m)}$ with the same identity $1$ as the usual algebra structure.

\begin{lemma}\label{Lem_star_commute}
If $F\in\bb{Q}_{q,t}[X^{\pm1}]_{(m)}$ is symmetric, then the linear endomorphism 
\begin{align*}
	F\star(-):\bb{Q}_{q,t}[X^{\pm1}]_{(m)}\rightarrow\bb{Q}_{q,t}[X^{\pm}]_{(m)}
\end{align*}
commutes with the $\mathscr{H}_m$-action.
\end{lemma}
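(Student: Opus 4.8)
The plan is to reduce the statement to the centrality of a single element of $\mathscr{H}_m$. By the identity recorded just after Definition~\ref{Def_qm}, for every $G\in\bb{Q}_{q,t}[X^{\pm1}]_{(m)}$ we have $F\star G=\msf{q}_{(m)}^{-1}(F)\bullet G$, where I regard $P\coloneqq\msf{q}_{(m)}^{-1}(F)\in\bb{Q}_{q,t}[Y^{\pm1}]_{(m)}\subset\mathscr{H}_m$ as an element of the affine Hecke algebra (this makes sense since $\msf{q}_{(m)}$ is an isomorphism by Lemma~\ref{Lem_iota}). Thus the endomorphism $F\star(-)$ is nothing but the operator $G\mapsto P\bullet G$ given by the left action of $P$.

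Next I would exploit that $\bb{Q}_{q,t}[X^{\pm1}]_{(m)}$ is a left $\mathscr{H}_m$-module. For any $h\in\mathscr{H}_m$ the associativity of the module action gives
\[
	P\bullet(h\bullet G)=(Ph)\bullet G,\qquad h\bullet(P\bullet G)=(hP)\bullet G.
\]
Hence $F\star(-)$ commutes with the action of every $h$ as soon as $P$ is central in $\mathscr{H}_m$; so the whole lemma comes down to proving that $P$ lies in the center $Z(\mathscr{H}_m)$.

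Finally I would establish centrality from symmetry. Since $F$ is symmetric by hypothesis, Lemma~\ref{Lem_sym_to_sym} shows that $P=\msf{q}_{(m)}^{-1}(F)$ is symmetric in the $Y$-variables, i.e. $P\in\bb{Q}_{q,t}[Y^{\pm1}]_{(m)}^{\mf{S}_m}$. By the Bernstein presentation of the affine Hecke algebra (cf. \cite{Lus89}), the center of $\mathscr{H}_m$ is exactly $\bb{Q}_{q,t}[Y^{\pm1}]_{(m)}^{\mf{S}_m}$, so $P$ is central and the two operators above agree for all $h$ and $G$.

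The argument is almost entirely formal once these inputs are in place; the only substantive ingredient is the identification of $Z(\mathscr{H}_m)$ with the symmetric Laurent polynomials in $Y$, which is the classical Bernstein result already recalled in the introduction. Consequently I do not anticipate a genuine obstacle—the single point requiring care is simply to interpret $F\star(-)=P\bullet(-)$ as left multiplication by $P$ in $\mathscr{H}_m$, so that commuting with the $\mathscr{H}_m$-action becomes precisely the centrality of $P$.
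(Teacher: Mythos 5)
Your proposal is correct and follows essentially the same route as the paper: rewrite $F\star(-)$ as the action of $P=\msf{q}_{(m)}^{-1}(F)\in\bb{Q}_{q,t}[Y^{\pm1}]_{(m)}$, use Lemma~\ref{Lem_sym_to_sym} to see that $P$ is a symmetric Laurent polynomial in the $Y$-variables, and invoke the Bernstein description of the center of $\mathscr{H}_m$. The paper's proof is just a compressed version of the same argument (it also notes that centrality can alternatively be deduced directly from (\ref{Eqn_commTY})), so there is nothing to add.
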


\begin{proof}
By Lemma~\ref{Lem_sym_to_sym}, it is enough to show that if $F\in\bb{Q}_{q,t}[Y^{\pm1}]_{(m)}\subset\mathscr{H}_m$ is symmetric, then $F$ lies in the center of $\mathscr{H}_m$. This is well-known (see e.g. \cite{Lus89}) and also follows from (\ref{Eqn_commTY}) using the fact that $\mathscr{H}_m$ is generated by $T_1,\ldots,T_{m-1}$, and $Y_1^{\pm1}$.
\end{proof}

\begin{prop}\label{Prop_qmult_q=1}
When we specialize at $q=1$, the product $F\star G$ reduces to the usual product $F\cdot G$ for symmetric $F\in\bb{Q}[X^{\pm1}]_{(m)}$ and every $G\in\bb{Q}[X^{\pm1}]_{(m)}$.
\end{prop}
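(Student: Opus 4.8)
The plan is to show that the two linear operators on $\bb{Q}(t)[X^{\pm1}]_{(m)}$ (the $q=1$ specialization) given by $G\mapsto F\star G$ and $G\mapsto F\cdot G$ coincide, by checking that both commute with the $\mathscr{H}_m$-action and agree on the cyclic vector $1$.

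First I would rewrite $F\star G=\msf{q}_{(m)}^{-1}(F)\bullet G$. Writing $P\coloneqq\msf{q}_{(m)}^{-1}(F)\in\bb{Q}_{q,t}[Y^{\pm1}]_{(m)}$, Lemma~\ref{Lem_sym_to_sym} shows $P$ is symmetric, hence central in $\mathscr{H}_m$, so by the proof of Lemma~\ref{Lem_star_commute} the operator $F\star(-)=P\bullet(-)$ commutes with the whole $\mathscr{H}_m$-action; this holds for every $q$. Next I would examine the ordinary multiplication operator $M_F=F\cdot(-)$. Since $F$ is $\mf{S}_m$-symmetric, $s_i(F)=F$, and a direct substitution in (\ref{Eqn_commTX}) gives $T_i\bullet(FG)=F\,(T_i\bullet G)$ for $i=1,\ldots,m-1$, valid for all $q$. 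The crucial point is commutation with $\Pi$: by (\ref{Eqn_commPiX}) we have $\Pi\bullet(FG)=X_1F(X_2,\ldots,X_m,q^{-1}X_1)G(X_2,\ldots,X_m,q^{-1}X_1)$, and only at $q=1$ does symmetry of $F$ give $F(X_2,\ldots,X_m,X_1)=F$, so that $\Pi\bullet(FG)=F\,(\Pi\bullet G)$. Thus at $q=1$ the operator $M_F$ commutes with $T_1,\ldots,T_{m-1}$ and $\Pi^{\pm1}$, hence with all of $\mathscr{H}_m$ (recall $T_0=\Pi T_{m-1}\Pi^{-1}$).

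It then remains to compare the two operators on a generating set. Both send $1$ to $F$: indeed $P\bullet 1=\msf{q}_{(m)}(P)=F$ and $F\cdot 1=F$. I would invoke cyclicity: $1$ generates $\bb{Q}(t)[X^{\pm1}]_{(m)}$ as an $\mathscr{H}_m$-module after setting $q=1$. This follows by running the rank argument in the proof of Lemma~\ref{Lem_iota} at the point $q=1$: the degree-$d$ map $\msf{q}_{(m)}\colon\bb{Q}(t)[Y]_{(m),d}\to\bb{Q}(t)[X]_{(m),d}$ is a map of finite-dimensional spaces of equal dimension whose further specialization to $t=1$ sends $Y_i\mapsto X_i$ and is a bijection; since specialization does not increase rank, $\msf{q}_{(m)}$ already has full rank at $q=1$, hence is an isomorphism there, so $\bb{Q}(t)[Y^{\pm1}]_{(m)}\bullet 1=\bb{Q}(t)[X^{\pm1}]_{(m)}$ and a fortiori $\mathscr{H}_m\bullet 1$ is everything. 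Granting this, for any $v=h\bullet 1$ with $h\in\mathscr{H}_m$ we get $F\star v=h\bullet(F\star 1)=h\bullet F=h\bullet(F\cdot 1)=F\cdot(h\bullet 1)=F\cdot v$, using that both operators commute with $h$; this proves $F\star G=F\cdot G$ for every $G$.

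The main obstacle is the $\Pi$-commutation: this is the only step that genuinely forces $q=1$, and it is precisely what fails for general $q$, explaining why $\star\neq\cdot$ before specialization. A secondary technical point is that Lemma~\ref{Lem_iota} is stated over $\bb{Q}(q,t)$, so cyclicity at the specialized value $q=1$ must be re-derived; I would do this by the semicontinuity-of-rank argument above rather than by quoting the lemma directly, or alternatively by an explicit induction showing every monomial lies in $\mathscr{H}_m\bullet 1$, using $\Pi$ to raise degree and $T_i^{\pm1}$ together with Lemma~\ref{Lem_AHA_action} to redistribute exponents.
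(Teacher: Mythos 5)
Your argument is correct, and it shares the paper's key observation: at $q=1$ the \emph{ordinary} multiplication operator $M_F=F\cdot(-)$ for symmetric $F$ commutes with the full $\mathscr{H}_m$-action, because $s_0$ and $\Pi$ only fail to commute with $M_F$ through the twist $X_{m+1}=q^{-1}X_1$, which disappears at $q=1$. Where you diverge is in how the conclusion is extracted. The paper avoids any appeal to cyclicity by using commutativity of $\star$: it writes
\begin{align*}
F\star G=G\star F=\msf{q}_{(m)}^{-1}(G)\bullet(F\cdot 1)=F\cdot\bigl(\msf{q}_{(m)}^{-1}(G)\bullet 1\bigr)=F\cdot G,
\end{align*}
pulling $F=F\cdot 1$ out through the central element $\msf{q}_{(m)}^{-1}(G)$ via the commutation of $M_F$ with the action. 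This is a one-line finish that needs nothing beyond Lemma~\ref{Lem_star_commute} and the $q=1$ commutation. Your route instead compares the two operators $F\star(-)$ and $M_F$ as $\mathscr{H}_m$-module endomorphisms agreeing on the vector $1$, which forces you to re-establish that $1$ is cyclic \emph{after} specializing $q=1$; your semicontinuity-of-rank argument for this is valid (the matrix entries of $\msf{q}_{(m)}$ lie in $\bb{Q}(t)[q^{-1}]$, so specialization at $q=1$ makes sense, and full rank at $q=t=1$ propagates back), but it is extra work the paper's symmetry trick renders unnecessary. On the other hand, your explicit verification that $\msf{q}_{(m)}$ remains an isomorphism at $q=1$ does address, more carefully than the paper, the implicit point that $\msf{q}_{(m)}^{-1}(G)$ has no pole at $q=1$, so the specialization of the displayed chain of equalities is legitimate. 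Both proofs are sound; the paper's is shorter, yours is somewhat more self-contained on the specialization issue.
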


\begin{proof}
We note that at $q=1$, we have $X_{i+km}=X_{i}$ and hence the usual product by $F$ also commutes with the $\mathscr{H}_m$-action since we have 
\begin{align*}
	s_i(F\cdot G)&=F\cdot s_i(G),\\
	\Pi\bullet(F\cdot G)&=F\cdot(\Pi\bullet G)
\end{align*}
for every $G\in\bb{Q}_{q,t}[X^{\pm1}]_{(m)}$ and $i=0,1,\ldots,m-1$. Therefore, we obtain
\begin{align*}
	F\star G=G\star F=\msf{q}_{(m)}^{-1}(G)\bullet (F\cdot1)=F\cdot(\msf{q}_{(m)}^{-1}(G)\bullet1)=F\cdot G
\end{align*}
as required.
\end{proof}

\subsection{Stabilization}

By Lemma~\ref{Lem_iota}, $\bb{Q}_{q,t} [X]_{(m)}$ is stable under the multiplication $\star$. Applying Lemma~\ref{Lem_sym_to_sym}, we conclude that each of
\begin{align*}
	\Lambda^{(m)}_{q,t}\subset\bb{Q}_{q,t} [X]_{(m)}\subset\bb{Q}_{q,t} [X^{\pm1}]_{(m)}
\end{align*}
is stable under the quantum multiplication $\star$. We next show that $\star$ induces a new ring structure on $\bb{Q}_{q,t} [X]_{(\infty)}$. 

\begin{lemma}\label{Lem_divisibility}
For each $F\in\bb{Q}_{q,t} [X]_{(m)}$ that is divisible by $X_i$, we have
\begin{enumerate}
\item $T_i\bullet F$ is divisible by $X_{i+1}$ for $1 \leq i < m$,
\item $T_{i-1}^{-1}\bullet F $ is divisible by $X_{i-1}$ for $1 < i \leq m$.
\end{enumerate}
\end{lemma}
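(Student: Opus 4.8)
The plan is to reduce each divisibility statement to a transparent algebraic rewriting of the Hecke operator action, so that the required factor can be read off with no case analysis. The two parts are completely parallel, the only difference being which of the two formulas in the proof of Lemma~\ref{Lem_AHA_action} one starts from.

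For (i), I would first reexpress the defining action (\ref{Eqn_commTX}) in terms of the divided difference. Using $1 - X_i X_{i+1}^{-1} = (X_{i+1} - X_i)/X_{i+1}$, one checks
\[
	\frac{F - s_i(F)}{1 - X_i X_{i+1}^{-1}} = -X_{i+1}\,\partial_i(F), \qquad \partial_i(F) \coloneqq \frac{F - s_i(F)}{X_i - X_{i+1}},
\]
where $\partial_i(F) \in \bb{Q}_{q,t}[X]_{(m)}$ because $F - s_i(F)$ is antisymmetric in $X_i, X_{i+1}$ and hence divisible by $X_i - X_{i+1}$. This gives $T_i \bullet F = t\, s_i(F) - (t-1) X_{i+1}\,\partial_i(F)$. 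If $X_i$ divides $F$, writing $F = X_i G$ yields $s_i(F) = X_{i+1}\, s_i(G)$, so the first term is divisible by $X_{i+1}$, while the second term is manifestly divisible by $X_{i+1}$. This settles (i).

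For (ii) I would run the analogous computation on the inverse formula recorded in the proof of Lemma~\ref{Lem_AHA_action}. Writing $j = i-1$, the same manipulation gives
\[
	T_j^{-1} \bullet F = s_j(F) + (1 - t^{-1})\,\frac{N}{X_{j+1} - X_j}, \qquad N \coloneqq X_j F - X_{j+1}\, s_j(F).
\]
Since $F$ is divisible by $X_i = X_{j+1}$, the term $s_j(F)$ is divisible by $X_j$. The step I expect to be the only real obstacle is verifying that the correction term $N/(X_{j+1}-X_j)$ is a genuine polynomial \emph{and} is divisible by $X_j$. The antisymmetry $s_j(N) = -N$ shows $(X_j - X_{j+1}) \mid N$, so the quotient is a polynomial; and specializing $X_j = 0$ gives $N|_{X_j=0} = -X_{j+1}\,(s_j F)|_{X_j = 0} = 0$, precisely because $X_{j+1} \mid F$, so also $X_j \mid N$. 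As $X_j$ and $X_j - X_{j+1}$ are coprime in the UFD $\bb{Q}_{q,t}[X]_{(m)}$, it follows that $X_j(X_j - X_{j+1}) \mid N$, hence $X_j$ divides the correction term. Combined with the first term this completes (ii).

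Alternatively, both parts follow by a more pedestrian route: using $T_i \bullet X_k F = X_k(T_i \bullet F)$ for $k \neq i, i+1$ to reduce to the two-variable monomials $X_i^k X_{i+1}^l$ (with $k\geq 1$ in case (i), resp. $l \geq 1$ in case (ii)), and then reading the divisibility directly off the explicit expansions in Lemma~\ref{Lem_AHA_action}, distinguishing $l \geq k$ from $l < k$. I would prefer the divided-difference route, since it avoids the case bookkeeping and exhibits the required factor without computation.
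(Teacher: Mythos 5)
Your preferred argument is correct, but it is not the route the paper takes: the paper's proof is precisely the ``pedestrian'' alternative you sketch at the end, namely reducing (implicitly, via $T_i^{\pm1}\bullet X_kF=X_k(T_i^{\pm1}\bullet F)$ for $k\neq i,i+1$) to the two-variable monomials $X_i^kX_{i+1}^l$ and reading the divisibility off the four explicit expansions of Lemma~\ref{Lem_AHA_action}, split into the cases $l>k$ and $k\geq l$. Your divided-difference version is sound: the identity $T_i\bullet F=t\,s_i(F)-(t-1)X_{i+1}\partial_i(F)$ follows from $1-X_iX_{i+1}^{-1}=(X_{i+1}-X_i)/X_{i+1}$, and part (i) is then immediate from $s_i(X_iG)=X_{i+1}s_i(G)$. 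For part (ii) your coprimality argument works, but you can shortcut it: writing $F=X_{j+1}H$ gives $N=X_jF-X_{j+1}s_j(F)=X_jX_{j+1}\bigl(H-s_j(H)\bigr)$, so the correction term equals $-X_jX_{j+1}\partial_j(H)$ and the factor $X_j$ is visible without invoking the UFD structure. What your approach buys is the elimination of the case bookkeeping and of the telescoping sums in Lemma~\ref{Lem_AHA_action}; what the paper's approach buys is that the same explicit monomial formulas are reused verbatim elsewhere (e.g.\ in the proofs of Proposition~\ref{Prop_stability} and Lemma~\ref{Lem_partial_Pieri}), so the case analysis is a sunk cost there. Either proof is acceptable.
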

\begin{proof}
Fix $1 \leq i < m$.
By Lemma~\ref{Lem_AHA_action}, we have
\begin{align*}
	T_i \bullet ( X_i^k X_{i+1}^{l} ) = tX_i^lX_{i+1}^k+(t-1)(X_i^{l-1}X_{i+1}^{k+1}+X_i^{l-2}X_{i+1}^{k+2}+\cdots+X_i^kX_{i+1}^l)
\end{align*}
for $l > k \ge 0$ and 
\begin{align*}
	T_i \bullet ( X_i^k X_{i+1}^{l} ) = X_i^lX_{i+1}^k-(t-1)(X_i^{l+1}X_{i+1}^{k-1}+X_i^{l+2}X_{i+1}^{k-2}+\cdots+X_i^{k-1}X_{i+1}^{l+1})
\end{align*}
for $k \ge l \ge 0$. In both of the cases, $k > 0$ implies that $T_i \bullet ( X_i^k X_{i+1}^{l} )$ is divisible by $X_{i+1}$. This is the first assertion.

By Lemma~\ref{Lem_AHA_action}, we have
\begin{align*}
	T_i^{-1} \bullet ( X_i^k X_{i+1}^{l} ) = X_i^lX_{i+1}^k+(1-t^{-1})(X_i^{l-1}X_{i+1}^{k+1}+X_i^{l-2}X_{i+1}^{k+2}+\cdots+X_i^{k+1}X_{i+1}^{l-1})
\end{align*}
for $l > k \geq 0$ and
\begin{align*}
	T_i^{-1} \bullet ( X_i^k X_{i+1}^{l} ) = t^{-1}X_i^lX_{i+1}^k-(1-t^{-1})(X_i^{l+1}X_{i+1}^{k-1}+X_i^{l+2}X_{i+1}^{k-2}+\cdots+X_i^{k}X_{i+1}^{l})
\end{align*}
for $k \ge l \ge 0$. In both of the cases, $l > 0$ implies that $T_i^{-1} \bullet ( X_i^k X_{i+1}^{l} )$ is divisible by $X_{i}$. This is the second assertion.
\end{proof}

\begin{prop}\label{Prop_stability}
For any $F(Y)\in\bb{Q}_{q,t} [Y]_{(m)}$, $G(X)\in\bb{Q}_{q,t} [X]_{(m)}$, and $0<m'<m$, we have
\begin{align*}
	\pi_{m,m'}(F(Y)\bullet G(X))=\pi_{m,m'}(F(Y))\bullet\pi_{m,m'}(G(X)).
\end{align*}
\end{prop}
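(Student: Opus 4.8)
The plan is to reduce the statement to the elementary building blocks of the $\msc{H}_m$-action and then control how the truncation interacts with each of them. First I would reduce to the single-variable truncation $\pi_{m,m-1}$: since setting $X_{m'+1}=\cdots=X_m=0$ factors as $\pi_{m,m'}=\pi_{m'+1,m'}\circ\cdots\circ\pi_{m,m-1}$ and the analogous factorization holds for the (ring-homomorphic) truncation $Y_i\mapsto 0$ on $\bb{Q}_{q,t}[Y]_{(m)}$, an induction on $m-m'$ reduces the proposition to the case $m'=m-1$. Next, both sides are linear in $F(Y)$ and $\pi_{m,m-1}$ is a ring homomorphism on $\bb{Q}_{q,t}[Y]_{(m)}$, so it suffices to treat monomials $F=Y^{\alpha}$. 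Writing $Y^{\alpha}=Y_jY^{\beta}$ and setting $H\coloneqq Y^{\beta}\bullet G\in\bb{Q}_{q,t}[X]_{(m)}$ (a polynomial by Lemma~\ref{Lem_iota}), an induction on $\deg F$ peels off one factor at a time, so everything comes down to the single-generator identity
\begin{align*}
\pi_{m,m-1}(Y_j\bullet H)=\pi_{m,m-1}(Y_j)\bullet\pi_{m,m-1}(H)\qquad(H\in\bb{Q}_{q,t}[X]_{(m)}),
\end{align*}
where on the right we use the $\msc{H}_{m-1}$-action.

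To handle this, I would first record the easy fact that $\pi_{m,m-1}$ commutes with $T_i\bullet(-)$ for $i\le m-2$: by Lemma~\ref{Lem_AHA_action} such a $T_i$ leaves $X_m$ as a spectator variable, so expanding in powers of $X_m$ gives $\pi_{m,m-1}(T_i\bullet F)=T_i\bullet\pi_{m,m-1}(F)$. For $j=m$ we have $Y_m=T_{m-1}\cdots T_1\Pi$ and $\pi_{m,m-1}(Y_m)=0$; since $\Pi\bullet H$ is divisible by $X_1$ by (\ref{Eqn_commPiX}), applying Lemma~\ref{Lem_divisibility}(i) successively with $i=1,\ldots,m-1$ cascades this divisibility up to $X_m$, whence $Y_m\bullet H\in(X_m)$ and both sides vanish.

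For $j<m$, writing $Y_j=t^{m-j}L\,\Pi\,T_{m-1}^{-1}R'$ with $L=T_{j-1}\cdots T_1$ and $R'=T_{m-2}^{-1}\cdots T_j^{-1}$ (all of whose indices are $\le m-2$, hence commute with $\pi_{m,m-1}$), the whole identity collapses to the single relation
\begin{align*}
t\,\pi_{m,m-1}(\Pi\,T_{m-1}^{-1}\bullet Q)=\Pi'\bullet\pi_{m,m-1}(Q)\qquad(Q\in\bb{Q}_{q,t}[X]_{(m)}),
\end{align*}
where $\Pi'$ denotes the operator (\ref{Eqn_commPiX}) for $\msc{H}_{m-1}$; indeed, feeding this back through $L$ and $R'$ turns $t^{m-j}L\Pi T_{m-1}^{-1}R'$ into $t^{m-1-j}L\Pi'R'$, which is exactly $Y_j$ read in $\msc{H}_{m-1}$. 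I would prove this last relation monomial by monomial using Lemma~\ref{Lem_AHA_action}: applying $T_{m-1}^{-1}$, then $\Pi$, then $\pi_{m,m-1}$ to $X_1^{c_1}\cdots X_m^{c_m}$ annihilates everything unless $c_m=0$, in which case only the ``diagonal'' term $t^{-1}X_m^{c_{m-1}}$ of $T_{m-1}^{-1}\bullet X_{m-1}^{c_{m-1}}X_m^{0}$ survives the subsequent $\pi_{m,m-1}\circ\Pi$; the shift then converts it to $q^{-c_{m-1}}X_1^{1+c_{m-1}}X_2^{c_1}\cdots X_{m-1}^{c_{m-2}}$, matching $\Pi'\bullet\pi_{m,m-1}(X^c)$ exactly once the prefactor $t$ cancels the $t^{-1}$.

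The main obstacle is this last relation: it is the only place where the level-one shift $\Pi$ and the Demazure--Lusztig operator $T_{m-1}^{-1}$ genuinely interact with the truncation, and one must check that the $t$-power bookkeeping built into (\ref{Eqn_Y_i}) is precisely what is needed for the surviving diagonal term to reproduce the $\msc{H}_{m-1}$-shift. The remainder of the argument is formal: linearity, the two inductions, the ring-homomorphism property of $\pi_{m,m-1}$ on $\bb{Q}_{q,t}[Y]_{(m)}$, and the spectator-variable observation.
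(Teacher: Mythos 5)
Your argument is correct and follows essentially the same route as the paper: the same reduction to $m'=m-1$ and $F=Y_i$, the same divisibility argument via Lemma~\ref{Lem_divisibility} for the case $i=m$, and for $i<m$ the same core computation in which only the ``diagonal'' term $t^{-1}X_m^{c_{m-1}}$ of $T_{m-1}^{-1}$ survives $\pi_{m,m-1}\circ\Pi$. The only (harmless) difference is organizational: you isolate the single relation $t\,\pi_{m,m-1}(\Pi T_{m-1}^{-1}\bullet Q)=\Pi'\bullet\pi_{m,m-1}(Q)$ and verify it on all monomials (including those with $c_m>0$), whereas the paper first reduces to $G$ not involving $X_m$ and then expands $T_{m-2}^{-1}\cdots T_i^{-1}\bullet G$ in monomials before applying $T_{m-1}^{-1}$ and $\Pi$.
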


\begin{proof}
Since we have $\pi_{m.m'}=\pi_{m'+1,m'}\circ\cdots\circ\pi_{m,m-1}$, it is enough to consider the case $m'=m-1$. It is also enough to consider the case $F(Y)=Y_i$ for some $i=1,\ldots,m$. 

We first note that $Y_m\bullet G(X)$ is divisible by $X_m$ for any $G(X)\in\bb{Q}_{q,t} [X]_{(m)}$. Since $Y_m = T_{m-1} \cdots T_1 \Pi$ and $\Pi\bullet G(X)$ is divisible by $X_1$, we deduce that $Y_m \bullet G(X)$ is divisible by $X_m$ by repeated applications of Lemma~\ref{Lem_divisibility}. In particular, this proves the case $i=m$.

We assume $i<m$. We show that if $G(X)$ is divisible by $X_m$, then $Y_i\bullet G(X)$ is also divisible by $X_m$. Recall that $T_i$ commutes with the multiplication by $X_j$ for $j\neq i,i+1$. Hence $T_{m-2}^{-1}\cdots T_i^{-1}\bullet G(X)$ is divisible by $X_m$. By Lemma~\ref{Lem_divisibility}, we find that $T_{m-1}^{-1}\cdots T_i^{-1}\bullet G(X)$ is divisible by $X_{m-1}$. It follows that $\Pi T_{m-1}^{-1}\cdots T_i^{-1}\bullet G(X)$ is divisible by $X_m$. Since the action of $T_{i-1},\cdots,T_1$ commutes with the multiplication by $X_m$, we deduce that $Y_i\bullet G(X)$ is also divisible by $X_m$.

Therefore, we may assume that $G(X)$ does not contain $X_m$ and hence $\pi_{m,m-1}(G(X))=G(X)$ considered as an element of $\bb{Q}_{q,t} [X]_{(m-1)}$. Since each of $T_i^{-1}$ $(1 \leq i < m-1)$ does not introduce $X_m$ by applying it to a polynomial without $X_m$, we deduce $T_{m-2}^{-1}\cdots T_{i}^{-1}\bullet G(X)$ does not contain $X_m$. We expand it by monomials as
\begin{align*}
T_{m-2}^{-1}\cdots T_{i}^{-1}\bullet G(X)=\sum_{\mb{a}=(a_1,\ldots,a_{m-1})}c_{\mb{a}}X_1^{a_1}\cdots X_{m-1}^{a_{m-1}} 
\end{align*}
for some $c_{\mb{a}}\in\bb{Q}_{q,t}$. Since the action of $T_1,\ldots,T_{m-2}$ are the same for $\bb{Q}_{q,t} [X]_{(m)}$ and $\bb{Q}_{q,t} [X]_{(m-1)}$, the same formula also holds for $G(X)$ replaced with $\pi_{m,m-1}(G(X))$. By Lemma~\ref{Lem_AHA_action}, we obtain
\begin{align*}
	T_{m-1}^{-1}\cdots T_i^{-1}\bullet G(X)=t^{-1}\sum_{\mb{a}}c_{\mb{a}}X_1^{a_1}\cdots X_{m-2}^{a_{m-2}}X_m^{a_{m-1}}+X_{m-1}H(X)
\end{align*}
for some $H(X)\in\bb{Q}_{q,t} [X]_{(m)}$. Hence we obtain
\begin{align*}
	t^{m-i}\Pi T_{m-1}^{-1}\cdots T_i^{-1}\bullet G(X)=t^{m-i-1}\sum_{\mb{a}}c_{\mb{a}}X_2^{a_1}\cdots X_{m-1}^{a_{m-2}}(q^{-1}X_1)^{a_{m-1}}+X_{m}(\Pi\bullet H(X)).
\end{align*}
On the other hand, we have
\begin{align*}
	t^{m-1-i}\Pi T_{m-2}^{-1}\cdots T_{i}^{-1}\bullet\pi_{m,m-1}(G(X)))=t^{m-i-1}\sum_{\mb{a}}c_{\mb{a}}X_2^{a_1}\cdots X_{m-2}^{a_{m-2}}(q^{-1}X_1)^{a_{m-1}}.
\end{align*}
Therefore, we obtain $\pi_{m,m-1}(Y_i\bullet G(X))=Y_{i}\bullet\pi_{m,m-1}(G(X))$ as desired.
\end{proof}

\begin{corollary}\label{Cor_q_stability}
For any $F(Y)\in\bb{Q}_{q,t}[Y]_{(m)}$ and $0<m'<m$, we have
\begin{align*}
	\msf{q}_{(m')}(\pi_{m,m'}(F(Y)))=\pi_{m,m'}(\msf{q}_{(m)}(F(Y))).
\end{align*}
In particular, we obtain a well-defined $\bb{Q}_{q,t}$-linear isomorphism
\begin{align*}
	\msf{q}:\bb{Q}_{q,t}[Y]_{(\infty)}\xrightarrow{\sim}\bb{Q}_{q,t}[X]_{(\infty)}.
\end{align*}
\end{corollary}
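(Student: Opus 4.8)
=== PROOF PROPOSAL ===

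The plan is to prove Corollary~\ref{Cor_q_stability} by leveraging the commutation relation established in Proposition~\ref{Prop_stability} together with the explicit definition of $\msf{q}_{(m)}$. The key observation is that $\msf{q}_{(m)}(F(Y)) = F(Y)\bullet 1$ by definition, so the desired compatibility $\msf{q}_{(m')}(\pi_{m,m'}(F(Y))) = \pi_{m,m'}(\msf{q}_{(m)}(F(Y)))$ amounts to showing that evaluating at $1$, applying the group action, and truncating commute with each other in the appropriate way.

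First I would unwind both sides of the claimed equality using the definition of $\msf{q}$. The right-hand side is $\pi_{m,m'}(F(Y)\bullet 1)$, where $1 \in \bb{Q}_{q,t}[X]_{(m)}$ and the action is the $\msc{H}_m$-action. The left-hand side is $\pi_{m,m'}(F(Y))\bullet 1$, where now $1 \in \bb{Q}_{q,t}[X]_{(m')}$ and the action is the $\msc{H}_{m'}$-action. The crucial point is that $\pi_{m,m'}(1) = 1$ since the constant polynomial $1$ is unaffected by setting variables to zero. Thus Proposition~\ref{Prop_stability}, applied with $G(X) = 1$, gives exactly
\begin{align*}
	\pi_{m,m'}(F(Y)\bullet 1) = \pi_{m,m'}(F(Y))\bullet \pi_{m,m'}(1) = \pi_{m,m'}(F(Y))\bullet 1,
\end{align*}
which is precisely the stabilization identity once we reinterpret $\pi_{m,m'}(F(Y))$ as an element of $\bb{Q}_{q,t}[Y]_{(m')} \subset \msc{H}_{m'}$ acting on $\bb{Q}_{q,t}[X]_{(m')}$. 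The only subtlety here is ensuring that the truncation of $F(Y) \in \bb{Q}_{q,t}[Y]_{(m)}$ to $\bb{Q}_{q,t}[Y]_{(m')}$ is compatible with how $\pi_{m,m'}$ acts on the $Y$-variables; this should follow from the definition of the $Y_i$ and the fact that $\msf{q}_{(m)}$ preserves polynomial parts and degree by Lemma~\ref{Lem_iota}.

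For the second assertion, that $\msf{q}$ is a well-defined $\bb{Q}_{q,t}$-linear isomorphism on the projective limits, I would argue that the compatibility just proven means the maps $\{\msf{q}_{(m)}\}$ form a compatible family with respect to the truncation maps $\pi_{m,m'}$, hence descend to a map $\msf{q}$ on $\varprojlim_m$. Since each $\msf{q}_{(m)}$ is a degree-preserving isomorphism by Lemma~\ref{Lem_iota}, and the projective limit $\bb{Q}_{q,t}[X]_{(\infty)}$ is taken degreewise, the induced map $\msf{q}$ is an isomorphism in each fixed degree, and therefore an isomorphism overall.

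The main obstacle I anticipate is the bookkeeping in the first assertion: one must verify that truncating the element $F(Y) \in \msc{H}_m$ (viewed through its $Y$-polynomial expression) and then applying the $\msc{H}_{m'}$-action yields the same result as applying the $\msc{H}_m$-action and then truncating the output in $X$. This is essentially the content of Proposition~\ref{Prop_stability}, so the work reduces to confirming that the specialization $G(X) = 1$ is legitimate and that the identification of $\pi_{m,m'}(F(Y))$ with the corresponding element of $\msc{H}_{m'}$ is consistent. Given Proposition~\ref{Prop_stability}, this final step is essentially immediate.
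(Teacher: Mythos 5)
Your proposal is correct and matches the paper's proof, which likewise obtains the identity by specializing $G(X)=1$ in Proposition~\ref{Prop_stability} and noting that the resulting compatible family of isomorphisms passes to the degreewise projective limit.
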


\begin{proof}
	This is the case of $G(X)=1$ in Proposition~\ref{Prop_stability}.
\end{proof}

\begin{corollary}\label{Cor_spmult}
For each $F,G\in\bb{Q}_{q,t} [X]_{(m)}$ and $0<m'<m$, we have
\begin{align*}
	\pi_{m,m'}(F\star G)=\pi_{m,m'}(F)\star \pi_{m,m'}(G).
\end{align*}
In particular, we obtain a well-defined multiplication $\star$ on $\bb{Q}_{q,t}[X]_{(\infty)}$ and $\Lambda_{q,t}$.
\end{corollary}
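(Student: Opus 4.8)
The plan is to transport the stabilization statement through the definition of $\star$ using the maps $\msf{q}_{(m)}$ and $\msf{q}_{(m')}$. Recall that by Definition~\ref{Def_qm} we have $F\star G=\msf{q}_{(m)}(\msf{q}_{(m)}^{-1}(F)\cdot\msf{q}_{(m)}^{-1}(G))$, where $\cdot$ is the ordinary commutative multiplication on $\bb{Q}_{q,t}[Y]_{(m)}$. The key external input is Corollary~\ref{Cor_q_stability}, which states that $\msf{q}$ is compatible with truncation, i.e.\ $\msf{q}_{(m')}\circ\pi_{m,m'}=\pi_{m,m'}\circ\msf{q}_{(m)}$ on $\bb{Q}_{q,t}[Y]_{(m)}$. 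Since $\msf{q}_{(m)}$ and $\msf{q}_{(m')}$ are isomorphisms by Lemma~\ref{Lem_iota}, this intertwining relation dualizes to the statement $\msf{q}_{(m')}^{-1}\circ\pi_{m,m'}=\pi_{m,m'}\circ\msf{q}_{(m)}^{-1}$ on the $X$-side.

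First I would fix $F,G\in\bb{Q}_{q,t}[X]_{(m)}$ and write $\widetilde{F}\coloneqq\msf{q}_{(m)}^{-1}(F)$ and $\widetilde{G}\coloneqq\msf{q}_{(m)}^{-1}(G)$ in $\bb{Q}_{q,t}[Y]_{(m)}$, so that $F\star G=\msf{q}_{(m)}(\widetilde{F}\cdot\widetilde{G})$. Applying $\pi_{m,m'}$ and then Corollary~\ref{Cor_q_stability} to the element $\widetilde{F}\cdot\widetilde{G}\in\bb{Q}_{q,t}[Y]_{(m)}$ gives
\begin{align*}
\pi_{m,m'}(F\star G)=\pi_{m,m'}\bigl(\msf{q}_{(m)}(\widetilde{F}\cdot\widetilde{G})\bigr)=\msf{q}_{(m')}\bigl(\pi_{m,m'}(\widetilde{F}\cdot\widetilde{G})\bigr).
\end{align*}
The next step is the one substantive point: since $\pi_{m,m'}$ is the algebra homomorphism on $\bb{Q}_{q,t}[Y]_{(m)}$ induced by setting $Y_{m'+1}=\cdots=Y_m=0$ (equivalently, it is a ring map for the ordinary product $\cdot$), it is multiplicative, so $\pi_{m,m'}(\widetilde{F}\cdot\widetilde{G})=\pi_{m,m'}(\widetilde{F})\cdot\pi_{m,m'}(\widetilde{G})$. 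Combining this with the dualized intertwining relation $\pi_{m,m'}(\widetilde{F})=\msf{q}_{(m')}^{-1}(\pi_{m,m'}(F))$ and likewise for $G$, I obtain
\begin{align*}
\pi_{m,m'}(F\star G)=\msf{q}_{(m')}\Bigl(\msf{q}_{(m')}^{-1}(\pi_{m,m'}(F))\cdot\msf{q}_{(m')}^{-1}(\pi_{m,m'}(G))\Bigr)=\pi_{m,m'}(F)\star\pi_{m,m'}(G),
\end{align*}
which is exactly the claimed compatibility.

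The main obstacle to watch is the verification that $\pi_{m,m'}$ really is a ring homomorphism for the ordinary $Y$-multiplication, and that truncation on the $Y$-side matches truncation on the $X$-side in the precise form $\msf{q}_{(m')}^{-1}\circ\pi_{m,m'}=\pi_{m,m'}\circ\msf{q}_{(m)}^{-1}$. The latter is immediate once one inverts the isomorphisms in Corollary~\ref{Cor_q_stability}, and the former holds because the defining substitution $Y_j\mapsto 0$ for $j>m'$ is by construction a quotient of polynomial rings. Granting these, the argument is a short diagram chase; the final assertion that $\star$ descends to a well-defined multiplication on $\bb{Q}_{q,t}[X]_{(\infty)}$ and restricts to $\Lambda_{q,t}$ then follows formally, the stability of $\Lambda_{q,t}$ under $\star$ having already been established via Lemma~\ref{Lem_sym_to_sym}.
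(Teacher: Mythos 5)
Your argument is correct and is essentially the paper's own proof: both apply $\pi_{m,m'}$ to $\msf{q}_{(m)}(\msf{q}_{(m)}^{-1}(F)\cdot\msf{q}_{(m)}^{-1}(G))$, invoke Corollary~\ref{Cor_q_stability} together with the multiplicativity of the $Y$-side truncation and the inverted intertwining relation, and conclude. The extra points you flag (that $Y_j\mapsto 0$ is a ring map because the $Y_i$ generate a genuine polynomial subalgebra, and that $\Lambda_{q,t}$-stability comes from Lemma~\ref{Lem_sym_to_sym}) are exactly the justifications the paper leaves implicit or records just before the corollary.
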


\begin{proof}
By Corollary~\ref{Cor_q_stability}, we obtain
\begin{align*}
	\pi_{m,m'}(F\star G)&=\msf{q}_{(m')}\left(\pi_{m,m'}(\msf{q}_{(m)}^{-1}(F))\cdot\pi_{m,m'}(\msf{q}_{(m)}^{-1}(G))\right)\\
	&=\msf{q}_{(m')}\left(\msf{q}_{(m')}^{-1}(\pi_{m,m'}(F))\cdot\msf{q}_{(m')}^{-1}(\pi_{m,m'}(G))\right)\\
	&=\pi_{m,m'}(F)\star \pi_{m,m'}(G).
\end{align*}
\end{proof}

\subsection{Pieri rule}

In this section, we prove the simplest case of the Pieri type formula for the quantum multiplication by $e_1(X)$ on $e_{r}(X)$. In this section, we fix large enough $m>r$ and $1\leq a\leq m$, we write 
\begin{align*}
	e^{(a)}_r&\coloneqq e_r(X_1,\ldots,X_a),\\
	e'^{(a)}_r&\coloneqq e_r(X_a,\ldots,X_m)
\end{align*}
for simplicity. We understand that $e^{(a)}_r=0$ for $r>a$ and $e'^{(a)}_{r}=0$ for $a>m$. 

\begin{lemma}\label{Lem_partial_Pieri}
For any $1\leq a\leq m$, we have an equality 
\begin{align*}
	&(1+T_1+T_2T_1+\cdots+T_{a-1}T_{a-2}\cdots T_1)\Pi\bullet e_r(X_1,\ldots,X_m)\\
	&=(1-q^{-1})\sum_{k=1}^{a}[k]_te^{(a)}_{k}e'^{(a+1)}_{r-k+1}+q^{-1}\sum_{k=0}^{a}e^{(a)}_1e^{(a)}_ke'^{(a+1)}_{r-k}
\end{align*}
in $\bb{Q}_{q,t}[X]_{(m)}$.
\end{lemma}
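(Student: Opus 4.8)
The plan is to compute the left-hand side directly by tracking how the operator acts on the elementary symmetric polynomial $e_r(X_1,\ldots,X_m)$ monomial class by monomial class. First I would exploit the structure of the operator
\begin{align*}
	\msf{D}_a\coloneqq (1+T_1+T_2T_1+\cdots+T_{a-1}T_{a-2}\cdots T_1)\Pi.
\end{align*}
Applying $\Pi$ first, equation (\ref{Eqn_commPiX}) converts $e_r(X_1,\ldots,X_m)$ into $X_1\cdot e_r(X_2,\ldots,X_{m+1})$, and since $X_{m+1}=q^{-1}X_1$ we may split $e_r(X_2,\ldots,X_{m+1})=e_r(X_2,\ldots,X_m)+q^{-1}X_1 e_{r-1}(X_2,\ldots,X_m)$, producing a term linear in $X_1$ and a term quadratic in $X_1$ (with a $q^{-1}$). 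The appearance of $q^{-1}$ and of the two sums on the right-hand side strongly suggests that this initial split is the source of the two summands in the claimed formula.

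Next I would understand the telescoping operator $1+T_1+T_2T_1+\cdots+T_{a-1}\cdots T_1$. The key computational input is Lemma~\ref{Lem_AHA_action}, together with the two facts used already in the proof of Lemma~\ref{Lem_iota_elem}: namely $T_i\bullet F=tF$ whenever $s_i F=F$, and the explicit action $T_i\bullet(X_i^k X_{i+1}^\ell)$ on a two-variable monomial. The idea is that $T_{j-1}\cdots T_1$ acts on a monomial of the form $X_1\cdot(\text{monomial in }X_2,\ldots)$ by sweeping the distinguished $X_1$ factor rightward, generating a geometric-type sum whose coefficients assemble into the $t$-integers $[k]_t$. I would therefore write $e_r(X_2,\ldots,X_m)=\sum_{k}e_k(X_2,\ldots,X_{a})\,e'^{(a+1)}_{r-k}$-type decompositions, organizing variables into the block $\{1,\ldots,a\}$ on which the $T_j$ act and the complementary block $\{a,a+1,\ldots,m\}$ which is inert, so that each application of the telescope contributes a controlled shift. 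Matching the symmetrization of the first $a$ variables to $e^{(a)}_k$ and collecting the inert factors into $e'^{(a+1)}_{r-k+1}$ should reproduce the two sums.

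The main obstacle I anticipate is the careful bookkeeping in the telescoping step: one must precisely identify, after the sweep $T_{a-1}\cdots T_1$, why the coefficient of $e^{(a)}_k e'^{(a+1)}_{r-k+1}$ is exactly $(1-q^{-1})[k]_t$ in the linear-in-$X_1$ branch while the quadratic branch contributes $q^{-1}e^{(a)}_1 e^{(a)}_k e'^{(a+1)}_{r-k}$. The subtlety is that the partial elementary symmetric polynomial $e_r(X_2,\ldots,X_m)$ is \emph{not} symmetric in the first $a$ variables once the distinguished $X_1$ has been prepended, so I cannot simply invoke $T_i\bullet F=tF$ at each stage; instead the fractional part of (\ref{Eqn_commTX}) genuinely contributes, and it is precisely these correction terms that accumulate into $[k]_t=1+t+\cdots+t^{k-1}$. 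I would handle this by induction on $a$, proving a slightly more refined statement about $\msf{D}_a\bullet e_r$ (keeping the block structure $e^{(a)}_k e'^{(a+1)}_{r-k+1}$ explicit) so that the inductive step reduces to a single application of $T_{a-1}$ and of Lemma~\ref{Lem_AHA_action} to the two-variable pair $(X_{a-1},X_a)$, where the $[k]_t$ coefficients can be verified by a finite computation.
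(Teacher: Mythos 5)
Your plan matches the paper's proof in all essentials: the paper also proceeds by induction on $a$, obtains the two sums from the split $e_r(X_2,\ldots,X_{m+1})=e_r(X_2,\ldots,X_m)+q^{-1}X_1e_{r-1}(X_2,\ldots,X_m)$ in the base case $a=1$, and carries out the inductive step by first differencing consecutive instances of the identity to isolate the single term $T_{a-1}\cdots T_1\Pi\bullet e_r$ (your ``refined statement'') and then applying one more $T_a$ via Lemma~\ref{Lem_AHA_action} to the low-degree monomials in the pair $(X_a,X_{a+1})$, with the $[k]_t$ coefficients arising from the accumulated correction terms exactly as you predict. The remaining work is the block-decomposition bookkeeping you already flag, so the proposal is correct and takes essentially the same route.
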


\begin{proof}
We show this equality by induction on $1\leq a\leq m$. When $a=1$, we calculate
\begin{align*}
	\Pi\bullet e_r(X_1,\ldots,X_m)&=\Pi\bullet\left(e_r(X_1,\ldots,X_{m-1})+X_me_{r-1}(X_1,\ldots,X_{m-1})\right)\\
 &=X_1e_r(X_2,\ldots,X_m)+q^{-1}X_1^2e_{r-1}(X_2,\ldots,X_m)\\
	&=e^{(1)}_1e'^{(2)}_r+q^{-1}e^{(1)}_1e^{(1)}_1e'^{(2)}_{r-1}.
\end{align*}

Assume that the equality holds up to $a$ and show it for $a+1$. By subtracting the equality for $a$ by the equality for $a-1$, we obtain
\begin{align*}
&T_{a-1}T_{a-2}\cdots T_1\Pi\bullet e_r(X_1,\ldots,X_m)\\
&=(1-q^{-1})\sum_{k=1}^{a}[k]_t\left(e^{(a)}_{k}e'^{(a+1)}_{r-k+1}-e^{(a-1)}_{k}e'^{(a)}_{r-k+1}\right)+q^{-1}\sum_{k=0}^{a}\left(e^{(a)}_1e^{(a)}_ke'^{(a+1)}_{r-k}-e^{(a-1)}_1e^{(a-1)}_{k-1}e'^{(a)}_{r-k+1}\right).
\end{align*}
where we use $e_r=0$ for $r<0$.
By using 
\begin{align*}
	e^{(a)}_k&=e^{(a-1)}_k+X_ae^{(a-1)}_{k-1},\\
	e'^{(a+1)}_{r-k+1}&=e'^{(a+2)}_{r-k+1}+X_{a+1}e'^{(a+2)}_{r-k},\\
	e'^{(a)}_{r-k+1}&=e'^{(a+2)}_{r-k+1}+(X_a+X_{a+1})e'^{(a+2)}_{r-k}+X_aX_{a+1}e'^{(a+2)}_{r-k-1},
\end{align*}
we calculate
\begin{align*}
&e^{(a)}_{k}e'^{(a+1)}_{r-k+1}-e^{(a-1)}_{k}e'^{(a)}_{r-k+1}\\
&=(e^{(a-1)}_k+X_ae^{(a-1)}_{k-1})(e'^{(a+2)}_{r-k+1}+X_{a+1}e'^{(a+2)}_{r-k})-e^{(a-1)}_{k}(e'^{(a+2)}_{r-k+1}+(X_a+X_{a+1})e'^{(a+2)}_{r-k}+X_aX_{a+1}e'^{(a+2)}_{r-k-1})\\
&=X_a\left(e^{(a-1)}_{k-1}e'^{(a+2)}_{r-k+1}-e^{(a-1)}_{k}e'^{(a+2)}_{r-k}\right)+X_aX_{a+1}\left(e^{(a-1)}_{k-1}e'^{(a+2)}_{r-k}-e^{(a-1)}_{k}e'^{(a+2)}_{r-k-1}\right)
\end{align*}
and
\begin{align*}
&\sum_{k=0}^{a}\left(e^{(a)}_1e^{(a)}_ke'^{(a+1)}_{r-k}-e^{(a-1)}_1e^{(a-1)}_{k-1}e'^{(a)}_{r-k+1}\right)\\
&=\sum_{k=0}^{a}\left((e^{(a-1)}_1+X_a)(e^{(a-1)}_k+X_ae^{(a-1)}_{k-1})(e'^{(a+2)}_{r-k}+X_{a+1}e'^{(a+2)}_{r-k-1})\right.\\
&\hspace{1em}\left.-e^{(a-1)}_1e^{(a-1)}_{k-1}(e'^{(a+2)}_{r-k+1}+(X_a+X_{a+1})e'^{(a+2)}_{r-k}+X_aX_{a+1}e'^{(a+2)}_{r-k-1})\right)\\
&=\sum_{k=0}^{a}\left(e^{(a-1)}_1e^{(a-1)}_{k}e'^{(a+2)}_{r-k}-e^{(a-1)}_1e^{(a-1)}_{k-1}e'^{(a+2)}_{r-k+1}\right)+X_{a+1}\sum_{k=0}^{a}\left(e^{(a-1)}_1e^{(a-1)}_{k}e'^{(a+2)}_{r-k-1}-e^{(a-1)}_1e^{(a-1)}_{k-1}e'^{(a+2)}_{r-k}\right)\\
&\hspace{1em}+\sum_{k=0}^{a}\left(X_ae^{(a-1)}_{k}e'^{(a+2)}_{r-k}+X_a^2e^{(a-1)}_{k-1}e'^{(a+2)}_{r-k}+X_aX_{a+1}e^{(a-1)}_{k}e'^{(a+2)}_{r-k-1}+X_a^2X_{a+1}e^{(a-1)}_{k-1}e'^{(a+2)}_{r-k-1}\right)\\
&=\sum_{k=0}^{a}\left(X_ae^{(a-1)}_{k}e'^{(a+2)}_{r-k}+X_a^2e^{(a-1)}_{k-1}e'^{(a+2)}_{r-k}+X_aX_{a+1}e^{(a-1)}_{k}e'^{(a+2)}_{r-k-1}+X_a^2X_{a+1}e^{(a-1)}_{k-1}e'^{(a+2)}_{r-k-1}\right).
\end{align*}
Therefore, we obtain
\begin{align*}
&T_{a-1}T_{a-2}\cdots T_1\Pi\bullet e_r(X_1,\ldots,X_m)\\
&=(1-q^{-1})\sum_{k=1}^{a}[k]_t\left(X_a\left(e^{(a-1)}_{k-1}e'^{(a+2)}_{r-k+1}-e^{(a-1)}_{k}e'^{(a+2)}_{r-k}\right)+X_aX_{a+1}\left(e^{(a-1)}_{k-1}e'^{(a+2)}_{r-k}-e^{(a-1)}_{k}e'^{(a+2)}_{r-k-1}\right)\right)\\
&\hspace{1em}+q^{-1}\sum_{k=0}^{a}\left(X_ae^{(a-1)}_{k}e'^{(a+2)}_{r-k}+X_a^2e^{(a-1)}_{k-1}e'^{(a+2)}_{r-k}+X_aX_{a+1}e^{(a-1)}_{k}e'^{(a+2)}_{r-k-1}+X_a^2X_{a+1}e^{(a-1)}_{k-1}e'^{(a+2)}_{r-k-1}\right)\\
&=(1-q^{-1})\sum_{k=0}^{a}t^{k}X_ae^{(a-1)}_{k}e'^{(a+1)}_{r-k}+q^{-1}\sum_{k=0}^{a}X_ae^{(a)}_{k}e'^{(a+1)}_{r-k}.
\end{align*}
In order to prove the case $a+1$, we need to check this equality for $a+1$, i.e.,
\begin{align*}
&T_{a}T_{a-1}\cdots T_1\Pi\bullet e_r(X_1,\ldots,X_m)\\
&=(1-q^{-1})\sum_{k=0}^{a+1}t^{k}X_{a+1}e^{(a)}_{k}e'^{(a+2)}_{r-k}+q^{-1}\sum_{k=0}^{a+1}X_{a+1}e^{(a+1)}_{k}e'^{(a+2)}_{r-k}.
\end{align*}

By Lemma~\ref{Lem_AHA_action}, we obtain 
\begin{align*}
	T_a\bullet X_a&=X_{a+1},\\
	T_a\bullet X_aX_{a+1}&=tX_aX_{a+1},\\
	T_a\bullet X_a^2&=X_{a+1}^2-(t-1)X_aX_{a+1},\\
	T_a\bullet X_a^2X_{a+1}&=X_aX_{a+1}^2.
\end{align*}
Therefore, we calculate
\begin{align*}
&T_{a}T_{a-1}\cdots T_1\Pi\bullet e_r(X_1,\ldots,X_m)\\
&=(1-q^{-1})\sum_{k=1}^{a}[k]_t\left(X_{a+1}\left(e^{(a-1)}_{k-1}e'^{(a+2)}_{r-k+1}-e^{(a-1)}_{k}e'^{(a+2)}_{r-k}\right)+tX_aX_{a+1}\left(e^{(a-1)}_{k-1}e'^{(a+2)}_{r-k}-e^{(a-1)}_{k}e'^{(a+2)}_{r-k-1}\right)\right)\\
&\hspace{1em}+q^{-1}\sum_{k=0}^{a}\left(X_{a+1}e^{(a-1)}_{k}e'^{(a+2)}_{r-k}+(X_{a+1}^2+X_aX_{a+1})e^{(a-1)}_{k-1}e'^{(a+2)}_{r-k}+X_aX_{a+1}^2e^{(a-1)}_{k-1}e'^{(a+2)}_{r-k-1}\right)\\
&=(1-q^{-1})\sum_{k=0}^{a}\left(t^{k}X_{a+1}e^{(a-1)}_{k}e'^{(a+2)}_{r-k}+t^{k+1}X_aX_{a+1}e^{(a-1)}_{k}e'^{(a+2)}_{r-k-1}\right)\\
&\hspace{1em}+q^{-1}\sum_{k=0}^{a+1}\left(X_{a+1}e^{(a-1)}_{k}e'^{(a+2)}_{r-k}+(X_{a+1}^2+X_aX_{a+1})e^{(a-1)}_{k-1}e'^{(a+2)}_{r-k}+X_aX_{a+1}^2e^{(a-1)}_{k-2}e'^{(a+2)}_{r-k}\right)\\
&=(1-q^{-1})\sum_{k=0}^{a+1}t^{k}X_{a+1}e^{(a)}_{k}e'^{(a+2)}_{r-k}+q^{-1}\sum_{k=0}^{a+1}X_{a+1}e^{(a+1)}_{k}e'^{(a+2)}_{r-k}.	
\end{align*}
This completes the proof of the Lemma.
\end{proof}

\begin{thm}[The Pieri rule for the quantum multiplication]\label{Thm_qtPieri_en}
For any $r \in \bb{Z}_{\geq0}$, we have
\begin{align*}
	e_1(X)\star e_r(X)=(1-q^{-1})[r+1]_te_{r+1}(X)+q^{-1}e_1(X)e_r(X).
\end{align*}
\end{thm}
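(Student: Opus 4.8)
The plan is to reduce the quantum product to a single application of the auxiliary computation in Lemma~\ref{Lem_partial_Pieri}, performed in $m$ variables for $m>r$ and then stabilized over $m$. First I would work in $\bb{Q}_{q,t}[X]_{(m)}$ and rewrite the left-hand side using the identity $F\star G=\msf{q}_{(m)}^{-1}(F)\bullet G$ recorded after Definition~\ref{Def_qm}. By Lemma~\ref{Lem_iota_elem} with $r=1$ we have $\msf{q}_{(m)}(e_1(Y))=e_1(X)$, so $\msf{q}_{(m)}^{-1}(e_1(X))=e_1(Y)=Y_1+\cdots+Y_m$, and therefore
\begin{align*}
	e_1(X)\star e_r(X)=(Y_1+\cdots+Y_m)\bullet e_r(X_1,\ldots,X_m).
\end{align*}

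Next I would evaluate the operator $\sum_{i=1}^m Y_i$ on the symmetric polynomial $e_r(X_1,\ldots,X_m)$. Since $e_r$ is symmetric we have $T_j^{-1}\bullet e_r=t^{-1}e_r$ for $1\leq j\leq m-1$, so the trailing factors $T_{m-1}^{-1}\cdots T_i^{-1}$ in the definition (\ref{Eqn_Y_i}) of $Y_i$ contribute exactly $t^{-(m-i)}$, which cancels the prefactor $t^{m-i}$. This gives $Y_i\bullet e_r=T_{i-1}\cdots T_1\Pi\bullet e_r$, and summing over $i=1,\ldots,m$ produces precisely the left-hand side of Lemma~\ref{Lem_partial_Pieri} in the case $a=m$:
\begin{align*}
	(Y_1+\cdots+Y_m)\bullet e_r(X)=(1+T_1+\cdots+T_{m-1}\cdots T_1)\Pi\bullet e_r(X).
\end{align*}

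I would then invoke Lemma~\ref{Lem_partial_Pieri} with $a=m$ and read off its right-hand side. The boundary factors there are $e'^{(m+1)}_j=e_j(X_{m+1},\ldots,X_m)$, i.e. elementary symmetric polynomials in the empty set of variables, so $e'^{(m+1)}_j=\delta_{j,0}$; this collapses each of the two sums to a single surviving term ($k=r+1$ in the first and $k=r$ in the second, both admissible since $m>r$), yielding
\begin{align*}
	e_1(X)\star e_r(X)=(1-q^{-1})[r+1]_te_{r+1}(X_1,\ldots,X_m)+q^{-1}e_1(X_1,\ldots,X_m)e_r(X_1,\ldots,X_m).
\end{align*}
As this holds for every $m>r$ and the truncation maps are compatible with $\star$ by Corollary~\ref{Cor_spmult}, passing to the projective limit gives the asserted identity in $\Lambda_{q,t}$.

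Once Lemma~\ref{Lem_partial_Pieri} is granted there is no genuine analytic obstacle; the argument is purely formal. The one place that demands care is the bookkeeping of the trailing $T^{-1}$-factors and the $t$-powers in $Y_i$, together with the correct reading of the boundary convention $e'^{(m+1)}_j=\delta_{j,0}$, which is exactly what isolates the two clean terms from the otherwise lengthy sums. I expect the final stabilization, namely that the finite-variable right-hand sides are themselves compatible with $\pi_{m,m'}$, to be routine in view of Corollary~\ref{Cor_spmult}.
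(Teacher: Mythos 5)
Your proof is correct and is essentially identical to the paper's own argument, which likewise deduces the theorem from Lemma~\ref{Lem_partial_Pieri} at $a=m$ together with the identity $Y_i\bullet e_r(X)=T_{i-1}\cdots T_1\Pi\bullet e_r(X)$. You simply spell out the details the paper leaves implicit (the reduction via $\msf{q}_{(m)}^{-1}(e_1(X))=e_1(Y)$, the cancellation of the $t$-powers against the trailing $T_j^{-1}$ factors, the reading $e'^{(m+1)}_j=\delta_{j,0}$, and the stabilization over $m$), all of which are handled correctly.
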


\begin{proof}
This follows from Lemma~\ref{Lem_partial_Pieri} when $a=m$ and 
\begin{align*}
	Y_{i}\bullet e_r(X_1,\ldots,X_m)=T_{i-1}\cdots T_1\Pi\bullet e_r(X_1,\ldots,X_m).
\end{align*}
for $i=1,\ldots,m$.
\end{proof}

\section{$(q,t)$-chromatic symmetric functions}

In this section, we define $(q,t)$-chromatic symmetric functions and check some basic properties.

\subsection{Partial symmetrizers}

We first construct certain elements in $\msc{H}_m$ and study its properties.

\begin{dfn}\label{Def_ps}
We define $\msf{S}^{(m)}_{i,e}$ in $\msc{H}_m$ for $i\in\bb{Z}/m\bb{Z}$ and $e\in\bb{Z}$ by 
\begin{align*}
	\msf{S}^{(m)}_{i,e}\coloneqq\begin{cases}
		1+T_i^{-1}+T_{i+1}^{-1}T_i^{-1}+\cdots+T_{m+e-1}^{-1}\cdots T_i^{-1} &\mbox{ if }e\geq i-m,\\
		0 &\mbox{ if }e<i-m.
	\end{cases}
\end{align*}
We also introduce $\widehat{\msf{S}}^{(m)}_a\in\msc{H}_m$ for $a\in\bb{Z}$ by
\begin{align*}
	\widehat{\msf{S}}^{(m)}_{a}\coloneqq \msf{S}^{(m)}_{1,a-m+1}\Pi=\begin{cases}
		(1+T_1^{-1}+T_{2}^{-1}T_1^{-1}+\cdots+T_{a}^{-1}\cdots T_1^{-1})\Pi &\mbox{ if }a\geq 0,\\
		0&\mbox{ if }a<0. 
	\end{cases}
\end{align*}
\end{dfn}

For example, we have $\msf{S}^{(m)}_{i,i-m}=1$ and $\widehat{\msf{S}}^{(m)}_{0}=\Pi$. In this paper, we only consider the case $e<i$ for $\msf{S}^{(m)}_{i,e}$ and $a<m$ for $\widehat{\msf{S}}^{(m)}_{a}$.

\begin{lemma}\label{Lem_Sinv}
Let $0< a<m$ and $F(X)\in\mathbb Q_{q,t} [X^{\pm1}]_{(m)}$. The element $\widehat{\msf{S}}^{(m)}_{a}\bullet F(X) \in\mathbb Q_{q,t} [X^{\pm1}]_{(m)}$ is $s_i$-invariant if
\begin{enumerate}
	\item $i=1$ and $F(X)$ is invariant under $s_{1}$,
	\item $1 < i < a$ and $F(X)$ is invariant under $s_{i-1}$ and $s_i$, or
	\item $i=a$ or $a+1<i<m$, and $F(X)$ is invariant under $s_{i-1}$.
\end{enumerate}
\end{lemma}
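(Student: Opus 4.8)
The plan is to reduce the claimed $s_i$-invariance to a single operator identity in $\msc{H}_m$ and then prove that identity by a short telescoping computation. As in the proof of Lemma~\ref{Lem_sym_to_sym}, the single-index version of (\ref{Eqn_commTX}) shows that a Laurent polynomial $G$ is $s_i$-invariant if and only if $T_i\bullet G=tG$, i.e. $(T_i-t)\bullet G=0$, for $i=1,\ldots,m-1$. So it suffices to prove, in each case, that $(T_i-t)\widehat{\msf{S}}^{(m)}_a\bullet F=0$ under the stated hypotheses. Write $\widehat{\msf{S}}^{(m)}_a=\sigma\Pi$ with $\sigma=\sum_{k=0}^a v_k$ and $v_k=T_k^{-1}T_{k-1}^{-1}\cdots T_1^{-1}$ (so $v_0=1$). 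The strategy is to compute $T_i\sigma-t\sigma$, push the trailing $\Pi$ to the right, and let the invariance of $F$ annihilate what remains.

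First I would compute $T_i\sigma$ termwise, assuming $1\le i\le a$. Since $T_i$ commutes with $v_k$ for $k\le i-2$, those terms contribute $\sum_{k\le i-2}v_kT_i$; the delicate products are the boundary ones. Using the quadratic relation in the form $T_j=tT_j^{-1}+(t-1)$ together with the braid identity $T_j^{-1}T_{j+1}^{-1}T_j^{-1}=T_{j+1}^{-1}T_j^{-1}T_{j+1}^{-1}$, one gets the local identities
\begin{align*}
T_iv_{i-1}=tv_i+(t-1)v_{i-1},\quad T_iv_i=v_{i-1},\quad T_iv_k=tv_kT_{i+1}^{-1}+(t-1)v_k\ \ (k>i).
\end{align*}
Summing over $k$ and using $tT_{i+1}^{-1}-1=T_{i+1}-t$, the inner contributions telescope and collapse to the clean form
\begin{align*}
T_i\sigma-t\sigma=\Big(\sum_{k=0}^{i-2}v_k\Big)(T_i-t)+\Big(\sum_{k=i+1}^{a}v_k\Big)(T_{i+1}-t),
\end{align*}
valid for all $1\le i\le a$. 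Verifying this telescoping—that everything except the two boundary factors cancels—is the technical heart, and the step I expect to be the main obstacle; the small values of $a$, the range of $k$, and the braid relation (needing $m>2$) must all be tracked carefully.

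Finally I would multiply on the right by $\Pi$ and use $\Pi T_l=T_{l+1}\Pi$, equivalently $(T_l-t)\Pi=\Pi(T_{l-1}-t)$, to obtain
\begin{align*}
(T_i-t)\widehat{\msf{S}}^{(m)}_a\bullet F=\Big(\sum_{k=0}^{i-2}v_k\Big)\Pi(T_{i-1}-t)\bullet F+\Big(\sum_{k=i+1}^{a}v_k\Big)\Pi(T_i-t)\bullet F.
\end{align*}
The three cases now fall out from which sum is nonempty. In case (ii), $1<i<a$, both sums occur and $s_{i-1}$- and $s_i$-invariance of $F$ give $(T_{i-1}-t)\bullet F=(T_i-t)\bullet F=0$. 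In case (i), $i=1$, the first sum is empty and only $(T_1-t)\bullet F=0$ ($s_1$-invariance) is required. In case (iii) with $i=a$ the second sum is empty, leaving only the need for $s_{i-1}=s_{a-1}$-invariance; and for $i\ge a+2$ the generator $T_i$ commutes with all of $\sigma$, so $T_i\sigma-t\sigma=\sigma(T_i-t)$ and again only $s_{i-1}$-invariance is needed. The excluded value $i=a+1$ is precisely where the top product $T_{a+1}v_a=T_{a+1}T_a^{-1}v_{a-1}$ has no neighbouring $v_{a+1}$ to absorb it, so no such collapse occurs—consistent with the hypotheses.
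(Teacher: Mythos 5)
Your proposal is correct and takes essentially the same approach as the paper's proof: both commute the generator through the partial sum $1+T_1^{-1}+\cdots+T_a^{-1}\cdots T_1^{-1}$ via the quadratic and braid relations and then push the residual $T$-factors through $\Pi$ so that the invariance of $F$ kills them. You merely phrase the computation with $T_i-t$ instead of $T_i^{-1}$ and package the paper's three case-by-case identities into one uniform telescoped identity, which checks out.
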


\begin{proof}
Recall that $F(X)$ is $s_i$-invariant if and only if $T_i^{-1}\bullet F(X)=t^{-1}F(X)$. 

Since $a>0$, we have
\begin{align*}
	T_1^{-1}\widehat{\msf{S}}^{(m)}_{a}&=(T_1^{-1}+T_1^{-2}+T_1^{-1}T_2^{-1}T_1^{-1}+\cdots+T_a^{-1}\cdots T_3^{-1}T_1^{-1}T_2^{-1}T_1^{-1})\Pi\\
	&=t^{-1}(1+T_1^{-1})\Pi+(T_2^{-1}T_1^{-1}+\cdots+T_a^{-1}\cdots T_1^{-1})T_2^{-1}\Pi\\
	&=t^{-1}(1+T_1^{-1})\Pi+(T_2^{-1}T_1^{-1}+\cdots+T_a^{-1}\cdots T_1^{-1}) \Pi T_1^{-1}
\end{align*}
by Definition~\ref{def:AHA}. If we have $T_1^{-1}\bullet F(X)=t^{-1}F(X)$, then we obtain $T_1^{-1}\widehat{\msf{S}}^{(m)}_{a}\bullet F(X)=t^{-1}\widehat{\msf{S}}^{(m)}_{a}\bullet F(X)$. This is the first case.

When $1<i<a$, we have
\begin{align*}
	T_i^{-1}\widehat{\msf{S}}^{(m)}_{a}&=(T_i^{-1}+\cdots+T_i^{-1}T_{i-1}^{-1}\cdots T_1^{-1}+T_i^{-2}T_{i-1}^{-1}\cdots T_1^{-1}+\cdots +T_a^{-1}\cdots T_i^{-1}T_{i+1}^{-1}T_i^{-1}\cdots T_1^{-1})\Pi\\
	&=(1+T_1^{-1}+\cdots+T_{i-2}^{-1}\cdots T_1^{-1})\Pi T_{i-1}^{-1}+t^{-1}(T_{i-1}^{-1}\cdots T_1^{-1}+T_{i}^{-1}\cdots T_1^{-1})\Pi\\
	&\hspace{1em}+(T_{i+1}^{-1}\cdots T_1^{-1}+\cdots T_a^{-1}\cdots T_1^{-1})\Pi T_{i}^{-1},
\end{align*}
by Definition~\ref{def:AHA}. If we have $T_{i-1}^{-1}\bullet F(X)=T_i^{-1}\bullet F(X)=t^{-1}F(X)$, then we obtain $T_i^{-1}\widehat{\msf{S}}^{(m)}_{a}\bullet F(X)=t^{-1}\widehat{\msf{S}}^{(m)}_{a} F(X)$. This is the second case.

When $i=a$, we have
\begin{align*}
	T_a^{-1}\widehat{\msf{S}}^{(m)}_{a}=(1+T_1^{-1}+\cdots+T_{a-2}^{-1}\cdots T_1^{-1})\Pi T_{a-1}^{-1}+t^{-1}(T_{a-1}^{-1}\cdots T_1^{-1}+T_{a}^{-1}\cdots T_1^{-1})\Pi
\end{align*}
by Definition~\ref{def:AHA}. If we have $T_{a-1}^{-1}\bullet F(X)=t^{-1}F(X)$, then we obtain $T_a^{-1}\widehat{\msf{S}}^{(m)}_{a}\bullet F(X)=t^{-1}\widehat{\msf{S}}^{(m)}_{a}\bullet F(X)$. If we have $i>a+1$, then we have $T_i^{-1}\widehat{\msf{S}}^{(m)}_{a}=\widehat{\msf{S}}^{(m)}_{a}T_{i-1}^{-1}$ and hence the same implication holds. This is the third case.

This completes the proof.
\end{proof}

In particular, we obtain the following criterion for $\widehat{\msf{S}}^{(m)}_{m-1}\bullet F(X)$ to be symmetric.

\begin{corollary}
If $F\in\mathbb Q_{q,t} [X^{\pm1}]_{(m)}$ is invariant under $s_1,s_2,\ldots,s_{m-2}$, then $\widehat{\msf{S}}^{(m)}_{m-1}\bullet F(X)$ is symmetric.
\end{corollary}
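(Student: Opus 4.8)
The plan is to reduce the corollary directly to Lemma~\ref{Lem_Sinv} specialized at $a=m-1$. Recall from the discussion after Lemma~\ref{Lem_sym_to_sym} that a Laurent polynomial is symmetric precisely when it is invariant under each of $s_1,\ldots,s_{m-1}$. Hence it suffices to check that $\widehat{\msf{S}}^{(m)}_{m-1}\bullet F(X)$ is $s_i$-invariant for every $i\in\{1,\ldots,m-1\}$, using only the assumed invariance of $F$ under $s_1,\ldots,s_{m-2}$. The three cases of Lemma~\ref{Lem_Sinv} are tailored so that their index ranges, with $a=m-1$, partition exactly $\{1,\ldots,m-1\}$.

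Concretely, I would argue case by case. For $i=1$, case (i) applies, and its hypothesis (invariance of $F$ under $s_1$) holds since $1\le m-2$ whenever $m\ge 3$. For each $i$ with $2\le i\le m-2$, case (ii) applies because $1<i<m-1=a$; the invariances it demands, namely under $s_{i-1}$ and $s_i$, are supplied by the hypothesis since $i-1\ge 1$ and $i\le m-2$. For $i=m-1=a$, I would invoke the $i=a$ branch of case (iii), which only requires invariance of $F$ under $s_{i-1}=s_{m-2}$, again guaranteed. Collecting the three conclusions shows $\widehat{\msf{S}}^{(m)}_{m-1}\bullet F(X)$ is invariant under all of $s_1,\ldots,s_{m-1}$, i.e.\ under $\mf{S}_m$, hence symmetric.

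Since this is a direct corollary, I do not anticipate a genuine obstacle: the only bookkeeping is to confirm that $\{1\}\cup\{2,\ldots,m-2\}\cup\{m-1\}=\{1,\ldots,m-1\}$ and that every $s_j$-invariance invoked has $j\in\{1,\ldots,m-2\}$, both of which are immediate. The one point I would flag for care is the small-$m$ boundary. For $m\ge 3$ the argument above is complete. For $m=2$ the hypothesis on $F$ is vacuous and case (i) of the Lemma is not directly applicable as stated, but here $\widehat{\msf{S}}^{(2)}_{1}\bullet F=(1+T_1^{-1})\bullet(\Pi\bullet F)$, and the quadratic relation $(T_1-t)(T_1+1)=0$ yields $T_1^{-1}(1+T_1^{-1})=t^{-1}(1+T_1^{-1})$, so the result is automatically $s_1$-invariant; the case $m=1$ is trivial. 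Thus the statement holds for all $m$, with the main content being the clean index-range verification against Lemma~\ref{Lem_Sinv}.
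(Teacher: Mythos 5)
Your proof is correct and follows exactly the route the paper intends: the corollary is stated as an immediate specialization of Lemma~\ref{Lem_Sinv} at $a=m-1$, with the three cases of that lemma covering $i=1$, $1<i<m-1$, and $i=m-1$ respectively, and every invariance invoked landing in $\{s_1,\ldots,s_{m-2}\}$. Your extra check of the $m=2$ boundary via $T_1^{-1}(1+T_1^{-1})=t^{-1}(1+T_1^{-1})$ is a careful touch the paper leaves implicit, but does not change the argument.
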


\subsection{$(q,t)$-chromatic symmetric functions}

Now we define a $(q,t)$-analogue of chromatic symmetric functions for unit interval graphs. 

\begin{dfn}
	For $\msf{e}\in\bb{E}_n$ and $m\in\bb{Z}_{>0}$, we define an element $\msf{S}_{\msf{e}}^{(m)}\in\msc{H}_m$ by
\begin{align*}
	\msf{S}_{\msf{e}}^{(m)}\coloneqq t^{n(m-1)}\msf{S}_{1,\msf{e}(1)}^{(m)}\msf{S}_{2,\msf{e}(2)}^{(m)}\cdots \msf{S}_{n,\msf{e}(n)}^{(m)}\Pi^n.
\end{align*}
\end{dfn}

We note that by 
\begin{align*}
\msf{S}_{i,e}^{(m)}=\Pi^{i-1}\msf{S}_{1,e-i+1}^{(m)}\Pi^{-i+1},
\end{align*}
we can also write
\begin{align*}
	\msf{S}_{\msf{e}}^{(m)}= t^{n(m-1)}\widehat{\msf{S}}_{m-1-\msf{a}(1)}^{(m)}\widehat{\msf{S}}_{m-1-\msf{a}(2)}^{(m)}\cdots\widehat{\msf{S}}_{m-1-\msf{a}(n)}^{(m)},
\end{align*}
where $\msf{a}\in\bb{A}_n$ is the area sequence corresponding to $\msf{e}\in\bb{E}_n$, i.e., $\msf{a}(i)=i-1-\msf{e}(i)$. We will show in Theorem~\ref{Thm_stability} that every $F=\varprojlim_{m}\left(F^{(m)}\right)\in\Lambda_{q,t}$ gives rise to an element
\begin{equation}
	\msf{S}_{\msf{e}}(F)\coloneqq\varprojlim_{m}\left(\msf{S}_{\msf{e}}^{(m)}\bullet F^{(m)}\right)\in\Lambda_{q,t}.\label{eqn:preStab}
\end{equation}
Assuming the existence of (\ref{eqn:preStab}), we make the following definition:

\begin{dfn}
For a unit interval graph $\Gamma$ corresponding to $\msf{e}\in\bb{E}_n$, we define  
\begin{align*}
	\mb{X}_{\Gamma}(q,t)\coloneqq \msf{S}_{\msf{e}}(1)\in\Lambda_{q,t}
\end{align*}
and call it the $(q,t)$\textit{-chromatic symmetric function} for $\Gamma$. We also define
\begin{align*}
\mb{X}_{\Gamma}^{(m)}(q,t) \coloneqq \msf{S}_{\msf{e}}^{(m)}\bullet 1 
\end{align*}
for each $m \in \bb{Z}_{>0}$ and call it the $m$-truncated $(q,t)$-chromatic symmetric function for $\Gamma$.
\end{dfn}

\begin{exa}\label{ex:small}
When $n=3$ and $\msf{e} = (0,0,1)$, the corresponding area sequence is $\msf{a}=(0,1,1)$ and the corresponding graph $\Gamma$ is a Dynkin diagram of type $A_3$. We have
\begin{align*}
\mb{X}_{\Gamma}^{(2)}(q,t) & = t^3 (1+T_1^{-1})\Pi^3 \bullet1  = q^{-1}t^2 (X_1+X_2)X_1X_2 \equiv q^{-1}t^2 e_{2,1}(X) \mod (X_3,X_4,\ldots)\\
\mb{X}_{\Gamma}^{(3)}(q,t) & = t^6 (1+T_1^{-1}+T_2^{-1} T_1^{-1})\Pi (1+T_1^{-1})\Pi(1+T_1^{-1})\Pi \bullet1\\
& = t^5 (1+T_1^{-1}+T_2^{-1} T_1^{-1}) \Pi (1+T_1^{-1}) \bullet X_1(X_2 + X_3)\\
& = t^4 (1+T_1^{-1}+T_2^{-1} T_1^{-1}) \Pi \bullet\left( X_1 X_2 + X_2 X_3 + X_1 X_3 + t X_1 X_2\right)\\
& \equiv q^{-1}t^2 ( e_{2,1}(X) + (1+t+t^2)(-1+q+qt) e_{3}(X) ) \mod (X_4,X_5,\ldots) \\
\mb{X}_{\Gamma}^{(4)}(q,t) & = t^9  (1+T_1^{-1}+T_2^{-1} T_1^{-1}+T_3^{-1}T_2^{-1} T_1^{-1}) \Pi (1+T_1^{-1}+T_2^{-1}T_1^{-1})\Pi(1+T_1^{-1}+T_{2}^{-1}T_1^{-1})\Pi \bullet1\\
&  \equiv q^{-1}t^2 (e_{2,1}(X) + (1+t+t^2)(-1+q+qt) e_{3}(X)) \mod (X_5,X_6,\ldots)
\end{align*}
We observe that the resulting polynomials are stable, and also not $e$-positive in a naive sense.
\end{exa}

\begin{remark}
By our convention on the operator $\msf{S}^{(m)}_{i,e}$ in Definition~\ref{Def_ps}, the $(q,t)$-chromatic symmetric function $\mb{X}_{\Gamma}^{(m)}(q,t)$ for $\Gamma$ corresponding to $\msf{e}\in\bb{E}_n$ automatically vanishes whenever $\msf{e}(i) < i - m$ for some $1 \leq i \leq m$. 
\end{remark}

\subsection{Symmetricity}

In this section, we prove that $\mb{X}_{\Gamma}^{(m)}(q,t)$ is symmetric for any unit interval graph $\Gamma$. 

\begin{lemma}\label{Lem_Sesym}
For any $\msf{e}\in\bb{E}_n$ and $F(X)\in\bb{Q}_{q,t}[X^{\pm1}]_{(m)}$ which is invariant under $s_1,\ldots, s_{m-2}$, $\msf{S}^{(m)}_{\msf{e}}\bullet F(X)$ is symmetric.
\end{lemma}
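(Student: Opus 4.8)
The plan is to use the second expression for $\msf{S}^{(m)}_{\msf{e}}$ recorded in the text,
\begin{align*}
	\msf{S}^{(m)}_{\msf{e}}=t^{n(m-1)}\widehat{\msf{S}}^{(m)}_{p_1}\widehat{\msf{S}}^{(m)}_{p_2}\cdots\widehat{\msf{S}}^{(m)}_{p_n},\qquad p_k\coloneqq m-1-\msf{a}(k),
\end{align*}
where $\msf{a}\in\bb{A}_n$ is the area sequence of $\msf{e}$, and to analyze the result by applying the factors one at a time from the right. I set $G_{n+1}\coloneqq F(X)$ and $G_k\coloneqq\widehat{\msf{S}}^{(m)}_{p_k}\bullet G_{k+1}$ for $k=n,\ldots,1$, so that $\msf{S}^{(m)}_{\msf{e}}\bullet F(X)=t^{n(m-1)}G_1$; since $t^{n(m-1)}$ is a scalar it suffices to show $G_1$ is symmetric. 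If some $p_k<0$, the corresponding factor is $0$ and the product vanishes, so I may assume $p_k\ge0$ throughout.

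The heart of the argument is the downward induction on $k$ proving that $G_k$ is invariant under $s_1,\ldots,s_{p_k}$ (equivalently, symmetric in $X_1,\ldots,X_{p_k+1}$). The point is that this is exactly the range of invariances that Lemma~\ref{Lem_Sinv} propagates through one application of $\widehat{\msf{S}}^{(m)}_{p_k}$: that lemma controls every index except the single value $i=p_k+1$, and since I only claim invariance up to $s_{p_k}$, this gap is harmless. Reading Lemma~\ref{Lem_Sinv} off, to obtain $s_i$-invariance of $G_k$ for each $i\le p_k$ I need $G_{k+1}$ to be invariant under $s_{i-1}$ and $s_i$, i.e.\ under $s_1,\ldots,s_{p_k-1}$. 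The engine making this fit is the defining inequality of area sequences, $\msf{a}(k+1)\le\msf{a}(k)+1$, which translates into
\begin{align*}
	p_k-1\le p_{k+1}.
\end{align*}
Hence the invariances required of $G_{k+1}$ lie inside the inductively available range $s_1,\ldots,s_{p_{k+1}}$, and the step goes through. The base case $k=n$ uses that $F=G_{n+1}$ is invariant under $s_1,\ldots,s_{m-2}\supseteq s_1,\ldots,s_{p_n-1}$.

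Finally, because $\msf{a}(1)=0$ always, the last factor has $p_1=m-1$, so the claim for $k=1$ says that $G_1$ is invariant under $s_1,\ldots,s_{m-1}$, i.e.\ fully symmetric; equivalently one may invoke the corollary stated just after Lemma~\ref{Lem_Sinv} at this step, using that $G_2$ is invariant under $s_1,\ldots,s_{p_2}$ with $p_2=m-1-\msf{a}(2)\ge m-2$. This completes the proof.

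I expect the main obstacle to be pinning down the correct inductive invariant. The naive guess ``$G_k$ is invariant under all of $s_1,\ldots,s_{m-2}$'' is \emph{not} preserved by a single $\widehat{\msf{S}}^{(m)}_{p_k}$, precisely because of the uncontrolled gap at $s_{p_k+1}$; everything hinges on realizing that the weaker, operator-indexed invariant ``$G_k$ is invariant under $s_1,\ldots,s_{p_k}$'' is simultaneously preserved and strong enough to finish. A secondary technical point concerns the degenerate steps with $p_k=0$, where $\widehat{\msf{S}}^{(m)}_{p_k}=\Pi$ and Lemma~\ref{Lem_Sinv} does not literally apply: there the claim is vacuous, while for the next factor (of index $p_{k-1}\le p_k+1=1$) the case $p_{k-1}=0$ is again vacuous and the case $p_{k-1}=1$ is handled by the identity $T_1^{-1}(1+T_1^{-1})=t^{-1}(1+T_1^{-1})$, which forces every element of the image of $1+T_1^{-1}$ to be $s_1$-invariant and thereby restores the invariant.
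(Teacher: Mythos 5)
Your proof is correct and follows the same overall strategy as the paper's: peel off the factors $\widehat{\msf{S}}^{(m)}_{m-1-\msf{a}(k)}$ from the right, propagate partial $s_i$-invariance through each factor via Lemma~\ref{Lem_Sinv}, and absorb the degenerate step with the identity $T_1^{-1}(1+T_1^{-1})=t^{-1}(1+T_1^{-1})$. The one substantive difference is the inductive invariant. The paper introduces an auxiliary nondecreasing sequence $k_1,\ldots,k_{n+1}$ satisfying $\msf{a}(j)\le k_j$ and only proves that $F_j$ is invariant under $s_1,\ldots,s_{m-1-k_j}$, whereas you prove the stronger and more directly indexed statement that $G_k$ is invariant under $s_1,\ldots,s_{m-1-\msf{a}(k)}$, closing the induction with nothing more than the area-sequence inequality $\msf{a}(k+1)\le\msf{a}(k)+1$, i.e.\ $p_k-1\le p_{k+1}$. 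I checked the three cases $i=1$, $1<i<p_k$, and $i=p_k$ against Lemma~\ref{Lem_Sinv}: for $i=p_k$ only $s_{p_k-1}$-invariance of $G_{k+1}$ is required, which is exactly what makes the uncontrolled index $i=p_k+1$ harmless, and all required invariances of $G_{k+1}$ lie in the range $s_1,\ldots,s_{p_k-1}\subseteq s_1,\ldots,s_{p_{k+1}}$; the base case and the $p_k\in\{0,1\}$ degeneracies are handled as you describe. So your version dispenses with the paper's bookkeeping sequence at no cost. The only wording to tighten is the clause ``I need $G_{k+1}$ to be invariant under $s_{i-1}$ and $s_i$'': read literally over all $i\le p_k$ this would demand $s_{p_k}$-invariance of $G_{k+1}$, which may fail; your subsequent ``i.e.\ under $s_1,\ldots,s_{p_k-1}$'' records the correct requirement, so this is a phrasing issue rather than a gap.
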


\begin{proof}
Let $\msf{a}\in\bb{A}_n$ be the area sequence corresponding to $\msf{e}\in\bb{E}_n$. We inductively construct a nondecreasing sequence of integers $k_1,\ldots,k_{n+1}$ by $k_1=0$ and 
\begin{align*}
	k_{j+1}=\begin{cases}
		k_{j}&\mbox{ if }\msf{a}(j)<k_j,\\
		k_{j}+1&\mbox{ if }\msf{a}(j)=k_{j},
	\end{cases}
\end{align*}
for $1\leq j\leq n$. We note that $\msf{a}(j)\leq k_j$ holds for any $j$. Since we should have $\msf{a}(1)=0=k_1$, we have $k_2=1$ and hence $k_{n+1}\geq1$.

We set 
\begin{align}\label{Eqn_Fj}
	F_j(X)\coloneqq\widehat{\msf{S}}^{(m)}_{m-1-\msf{a}(j)}\widehat{\msf{S}}^{(m)}_{m-1-\msf{a}(j+1)}\cdots\widehat{\msf{S}}^{(m)}_{m-1-\msf{a}(n)}\bullet F(X)
\end{align}
for each $1\leq j\leq n$ and $F_{n+1}(X)\coloneqq F(X)$. We examine the symmetry of 
\begin{align*}
	F_j(X)=\widehat{\msf{S}}^{(m)}_{m-1-\msf{a}(j)}\bullet F_{j+1}(X)
\end{align*}
in accordance with the values of $k_{j+1}$. By assumption and $k_{n+1}\geq1$, $F_{n+1}(X)$ is invariant under $s_1,\ldots,s_{m-1-k_{n+1}}$. By descending induction on $j$, we show that $F_j(X)$ is invariant under $s_1,\ldots,s_{m-1-k_{j}}$.

Assume that $F_{j+1}(X)$ is invariant under $s_1,\ldots,s_{m-1-k_{j+1}}$ and $k_{j+1}<m-1$. In this case, we note that 
\begin{align*}
	m-1-\msf{a}(j)\geq m-1-k_j\geq  m-1-k_{j+1}>0.
\end{align*}
Since $k_{j+1}<m-1$, $F_{j+1}(X)$ is $s_1$-invariant. Hence by Lemma~\ref{Lem_Sinv}, $F_j(X)$ is $s_1$-invariant. 

If $1<i<m-1-k_j\leq m-1-\msf{a}(j)$, then we have $1<i\leq m-1-k_{j+1}$ by the construction of $k_{j+1}$ and hence $F_{j+1}(X)$ is invariant under $s_{i-1}$ and $s_i$. By Lemma~\ref{Lem_Sinv}, $F_j(X)$ is $s_i$-invariant.

If $i=m-1-k_j<m-1-\msf{a}(j)$, then we have $k_{j+1}=k_{j}$ and hence  $F_{j+1}(X)$ is invariant under $s_{i-1}$ and $s_i$. By Lemma~\ref{Lem_Sinv}, $F_j(X)$ is $s_i$-invariant.

If $i=m-1-k_j=m-1-\msf{a}(j)$, then we have $k_{j+1}=k_{j}+1$ and hence  $F_{j+1}(X)$ is $s_{i-1}$-invariant. By Lemma~\ref{Lem_Sinv}, $F_j(X)$ is $s_i$-invariant.

Therefore, $F_j(X)$ is invariant under $s_1,\ldots,s_{m-1-k_j}$ in this case.

Assume now that $k_{j+1}\geq m-1$. If $k_j\geq m-1$, we have nothing to prove. Hence we may assume $k_{j+1}=m-1$ and $k_j=m-2$. In this case, we have $\msf{a}(j)=m-2$ and hence 
\begin{align*}
	F_j(X)=\widehat{\msf{S}}^{(m)}_{1}\bullet F_{j+1}(X)=(1+T_1^{-1})\Pi\bullet F_{j+1}(X)
\end{align*}
is $s_1$-invariant without any condition on $F_{j+1}(X)$.  

Therefore, $t^{n(m-n)}F_1(X)=\msf{S}^{(m)}_{\msf{e}}\bullet F(X)$ is invariant under $s_1,\ldots,s_{m-1-k_1}=s_{m-1}$ and hence symmetric. This completes the proof.
\end{proof}

\begin{corollary}\label{Cor_symm}
For any unit interval graph $\Gamma$, we have $\mb{X}_{\Gamma}^{(m)}(q,t)\in\Lambda_{q,t}^{(m)}$.
\end{corollary}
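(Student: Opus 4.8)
The plan is to obtain this immediately from Lemma~\ref{Lem_Sesym} by specializing to $F(X)=1$, together with a short check that the output is a genuine polynomial rather than merely a Laurent polynomial.

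First I would recall that $\mb{X}_{\Gamma}^{(m)}(q,t)=\msf{S}^{(m)}_{\msf{e}}\bullet 1$, where $\msf{e}\in\bb{E}_n$ is the sequence attached to $\Gamma$. The constant $1\in\bb{Q}_{q,t}[X^{\pm1}]_{(m)}$ is fixed by every $s_i$, in particular by $s_1,\ldots,s_{m-2}$. Hence Lemma~\ref{Lem_Sesym} applies verbatim with $F(X)=1$ and shows that $\msf{S}^{(m)}_{\msf{e}}\bullet 1$ is symmetric, i.e.\ $\mf{S}_m$-invariant.

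Second I would verify polynomiality. Using the factorization $\msf{S}^{(m)}_{\msf{e}}=t^{n(m-1)}\widehat{\msf{S}}^{(m)}_{m-1-\msf{a}(1)}\cdots\widehat{\msf{S}}^{(m)}_{m-1-\msf{a}(n)}$ from the discussion after Definition~\ref{Def_ps}, each factor $\widehat{\msf{S}}^{(m)}_{a}=(1+T_1^{-1}+\cdots+T_a^{-1}\cdots T_1^{-1})\Pi$ is assembled solely from $\Pi$ and from $T_i^{-1}$ with $i\in\{1,\ldots,a\}$, all of index $\neq 0$ (note $a\le m-1$ here, so $T_0^{-1}$ never appears). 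By Lemma~\ref{Lem_AHA_action} each such $T_i^{-1}$ preserves $\bb{Q}_{q,t}[X]_{(m)}$, and by (\ref{Eqn_commPiX}) so does $\Pi$, since $\Pi\bullet F=X_1F(X_2,\ldots,X_m,X_{m+1})$ with $X_{m+1}=q^{-1}X_1$ sends a polynomial to a polynomial. Therefore each $\widehat{\msf{S}}^{(m)}_{a}$ preserves $\bb{Q}_{q,t}[X]_{(m)}$, and applying the whole product (and the scalar $t^{n(m-1)}$) to $1\in\bb{Q}_{q,t}[X]_{(m)}$ keeps us inside $\bb{Q}_{q,t}[X]_{(m)}$.

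Combining the two observations, $\msf{S}^{(m)}_{\msf{e}}\bullet 1$ is a symmetric polynomial, hence lies in $\bb{Q}_{q,t}[X]_{(m)}^{\mf{S}_m}=\Lambda^{(m)}_{q,t}$, which is the claim. I do not expect any genuine obstacle: the real content is already packaged into Lemma~\ref{Lem_Sesym}, and all that remains beyond a direct specialization is the polynomiality bookkeeping, which is routine given Lemma~\ref{Lem_AHA_action} and (\ref{Eqn_commPiX}). The only point deserving a moment's care is that $s_0$ and $\Pi$ act in the twisted (level one) manner, so one wants to confirm that $\msf{S}^{(m)}_{\msf{e}}$ never invokes $T_0^{-1}$; this is transparent from the formula for $\widehat{\msf{S}}^{(m)}_{a}$, whose Hecke generators all carry index between $1$ and $a\le m-1$.
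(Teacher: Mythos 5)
Your argument is correct and is essentially the paper's own proof, which simply invokes Lemma~\ref{Lem_Sesym} with $F(X)=1$. Your additional check that $\msf{S}^{(m)}_{\msf{e}}\bullet 1$ lies in $\bb{Q}_{q,t}[X]_{(m)}$ rather than merely in $\bb{Q}_{q,t}[X^{\pm1}]_{(m)}$ (via Lemma~\ref{Lem_AHA_action} and the formula for $\Pi$, noting that $T_0^{\pm1}$ never occurs in $\widehat{\msf{S}}^{(m)}_{a}$ for $0\leq a\leq m-1$) is a correct piece of bookkeeping that the paper leaves implicit.
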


\begin{proof}
	This follows from Lemma~\ref{Lem_Sesym} for $F(X)=1$.
\end{proof}

As we discussed in the introduction, we can obtain the multiplicativity of the $m$-truncated $(q,t)$-chromatic symmetric functions from this result.

\begin{corollary}\label{Cor_fact}
For any two unit interval graphs $\Gamma$ and $\Gamma'$, we have
\begin{align*}
\mb{X}_{\Gamma\cup\Gamma'}^{(m)}(q,t)=\mb{X}_{\Gamma}^{(m)}(q,t) \star \mb{X}_{\Gamma'}^{(m)}(q,t).
\end{align*}
\end{corollary}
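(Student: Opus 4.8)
The plan is to reduce the statement to two ingredients: an algebra-level factorization $\msf{S}_{\msf{e}\cup\msf{e}'}^{(m)}=\msf{S}_{\msf{e}}^{(m)}\msf{S}_{\msf{e}'}^{(m)}$ inside $\msc{H}_m$, and the fact (Lemma~\ref{Lem_star_commute}) that quantum multiplication by a symmetric element commutes with the $\msc{H}_m$-action. Write $\Gamma=\Gamma_{\msf{e}}$ with $\msf{e}\in\bb{E}_n$ and $\Gamma'=\Gamma_{\msf{e}'}$ with $\msf{e}'\in\bb{E}_{n'}$, so that by Definition~\ref{Def_concat} the graph $\Gamma\cup\Gamma'$ corresponds to $\msf{e}\cup\msf{e}'\in\bb{E}_{n+n'}$.

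First I would establish the operator factorization. Using the conjugation identity $\msf{S}_{i,e}^{(m)}=\Pi^{i-1}\msf{S}_{1,e-i+1}^{(m)}\Pi^{-i+1}$ together with $\Pi T_i=T_{i+1}\Pi$ from Definition~\ref{def:AHA}, one checks for each $1\le j\le n'$ the commutation
\begin{align*}
\Pi^{n}\msf{S}_{j,\msf{e}'(j)}^{(m)}=\msf{S}_{n+j,\,n+\msf{e}'(j)}^{(m)}\Pi^{n},
\end{align*}
since both sides equal $\Pi^{n+j-1}\msf{S}_{1,\msf{e}'(j)-j+1}^{(m)}\Pi^{-(j-1)}$. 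Sliding $\Pi^{n}$ to the right through the product $\msf{S}_{1,\msf{e}'(1)}^{(m)}\cdots\msf{S}_{n',\msf{e}'(n')}^{(m)}$ thus converts each lower index $j$ into $n+j$ and each value $\msf{e}'(j)$ into $n+\msf{e}'(j)$, which is precisely the pattern prescribing $\msf{e}\cup\msf{e}'$. Matching the scalar prefactors $t^{n(m-1)}t^{n'(m-1)}=t^{(n+n')(m-1)}$ and the powers $\Pi^{n}\Pi^{n'}=\Pi^{n+n'}$ then gives $\msf{S}_{\msf{e}\cup\msf{e}'}^{(m)}=\msf{S}_{\msf{e}}^{(m)}\msf{S}_{\msf{e}'}^{(m)}$.

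With this in hand, the rest is the formal manipulation sketched in the introduction. By Corollary~\ref{Cor_symm}, $\mb{X}_{\Gamma'}^{(m)}(q,t)=\msf{S}_{\msf{e}'}^{(m)}\bullet 1$ is symmetric, so Lemma~\ref{Lem_star_commute} applies to it. Combining commutativity of $\star$, the fact that $1$ is the $\star$-unit, and that $\mb{X}_{\Gamma'}^{(m)}(q,t)\star(-)$ commutes with the action of $\msf{S}_{\msf{e}}^{(m)}\in\msc{H}_m$, I would compute
\begin{align*}
\mb{X}_{\Gamma}^{(m)}(q,t)\star\mb{X}_{\Gamma'}^{(m)}(q,t)
&=\mb{X}_{\Gamma'}^{(m)}(q,t)\star\bigl(\msf{S}_{\msf{e}}^{(m)}\bullet 1\bigr)\\
&=\msf{S}_{\msf{e}}^{(m)}\bullet\bigl(\mb{X}_{\Gamma'}^{(m)}(q,t)\star 1\bigr)\\
&=\msf{S}_{\msf{e}}^{(m)}\bullet\bigl(\msf{S}_{\msf{e}'}^{(m)}\bullet 1\bigr)
=\bigl(\msf{S}_{\msf{e}}^{(m)}\msf{S}_{\msf{e}'}^{(m)}\bigr)\bullet 1
=\msf{S}_{\msf{e}\cup\msf{e}'}^{(m)}\bullet 1=\mb{X}_{\Gamma\cup\Gamma'}^{(m)}(q,t).
\end{align*}

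I expect essentially all of the genuine work to lie in the operator factorization step; once the $\Pi$-conjugation bookkeeping is carried out, the second half is forced by Lemma~\ref{Lem_star_commute}. The one point to verify carefully is that the identity $\msf{S}_{i,e}^{(m)}=\Pi^{i-1}\msf{S}_{1,e-i+1}^{(m)}\Pi^{-i+1}$ remains valid when the index $i$ exceeds $m$, since the indices of the $T_i$ are read in $\bb{Z}/m\bb{Z}$; this is immediate because $\Pi T_k=T_{k+1}\Pi$ holds as a relation in $\msc{H}_m$ regardless of how the index reduces modulo $m$, so no additional care about the braid relations is needed.
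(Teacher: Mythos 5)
Your proposal is correct and follows essentially the same route as the paper: the operator factorization $\msf{S}_{\msf{e}\cup\msf{e}'}^{(m)}=\msf{S}_{\msf{e}}^{(m)}\msf{S}_{\msf{e}'}^{(m)}$ (which the paper asserts directly from Definition~\ref{Def_concat} and which you verify in more detail via $\Pi$-conjugation), followed by the formal computation using Corollary~\ref{Cor_symm} and Lemma~\ref{Lem_star_commute}. The extra care you take with the conjugation identity $\Pi^{n}\msf{S}_{j,\msf{e}'(j)}^{(m)}\Pi^{-n}=\msf{S}_{n+j,\,n+\msf{e}'(j)}^{(m)}$ is sound and merely makes explicit what the paper leaves implicit.
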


\begin{proof}
Let $\msf{e}\in\bb{E}_n$ (resp. $\msf{e}'\in\bb{E}_{n'}$) correspond to unit interval graph $\Gamma$ (resp. $\Gamma'$). By the description of concatenation in Definition~\ref{Def_concat}, we obtain
\begin{align*}
	\msf{S}_{\msf{e}\cup\msf{e}'}^{(m)}=\msf{S}_{\msf{e}}^{(m)}\msf{S}_{\msf{e}'}^{(m)}
\end{align*}
as elements in $\msc{H}_m$. By Lemma~\ref{Lem_star_commute} and Corollary~\ref{Cor_symm}, we obtain
\begin{align*}
\mb{X}_{\Gamma\cup\Gamma'}^{(m)}(q,t)=\msf{S}_{\msf{e}\cup\msf{e}'}^{(m)}\bullet 1	=\msf{S}_{\msf{e}}^{(m)}\bullet(1\star \mb{X}_{\Gamma'}^{(m)}(q,t))=\mb{X}_{\Gamma}^{(m)}(q,t)\star \mb{X}_{\Gamma'}^{(m)}(q,t)
\end{align*}
as required.
\end{proof}

\subsection{Complete graphs}

In this section, we compute the $(q,t)$-chromatic symmetric functions for complete graphs, i.e., when $\msf{e}=\msf{e}_n\in\bb{E}_n$ in the notation of Example~\ref{Exa_complete_graph}.

\begin{lemma}\label{Lem_Selem}
For each $0\leq a\leq b<m$ and $r\in\bb{Z}_{\geq 0}$, we have
\begin{align}
	\widehat{\msf{S}}^{(m)}_{a}\bullet e_r(X_1,\ldots,X_b)=t^{-a}\sum_{k=0}^{a}[k+1]_t e_{k+1}(X_1,\ldots,X_{a+1})e_{r-k}(X_{a+2},\ldots,X_{b+1})\label{eqn:Sae}
\end{align}
in $\bb{Q}_{q,t} [X^{\pm1}]_{(m)}$. Here, we understand that $e_{k}(X_{a+2},\ldots,X_{b+1})=\delta_{k,0}$ when $a=b$.
\end{lemma}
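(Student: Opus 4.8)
The plan is to compute directly, exploiting that for $b<m$ no ``wrap-around'' occurs: since $\Pi\bullet e_r(X_1,\ldots,X_b)=X_1e_r(X_2,\ldots,X_{b+1})$ with $b+1\leq m$, and the remaining operators $T_1^{-1},\ldots,T_a^{-1}$ (recall $a<m$) act as ordinary finite Hecke generators, the whole computation stays inside $\bb{Q}_{q,t}[X_1,\ldots,X_m]$ and is free of $q$, matching the $q$-free right-hand side of (\ref{eqn:Sae}). First I would record two structural features of $\widehat{\msf{S}}^{(m)}_{a}\bullet e_r(X_1,\ldots,X_b)$ that pin down the shape of the answer. It is \emph{multiplicity-free} (degree $\leq 1$ in each variable): $X_1e_r(X_2,\ldots,X_{b+1})$ is, and by Lemma~\ref{Lem_AHA_action} each $T_j^{-1}$ sends a monomial with $X_j,X_{j+1}$-exponents in $\{0,1\}$ to a combination of such. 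It is also symmetric in $X_1,\ldots,X_{a+1}$ (by Lemma~\ref{Lem_Sinv}, using that $e_r(X_1,\ldots,X_b)$ is $s_1,\ldots,s_{b-1}$-invariant and $a\leq b$) and in $X_{a+2},\ldots,X_{b+1}$ (these variables are untouched by $T_1^{-1},\ldots,T_a^{-1}$). A multiplicity-free polynomial bisymmetric in these two blocks is necessarily a linear combination of products $e_i(X_1,\ldots,X_{a+1})e_j(X_{a+2},\ldots,X_{b+1})$, and homogeneity of degree $r+1$ forces $i+j=r+1$; reindexing, the lemma reduces to identifying the scalars in front of $e_{k+1}(X_1,\ldots,X_{a+1})e_{r-k}(X_{a+2},\ldots,X_{b+1})$ as $t^{-a}[k+1]_t$.

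To organize the variable count $b$, I would prove a \emph{spectator recursion}: for $a<b<m$ and $G\in\bb{Q}_{q,t}[X_1,\ldots,X_{b-1}]$,
\begin{align*}
	\widehat{\msf{S}}^{(m)}_{a}\bullet(X_bG)=X_{b+1}\,\widehat{\msf{S}}^{(m)}_{a}\bullet G,
\end{align*}
which holds because $\Pi\bullet(X_bG)=X_{b+1}\cdot(\Pi\bullet G)$ and $X_{b+1}$ (with $b+1>a+1$) is untouched by $T_1^{-1},\ldots,T_a^{-1}$. Applying it to $e_r(X_1,\ldots,X_b)=e_r(X_1,\ldots,X_{b-1})+X_be_{r-1}(X_1,\ldots,X_{b-1})$ gives
\begin{align*}
	\widehat{\msf{S}}^{(m)}_{a}\bullet e_r(X_1,\ldots,X_b)=\widehat{\msf{S}}^{(m)}_{a}\bullet e_r(X_1,\ldots,X_{b-1})+X_{b+1}\,\widehat{\msf{S}}^{(m)}_{a}\bullet e_{r-1}(X_1,\ldots,X_{b-1}),
\end{align*}
which is exactly the recursion satisfied by the right-hand side of (\ref{eqn:Sae}) under the splitting $e_{r-k}(X_{a+2},\ldots,X_{b+1})=e_{r-k}(X_{a+2},\ldots,X_b)+X_{b+1}e_{r-k-1}(X_{a+2},\ldots,X_b)$. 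A downward induction on $b$ therefore reduces everything to the diagonal case $b=a$, where the formula reads $\widehat{\msf{S}}^{(m)}_{a}\bullet e_r(X_1,\ldots,X_a)=t^{-a}[r+1]_te_{r+1}(X_1,\ldots,X_{a+1})$ for $0\leq r\leq a$; note the same recursion expresses the first off-diagonal value $\widehat{\msf{S}}^{(m)}_{a}\bullet e_r(X_1,\ldots,X_{a+1})$ in terms of diagonal values at the same level.

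The diagonal case I would prove by induction on $a$ (with $a=0,1$ checked by hand). Splitting off the last summand of $\msf{S}^{(m)}_{1,a-m+1}$ gives
\begin{align*}
	\widehat{\msf{S}}^{(m)}_{a}\bullet e_r(X_1,\ldots,X_a)=\widehat{\msf{S}}^{(m)}_{a-1}\bullet e_r(X_1,\ldots,X_a)+T_a^{-1}\bullet L_{a-1},
\end{align*}
where $L_{a-1}=\widehat{\msf{S}}^{(m)}_{a-1}\bullet e_r(X_1,\ldots,X_a)-\widehat{\msf{S}}^{(m)}_{a-2}\bullet e_r(X_1,\ldots,X_a)$. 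By the induction hypothesis, evaluated at $b=a$ (where the second elementary factor has one or two variables and so collapses), both terms of $L_{a-1}$ are explicit short sums of $e_s(X_1,\ldots,X_{a-1})$ times monomials in $X_a,X_{a+1}$. One then expands $L_{a-1}$ into the four monomial types $1,X_a,X_{a+1},X_aX_{a+1}$, applies $T_a^{-1}$ to each via Lemma~\ref{Lem_AHA_action}, and collects against $e_{r+1}(X_1,\ldots,X_{a+1})=e_{r+1}+(X_a+X_{a+1})e_r+X_aX_{a+1}e_{r-1}$ (in $X_1,\ldots,X_{a-1}$). By the reduction of the first paragraph the answer is a scalar multiple of $e_{r+1}(X_1,\ldots,X_{a+1})$, so it suffices to match the part free of $X_a,X_{a+1}$: only the top ($k=r$) term of the hypothesis contributes there, and the $t$-powers telescope via $t^{-(a-1)}+(t^{-a}-t^{-(a-1)})=t^{-a}$, yielding the scalar $t^{-a}[r+1]_t$.

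The main obstacle is this diagonal computation: although finite, it demands careful bookkeeping of how $T_a^{-1}$ redistributes the monomials in $X_a,X_{a+1}$ and of the accompanying $t$-power arithmetic, entirely parallel to (and about as involved as) the proof of Lemma~\ref{Lem_partial_Pieri}. The multiplicity-free/bisymmetry reduction is what keeps this manageable, since it lets us verify a single scalar identity rather than a full polynomial one.
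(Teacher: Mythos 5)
Your argument is correct in outline but organized quite differently from the paper's. The paper fixes $b$ and runs a single computational induction on $a$: it isolates $T_a^{-1}\cdots T_1^{-1}\Pi\bullet e_r$ as the difference of consecutive cases, rewrites it so that only the $X_{a+1},X_{a+2}$-dependence is exposed, applies $T_{a+1}^{-1}$ term by term, and verifies the full polynomial identity at each step by direct manipulation of the $e^{(a)}_k e'^{(a')}_{k'}$ expansions. You instead (i) use multiplicity-freeness (via Lemma~\ref{Lem_AHA_action}) and block symmetry (via Lemma~\ref{Lem_Sinv}) to pin the answer down to a linear combination of the products $e_{k+1}(X_1,\ldots,X_{a+1})e_{r-k}(X_{a+2},\ldots,X_{b+1})$, (ii) reduce general $b$ to the diagonal $b=a$ by the spectator recursion $\widehat{\msf{S}}^{(m)}_{a}\bullet(X_bG)=X_{b+1}\widehat{\msf{S}}^{(m)}_{a}\bullet G$, and (iii) identify a single scalar by induction on $a$. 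This buys a much lighter final verification (one coefficient instead of a full polynomial identity) at the price of the a priori structure argument; the paper's route is heavier on bookkeeping but entirely self-contained and needs no shape lemma. Both the spectator recursion and the telescoping $t^{-(a-1)}+(t^{-a}-t^{-(a-1)})=t^{-a}$ check out.

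Two small points need patching. First, your appeal to Lemma~\ref{Lem_Sinv} for symmetry in $X_1,\ldots,X_{a+1}$ cites only the $s_1,\ldots,s_{b-1}$-invariance of $e_r(X_1,\ldots,X_b)$; in the edge case $a=b=1$, $r=1$ the input $X_1$ is not $s_1$-invariant, so case (i) does not apply and you must either invoke case (iii) with the $s_0$-invariance of $X_1$ or note the identity $T_1^{-1}(1+T_1^{-1})=t^{-1}(1+T_1^{-1})$ directly. Second, in the diagonal step your criterion ``match the part free of $X_a,X_{a+1}$'' is vacuous when $r\geq a-1$, since $e_{r+1}(X_1,\ldots,X_{a-1})=0$ there; because the answer is already known to be a scalar multiple of $e_{r+1}(X_1,\ldots,X_{a+1})$ you can match the coefficient of any single nonzero monomial instead, but as written those two cases are not covered.
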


\begin{proof}
For $a=0$, we have
\begin{align*}
	\widehat{\msf{S}}^{(m)}_{0}\bullet e_r(X_1,\ldots,X_b)=\Pi\bullet e_r(X_1,\ldots,X_b)=X_1e_r(X_2,\ldots,X_{b+1}).
\end{align*}
This is the (\ref{eqn:Sae}) for $a=0$, and hence the assertion holds in this case. 


We prove the assertion by induction on $a$. In order to shorten the expressions, we write $e^{(a)}_k=e_k(X_1,\ldots,X_a)$ and $e'^{(a)}_{k}=e_k(X_{a},\ldots,X_{b+1})$ in the rest of this proof. By induction, it is enough to show that the equation (\ref{eqn:Sae}) for $a-1$ subtracted from the equation (\ref{eqn:Sae}) for $a$ holds, i.e.,
\begin{align}\nonumber
T_a^{-1}\cdots T_1^{-1}\Pi\bullet e_r(X_1,\ldots,X_b)&=t^{-a}\sum_{k=0}^{a}[k+1]_t e^{(a+1)}_{k+1}e'^{(a+2)}_{r-k}-t^{-a+1}\sum_{k=0}^{a-1}[k+1]_t e^{(a)}_{k+1}e'^{(a+1)}_{r-k}\\\label{eqn:Seind}
&=t^{-a}\sum_{k=0}^{a}[k+1]_t \left(e^{(a+1)}_{k+1}e'^{(a+2)}_{r-k}-te^{(a)}_{k+1}e'^{(a+1)}_{r-k}\right).
\end{align}
We assume (\ref{eqn:Seind}) for $a$ and prove it for $a+1$. We may assume $a<b$. Since we have
\begin{align*}
e^{(a+1)}_{k+1}e'^{(a+2)}_{r-k}-te^{(a)}_{k+1}e'^{(a+1)}_{r-k}&=\left(e^{(a)}_{k+1}+X_{a+1}e^{(a)}_{k}\right)\left(e'^{(a+3)}_{r-k}+X_{a+2}e'^{(a+3)}_{r-k-1}\right)\\
&\hspace{1em}-te^{(a)}_{k+1}\left(e'^{(a+3)}_{r-k}+(X_{a+1}+X_{a+2})e'^{(a+3)}_{r-k-1}+X_{a+1}X_{a+2}e'^{(a+3)}_{r-k-2}\right)\\
&=(1-t)e^{(a)}_{k+1}e'^{(a+3)}_{r-k}+X_{a+1}\left(e^{(a)}_{k}e'^{(a+3)}_{r-k}-te^{(a)}_{k+1}e'^{(a+3)}_{r-k-1}\right)\\
&\hspace{1em}+(1-t)X_{a+2}e^{(a)}_{k+1}e'^{(a+3)}_{r-k-1}+X_{a+1}X_{a+2}\left(e^{(a)}_{k}e'^{(a+3)}_{r-k-1}-te^{(a)}_{k+1}e'^{(a+3)}_{r-k-2}\right),
\end{align*}
we can rewrite (\ref{eqn:Seind}) for $a$ as
\begin{align}\label{eqn:Seindr}
&T_a^{-1}\cdots T_1^{-1}\Pi\bullet e_r(X_1,\ldots,X_b)\\\nonumber
&=t^{-a}(1-t)\sum_{k=0}^{a-1}[k+1]_te^{(a)}_{k+1}e'^{(a+3)}_{r-k}+t^{-a}X_{a+1}\sum_{k=0}^{a}e^{(a)}_{k}e'^{(a+3)}_{r-k}\\\nonumber
&\hspace{1em}+t^{-a}(1-t)X_{a+2}\sum_{k=0}^{a-1}[k+1]_te^{(a)}_{k+1}e'^{(a+3)}_{r-k-1}+t^{-a}X_{a+1}X_{a+2}\sum_{k=0}^{a}e^{(a)}_{k}e'^{(a+3)}_{r-k-1},
\end{align}
where we have used $e'^{(a)}_{a+1} = 0$ in the first and the third sums, and used $[k+1]_t - t [k]_t = 1$ in the second and the fourth sums. Note that $e$ and $e'$ in the RHS of (\ref{eqn:Seindr}) does not involve $X_{a+1},X_{a+2}$, and hence $T_{a+1}^{-1}$ commutes with them. Therefore, we have
\begin{align}\label{eqn:SaeLHS}
&T_{a+1}^{-1}\cdots T_1^{-1}\Pi\bullet e_r(X_1,\ldots,X_b)\\\nonumber
&=t^{-a-1}(1-t)\sum_{k=0}^{a-1}[k+1]_te^{(a)}_{k+1}e'^{(a+3)}_{r-k}+t^{-a-1}\left(X_{a+2}+(1-t)X_{a+1}\right)\sum_{k=0}^{a}e^{(a)}_{k}e'^{(a+3)}_{r-k}\\\nonumber
&\hspace{1em}+t^{-a-1}(1-t)X_{a+1}\sum_{k=0}^{a-1}t[k+1]_te^{(a)}_{k+1}e'^{(a+3)}_{r-k-1}+t^{-a-1}X_{a+1}X_{a+2}\sum_{k=0}^{a}e^{(a)}_{k}e'^{(a+3)}_{r-k-1}\\\nonumber
&=t^{-a-1}(1-t)\sum_{k=0}^{a-1}[k+1]_te^{(a)}_{k+1}e'^{(a+3)}_{r-k}+t^{-a-1}X_{a+2}\sum_{k=0}^{a}e^{(a)}_{k}e'^{(a+3)}_{r-k}\\\nonumber
&\hspace{1em}+t^{-a-1}(1-t)X_{a+1}\sum_{k=0}^{a}[k+1]_te^{(a)}_{k}e'^{(a+3)}_{r-k}+t^{-a-1}X_{a+1}X_{a+2}\sum_{k=0}^{a}e^{(a)}_{k}e'^{(a+3)}_{r-k-1}.
\end{align}

On the other hand, since we have
\begin{align*}
e^{(a+2)}_{k+1}e'^{(a+3)}_{r-k}-te^{(a+1)}_{k+1}e'^{(a+2)}_{r-k}&=\left(e^{(a)}_{k+1}+(X_{a+1}+X_{a+2})e^{(a)}_{k}+X_{a+1}X_{a+2}e^{(a)}_{k-1}\right)e'^{(a+3)}_{r-k}\\
&\hspace{1em}-t\left(e^{(a)}_{k+1}+X_{a+1}e^{(a)}_{k}\right)\left(e'^{(a+3)}_{r-k}+X_{a+2}e'^{(a+3)}_{r-k-1}\right)\\
&=(1-t)e^{(a)}_{k+1}e'^{(a+3)}_{r-k}+X_{a+2}\left(e^{(a)}_{k}e'^{(a+3)}_{r-k}-te^{(a)}_{k+1}e'^{(a+3)}_{r-k-1}\right)\\
&\hspace{1em}+(1-t)X_{a+1}e^{(a)}_{k}e'^{(a+3)}_{r-k}+X_{a+1}X_{a+2}\left(e^{(a)}_{k-1}e'^{(a+3)}_{r-k}-te^{(a)}_{k}e'^{(a+3)}_{r-k-1}\right),
\end{align*}
we obtain 
\begin{align}\label{eqn:SaeRHS}
&t^{-a-1}\sum_{k=0}^{a+1}[k+1]_t \left(e^{(a+2)}_{k+1}e'^{(a+3)}_{r-k}-te^{(a+1)}_{k+1}e'^{(a+2)}_{r-k}\right)\\\nonumber
&=t^{-a-1}(1-t)\sum_{k=0}^{a-1}[k+1]_te^{(a)}_{k+1}e'^{(a+3)}_{r-k}+t^{-a-1}X_{a+2}\sum_{k=0}^{a}e^{(a)}_{k}e'^{(a+3)}_{r-k}\\\nonumber
&\hspace{1em}+t^{-a-1}(1-t)X_{a+1}\sum_{k=0}^{a}[k+1]_te^{(a)}_{k}e'^{(a+3)}_{r-k}+t^{-a-1}X_{a+1}X_{a+2}\sum_{k=0}^{a}e^{(a)}_{k}e'^{(a+3)}_{r-k-1}
\end{align}
where we used $e^{(a)}_{a+1} = e^{(a)}_{a+2} = 0$ to reduce terms at all sums. Identifying the RHS of (\ref{eqn:SaeLHS}) and the RHS of (\ref{eqn:SaeRHS}), the induction proceeds. This completes the proof.
\end{proof}

\begin{corollary}\label{Cor_Sa_elem}
For any $0\leq a<m$ and $r\in\bb{Z}_{\geq0}$, we have
\begin{align*}
	\widehat{\msf{S}}^{(m)}_{a}\bullet e_r(X_1,\ldots,X_a)=t^{-a}[r+1]_te_{r+1}(X_1,\ldots,X_{a+1}).
\end{align*} 
In particular, we have
\begin{align*}
	\widehat{\msf{S}}^{(m)}_{a}\bullet 1=t^{-a}e_{1}(X_1,\ldots,X_{a+1}).
\end{align*}
\end{corollary}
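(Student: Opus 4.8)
The plan is to specialize Lemma~\ref{Lem_Selem} to the case $b=a$ and then exploit the degenerate convention for elementary symmetric functions in the empty variable set. First I would set $b=a$ in the formula (\ref{eqn:Sae}), which gives
\begin{align*}
	\widehat{\msf{S}}^{(m)}_{a}\bullet e_r(X_1,\ldots,X_a)=t^{-a}\sum_{k=0}^{a}[k+1]_t e_{k+1}(X_1,\ldots,X_{a+1})e_{r-k}(X_{a+2},\ldots,X_{a+1}).
\end{align*}
By the stated convention $e_k(X_{a+2},\ldots,X_{b+1})=\delta_{k,0}$ when $a=b$, the factor $e_{r-k}(X_{a+2},\ldots,X_{a+1})$ equals $\delta_{r-k,0}$, so only the summand with $k=r$ survives. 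When $0\le r\le a$ this collapses to exactly $t^{-a}[r+1]_t e_{r+1}(X_1,\ldots,X_{a+1})$, which is the asserted identity.

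The one point requiring a little care is the out-of-range case $r>a$, which I would dispatch by observing that both sides vanish: the left-hand side is $e_r(X_1,\ldots,X_a)=0$, while on the right the index $k=r$ lies outside $\{0,\ldots,a\}$, so the surviving term is absent and the sum is empty; moreover $e_{r+1}(X_1,\ldots,X_{a+1})=0$ since $r+1>a+1$. Hence the identity holds uniformly for all $r\ge 0$.

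Finally, the ``in particular'' statement is just the specialization $r=0$: since $e_0(X_1,\ldots,X_a)=1$ and $[1]_t=1$, the formula reduces to $\widehat{\msf{S}}^{(m)}_{a}\bullet 1=t^{-a}e_1(X_1,\ldots,X_{a+1})$. There is essentially no obstacle beyond correctly bookkeeping the empty-variable convention and the index range; the entire computational substance is already contained in Lemma~\ref{Lem_Selem}, so this corollary is a one-line specialization.
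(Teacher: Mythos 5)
Your proposal is correct and is exactly the paper's own argument: the paper proves this corollary by the single remark that it is the special case $b=a$ of Lemma~\ref{Lem_Selem}, and your write-up simply spells out the collapse of the sum via the convention $e_{k}(X_{a+2},\ldots,X_{b+1})=\delta_{k,0}$, the out-of-range case $r>a$, and the $r=0$ specialization. Nothing further is needed.
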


\begin{proof}
This is a special case $b=a$ of Lemma~\ref{Lem_Selem}.	
\end{proof}

\begin{corollary}\label{Cor_complete}
For any $m,n \in \bb{Z}_{>0}$, we have
\begin{align*}
	\msf{S}^{(m)}_{\msf{e}_n}\bullet1=t^{n(n-1)/2}[n]_t!\cdot e_n(X_1,\ldots,X_m).
\end{align*}
In particular, this will stabilize at $m\rightarrow\infty$ and obtain
\begin{align*}
	\mb{X}_{\Gamma_{\msf{e}_n}}(q,t)=t^{n(n-1)/2}[n]_t!\cdot e_n(X).
\end{align*}
\end{corollary}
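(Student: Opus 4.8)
The plan is to combine the factorization of $\msf{S}^{(m)}_{\msf{e}_n}$ into the operators $\widehat{\msf{S}}^{(m)}_a$ with repeated application of the raising formula of Corollary~\ref{Cor_Sa_elem}. For the complete graph $\msf{e}_n(i)=0$ for all $i$, so the associated area sequence is $\msf{a}(i)=i-1$ and hence $m-1-\msf{a}(i)=m-i$. Thus
\[
	\msf{S}^{(m)}_{\msf{e}_n}=t^{n(m-1)}\widehat{\msf{S}}^{(m)}_{m-1}\widehat{\msf{S}}^{(m)}_{m-2}\cdots\widehat{\msf{S}}^{(m)}_{m-n},
\]
and I would apply these operators to $1$ starting from the right.

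First I would observe that the rightmost operator acts as $\widehat{\msf{S}}^{(m)}_{m-n}\bullet 1=\widehat{\msf{S}}^{(m)}_{m-n}\bullet e_0(X_1,\ldots,X_{m-n})$, and, more generally, that after applying the first $j$ operators one obtains a scalar multiple of $e_j(X_1,\ldots,X_{m-n+j})$. The crucial point is that the number of variables $m-n+j$ equals the index $a=m-n+j$ of the next operator $\widehat{\msf{S}}^{(m)}_{m-n+j}$ to be applied, so that Corollary~\ref{Cor_Sa_elem} applies in its clean form $\widehat{\msf{S}}^{(m)}_{a}\bullet e_j(X_1,\ldots,X_a)=t^{-a}[j+1]_te_{j+1}(X_1,\ldots,X_{a+1})$ at every stage, rather than the more general Lemma~\ref{Lem_Selem}. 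A straightforward ascending induction on $j$ then yields
\[
	\widehat{\msf{S}}^{(m)}_{m-1}\cdots\widehat{\msf{S}}^{(m)}_{m-n}\bullet 1=\left(\prod_{j=0}^{n-1}t^{-(m-n+j)}[j+1]_t\right)e_n(X_1,\ldots,X_m).
\]

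The remaining step is exponent bookkeeping. The product of $t$-integers is $\prod_{j=0}^{n-1}[j+1]_t=[n]_t!$, while $\sum_{j=0}^{n-1}(m-n+j)=n(m-n)+\tfrac{n(n-1)}{2}$. Multiplying by the prefactor $t^{n(m-1)}$, the total power of $t$ is $n(m-1)-n(m-n)-\tfrac{n(n-1)}{2}=\tfrac{n(n-1)}{2}$, which gives $\msf{S}^{(m)}_{\msf{e}_n}\bullet 1=t^{n(n-1)/2}[n]_t!\,e_n(X_1,\ldots,X_m)$. For the final assertion I would verify the stabilization directly in this special case, without appealing to Theorem~\ref{Thm_stability}: since $e_n(X_1,\ldots,X_m)$ is compatible with the truncation maps $\pi_{m,m'}$ (projecting to $e_n$ in fewer variables, and to $0$ once $m'<n$) and the scalar $t^{n(n-1)/2}[n]_t!$ is independent of $m$, the projective limit exists and equals $t^{n(n-1)/2}[n]_t!\,e_n(X)$.

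The main obstacle here is conceptual rather than computational: one must recognize that the index of each operator exactly tracks the number of variables appearing in the intermediate elementary symmetric polynomial, which is what keeps the iteration inside the clean special case of Corollary~\ref{Cor_Sa_elem}. Once this index-matching is noticed, the proof reduces to the two routine arithmetic simplifications above.
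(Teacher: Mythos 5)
Your proposal is correct and follows essentially the same route as the paper: the same factorization of $\msf{S}^{(m)}_{\msf{e}_n}$ into the operators $\widehat{\msf{S}}^{(m)}_{m-1},\ldots,\widehat{\msf{S}}^{(m)}_{m-n}$, the same iterated application of Corollary~\ref{Cor_Sa_elem} exploiting the match between the operator index and the number of variables, and the same exponent bookkeeping. Your direct verification of the stabilization (compatibility of $e_n(X_1,\ldots,X_m)$ with $\pi_{m,m'}$) is a harmless elaboration of what the paper leaves implicit.
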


\begin{proof}
The area sequence $\msf{a}_n\in\bb{A}_n$ corresponding to $\msf{e}_n\in\bb{E}_n$ is given by $\msf{a}_n(i)=i-1$ for each $i=1,\ldots,n$. 
By using Corollary~\ref{Cor_Sa_elem} repeatedly, we obtain
\begin{align*}
\msf{S}^{(m)}_{\msf{e}_n}\bullet1&=t^{n(m-1)}\widehat{\msf{S}}^{(m)}_{m-1}\widehat{\msf{S}}^{(m)}_{m-2}\cdots\widehat{\msf{S}}^{(m)}_{m-n}\bullet 1\\
&=t^{n(m-1) -(m-n)} \widehat{\msf{S}}^{(m)}_{m-1}\widehat{\msf{S}}^{(m)}_{m-2}\cdots\widehat{\msf{S}}^{(m)}_{m-n+1}\bullet e_1(X_1,\ldots,X_{m-n+1})\\
&=t^{n(m-1) -(m-n)-(m-n+1)} [2]_t\cdot\widehat{\msf{S}}^{(m)}_{m-1}\widehat{\msf{S}}^{(m)}_{m-2}\cdots\widehat{\msf{S}}^{(m)}_{m-n+2}\bullet e_2(X_1,\ldots,X_{m-n+2})\\
&=\cdots\\
&=t^{n(n-1)/2}[n]_t!\cdot e_n(X_1,\ldots,X_m)
\end{align*}
as required, where we used the equality
\begin{align*}
	n(m-1) - \sum_{j=1}^n (m-j) = \frac{n(n-1)}{2}.
\end{align*}
\end{proof}

\section{Modular law}

In this section, we prove that the $m$-truncated $(q,t)$-chromatic symmetric functions satisfies the modular law in the sense of Abreu-Nigro \cite{AN21} recalled in Lemma~\ref{Lem_emodular}. Our main results will follow from the modular law.

\subsection{Preparations}

In this section, we prove two preparatory lemmas needed to prove the modular law.

\begin{lemma}\label{Lem_modI}
For any $a<m-1$, we have
\begin{align*}
	\widehat{\msf{S}}^{(m)}_{a+1}\widehat{\msf{S}}^{(m)}_{a}(1-tT_{m-1}^{-1})=0
\end{align*}
as an element of $\msc{H}_m$.
\end{lemma}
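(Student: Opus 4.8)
The plan is to strip off the rightmost factor, push the shift operators to the outside, and reduce everything to a clean identity in the finite Hecke subalgebra. First, since $a<0$ makes $\widehat{\msf{S}}^{(m)}_{a}=0$ and the statement trivial, I assume $0\le a<m-1$. Using the quadratic relation $(T_{m-1}-t)(T_{m-1}+1)=0$ one gets $tT_{m-1}^{-1}=T_{m-1}-(t-1)$, hence $1-tT_{m-1}^{-1}=t-T_{m-1}$. Writing $A_{p,q}\coloneqq 1+T_p^{-1}+T_{p+1}^{-1}T_p^{-1}+\cdots+T_q^{-1}\cdots T_p^{-1}$ so that $\widehat{\msf{S}}^{(m)}_{a}=A_{1,a}\Pi$, I would repeatedly apply $\Pi T_i^{-1}=T_{i+1}^{-1}\Pi$ to move $\Pi$ past $A_{1,a}$, giving $\Pi A_{1,a}=A_{2,a+1}\Pi$ and therefore $\widehat{\msf{S}}^{(m)}_{a+1}\widehat{\msf{S}}^{(m)}_{a}=A_{1,a+1}A_{2,a+1}\Pi^2$. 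Because $a+1\le m-1$, the factor $A_{1,a+1}A_{2,a+1}$ lies in the finite Hecke subalgebra generated by $T_1,\ldots,T_{m-1}$, so no wraparound to $T_0$ occurs. Finally $\Pi^2 T_{m-1}=T_1\Pi^2$ (from $\Pi T_{m-1}=T_0\Pi$ and $\Pi T_0=T_1\Pi$), which yields
\[
\widehat{\msf{S}}^{(m)}_{a+1}\widehat{\msf{S}}^{(m)}_{a}(1-tT_{m-1}^{-1})=A_{1,a+1}A_{2,a+1}(t-T_1)\Pi^2.
\]
As $\Pi$ is invertible, the lemma reduces to the purely finite identity $A_{1,q}A_{2,q}(t-T_1)=0$ with $q=a+1\ge1$.

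I would prove $A_{1,q}A_{2,q}(t-T_1)=0$ by induction on $q$, the base case $q=1$ being the standard relation $(1+T_1^{-1})(t-T_1)=0$, which follows from $t(1+T_1^{-1})=1+T_1$. For the inductive step, peeling $T_1^{-1}$ off $A_{1,q}=1+A_{2,q}T_1^{-1}$ and then $T_2^{-1}$ off $A_{2,q}=1+A_{3,q}T_2^{-1}$ (commuting $T_1^{-1}$ past $A_{3,q}$, which uses only indices $\ge 3$) gives the decomposition
\[
A_{1,q}A_{2,q}=A_{2,q}(1+T_1^{-1})+A_{2,q}A_{3,q}\,T_1^{-1}T_2^{-1}.
\]
Right-multiplying by $(t-T_1)$ annihilates the first summand by the base relation, leaving the ``tail'' $A_{2,q}A_{3,q}\,T_1^{-1}T_2^{-1}(t-T_1)$ to be shown to vanish.

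The crux is the vanishing of this tail, which I would settle by an eigenvalue argument rather than by a combinatorial coset count (indeed $A_{1,q}A_{2,q}$ is not a full symmetrizer once $q\ge3$, so no simple pairing applies). Set $M\coloneqq A_{2,q}A_{3,q}$ and $\xi\coloneqq T_1^{-1}T_2^{-1}(t-T_1)$. A direct rank-two computation using the braid relation $T_2^{-1}T_1^{-1}T_2^{-1}=T_1^{-1}T_2^{-1}T_1^{-1}$ shows $(1+T_2^{-1})\xi=T_1^{-1}T_2^{-1}(1+T_1^{-1})(t-T_1)=0$, hence $T_2\xi=-\xi$. On the other hand $M=A_{2,q}A_{3,q}$ is exactly the index-shifted analogue of $A_{1,q-1}A_{2,q-1}$, so the induction hypothesis (relabelling $i\mapsto i-1$) gives $M(t-T_2)=0$, i.e.\ $MT_2=tM$. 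Combining the two facts, $tM\xi=(MT_2)\xi=M(T_2\xi)=-M\xi$, so $(t+1)M\xi=0$ and therefore $M\xi=0$ since $t+1\ne0$ in $\bb{Q}_{q,t}$. This closes the induction and proves the lemma. I expect the main obstacle to be purely organizational: fixing the $A_{p,q}$ shorthand and the index bookkeeping so that the two peeling identities and the shift $i\mapsto i-1$ line up, and checking that every braid move stays among $T_1,\ldots,T_{m-1}$ (guaranteed by $a+1\le m-1$). The genuinely substantive point is the short identity $tM\xi=-M\xi$, which is what forces the tail to vanish.
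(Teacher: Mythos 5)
Your proof is correct, and it takes a genuinely different route from the paper's. Both arguments begin the same way, conjugating by powers of $\Pi$ to reduce the lemma to a product identity for partial sums of descending $T^{-1}$-monomials; the paper pushes the $\Pi$'s outward to the \emph{left}, landing on the identity $(1+T_0^{-1}+\cdots+T_a^{-1}\cdots T_0^{-1})(1+T_1^{-1}+\cdots+T_a^{-1}\cdots T_1^{-1})(1-tT_0^{-1})=0$, which involves the affine generator $T_0$ and is then verified by expanding the double sum and straightening each term $T_j^{-1}\cdots T_0^{-1}\cdot T_k^{-1}\cdots T_0^{-1}$ case-by-case ($j+1\leq k$ versus $j\geq k$) before resumming. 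You instead push the $\Pi$'s to the \emph{right}, so the whole computation happens in the finite Hecke subalgebra generated by $T_1,\ldots,T_{m-1}$, and you replace the straightening-and-resummation by an induction whose engine is the eigenvalue clash $M T_2 = tM$ versus $T_2\xi=-\xi$, forcing $(t+1)M\xi=0$. Your reduction to $A_{1,q}A_{2,q}(t-T_1)=0$ is exactly the $\Pi$-conjugate of the paper's target identity, so the difference is entirely in how that core identity is established; your version is shorter and arguably more conceptual, while the paper's is a direct term-by-term verification. The only point worth making explicit is the relabelling step: to apply the induction hypothesis to $M=A_{2,q}A_{3,q}$ you need the identity $A_{1,q-1}A_{2,q-1}(t-T_1)=0$ to follow purely from the braid and quadratic relations among $T_1,\ldots,T_{q-1}$ (so that it transfers under $T_i\mapsto T_{i+1}$); this is indeed the case since your induction uses nothing else, but it is cleanest to carry the uniform statement $A_{p,q}A_{p+1,q}(t-T_p)=0$ for all $p$ through the induction on $q-p$. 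All individual computations you invoke --- $1-tT_i^{-1}=t-T_i$, $\Pi A_{1,a}=A_{2,a+1}\Pi$, the peeling decomposition $A_{1,q}A_{2,q}=A_{2,q}(1+T_1^{-1})+A_{2,q}A_{3,q}T_1^{-1}T_2^{-1}$, and $(1+T_2^{-1})T_1^{-1}T_2^{-1}=T_1^{-1}T_2^{-1}(1+T_1^{-1})$ --- check out.
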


\begin{proof}
If $a<0$, then the equality is trivial as $\widehat{\msf{S}}^{(m)}_{a}=0$. If $0\leq a<m-1$, then the LHS is given by
\begin{align}\nonumber
&(1+T_1^{-1}+\cdots+T_{a+1}^{-1}\cdots T_1^{-1})\Pi(1+T_1^{-1}+\cdots+T_{a}^{-1}\cdots T_1^{-1})\Pi(1-tT_{m-1}^{-1})\\\nonumber
&=\Pi(1+T_0^{-1}+\cdots+T_{a}^{-1}\cdots T_0^{-1})(1+T_1^{-1}+\cdots+T_{a}^{-1}\cdots T_1^{-1})(1-tT_0^{-1})\Pi\\
&=\Pi\sum_{j=-1}^{a}\sum_{k=0}^{a} T_j^{-1}\cdots T_0^{-1}\cdot T_k^{-1}\cdots T_1^{-1}(1-tT_0^{-1})\Pi,\label{eqn:PTTP}
\end{align}
where $j=-1$ or $k=0$ means the corresponding monomials are understood to be $1$. If $0\leq j+1\leq k<m-1$, then we have
\begin{align*}
	T_j^{-1}\cdots T_0^{-1}\cdot T_k^{-1}\cdots T_0^{-1}&=T_j^{-1}\cdots T_1^{-1}T_k^{-1}\cdots T_2^{-1}T_0^{-1}T_1^{-1}T_0^{-1}\\
	&=T_j^{-1}\cdots T_1^{-1}T_k^{-1}\cdots T_2^{-1}T_1^{-1}T_0^{-1}\cdot T_1^{-1}\\
	&=\cdots\\
	&=T_k^{-1}\cdots T_0^{-1}\cdot T_{j+1}^{-1}\cdots T_1^{-1}
\end{align*}
by Definition~\ref{def:AHA}. If $m-1>j\geq k\geq0$, then we have
\begin{align*}
T_j^{-1}\cdots T_0^{-1}\cdot T_k^{-1}\cdots T_0^{-1}&=T_j^{-1}\cdots T_k^{-1}T_{k-1}^{-1}T_k^{-1}\cdots T_0^{-1}\cdot T_{k-1}^{-1}\cdots T_0^{-1}\\
&=T_j^{-1}\cdots T_{k-1}^{-1}T_{k}^{-1}T_{k-1}^{-1}\cdots T_0^{-1}\cdot T_{k-1}^{-1}\cdots T_0^{-1}\\
&= T_{k-1}^{-1}T_j^{-1}\cdots T_0^{-1}\cdot T_{k-1}^{-1}\cdots T_0^{-1}\\
& = \cdots\\
	&= ( T_{k-1}^{-1}\cdots T_0^{-1} ) \cdot T_j^{-1}\cdots T_1^{-1} T_0^{-1} T_0^{-1}\\
	&= (t^{-1}-1) ( T_{k-1}^{-1}\cdots T_0^{-1} ) ( T_j^{-1}\cdots T_0^{-1} ) + t^{-1}  ( T_{k-1}^{-1}\cdots T_0^{-1} ) ( T_j^{-1}\cdots T_1^{-1} )\\
	&=(t^{-1}-1)T_j^{-1}\cdots T_0^{-1}\cdot T_k^{-1}\cdots T_1^{-1}+t^{-1}T_{k-1}^{-1}\cdots T_0^{-1}\cdot T_j^{-1}\cdots T_1^{-1}
\end{align*}
by Definition~\ref{def:AHA} and the case $0\leq j+1\leq k<m-1$ above. Summing them up, we obtain
\begin{align*}
	t\sum_{j=-1}^{a}\sum_{k=0}^{a}T_j^{-1}\cdots T_0^{-1}\cdot T_k^{-1}\cdots T_0^{-1}&=t\sum_{0\leq j+1\leq k\leq a}T_k^{-1}\cdots T_0^{-1}T_{j+1}^{-1}\cdots T_1^{-1}\\
	&+(1-t)\sum_{0\leq k\leq j\leq a}T_j^{-1}\cdots T_0^{-1}T_k^{-1}\cdots T_1^{-1}\\
	&+\sum_{0\leq k\leq j\leq a}T_{k-1}^{-1}\cdots T_0^{-1}T_j^{-1}\cdots T_1^{-1}\\
	&=\sum_{j=-1}^a\sum_{k=0}^aT_j^{-1}\cdots T_0^{-1}T_k^{-1}\cdots T_1^{-1}.
\end{align*}
The comparison with (\ref{eqn:PTTP}) yields the desired equality.
\end{proof}

\begin{lemma}\label{Lem_modII}
Suppose that $F(X)\in\bb{Q}_{q,t} [X^{\pm1}]_{(m)}$ satisfies $T_{m-1}\bullet F(X)=tF(X)$. For any $0\leq a<m-1$, we have
\begin{align*}
	(1+t)\widehat{\msf{S}}^{(m)}_{a}\widehat{\msf{S}}^{(m)}_{a}\bullet F(X)=t\widehat{\msf{S}}^{(m)}_{a+1}\widehat{\msf{S}}^{(m)}_{a}\bullet F(X)+\widehat{\msf{S}}^{(m)}_{a}\widehat{\msf{S}}^{(m)}_{a-1}\bullet F(X).
\end{align*}
\end{lemma}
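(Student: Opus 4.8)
The plan is to reduce the asserted three–term relation to a single commutation identity in the finite Hecke subalgebra generated by $T_1,\ldots,T_{m-1}$, and to feed in the hypothesis $T_{m-1}\bullet F(X)=tF(X)$ only at the very end. Throughout I abbreviate $S_b\coloneqq\widehat{\msf{S}}^{(m)}_b$ and set $Q_b\coloneqq T_b^{-1}\cdots T_1^{-1}\Pi$ (with $Q_0=\Pi$), so that $S_b=\sum_{k=0}^b Q_k$ and hence $S_{a+1}=S_a+Q_{a+1}$ and $S_{a-1}=S_a-Q_a$ (this also reads correctly at $a=0$, where $S_{-1}=0$). Substituting these into the right–hand side, the two resulting copies of $S_aS_a$ cancel against $(1+t)S_aS_a$, and the claim becomes equivalent to
\[
    tQ_{a+1}S_a\bullet F(X)=S_aQ_a\bullet F(X).
\]

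Next I would strip off the two occurrences of $\Pi$. Writing $R_b\coloneqq T_b^{-1}\cdots T_1^{-1}$ and $P_b\coloneqq\sum_{k=0}^b R_k$, we have $Q_b=R_b\Pi$ and $S_b=P_b\Pi$. Since $a<m-1$, every index occurring in $R_{a+1}$ and $P_a$ lies in $\{1,\ldots,m-1\}$, so the relation $\Pi T_i^{-1}=T_{i+1}^{-1}\Pi$ applies with no wrap–around; moving both $\Pi$'s to the right yields
\[
    Q_{a+1}S_a=R_{a+1}\,P_a^{(1)}\,\Pi^2,\qquad S_aQ_a=P_a\,R_a^{(1)}\,\Pi^2,
\]
where a superscript $(1)$ denotes the operator obtained by raising every index of $T$ by one, so $P_a^{(1)}=1+T_2^{-1}+\cdots+T_{a+1}^{-1}\cdots T_2^{-1}$ and $R_a^{(1)}=T_{a+1}^{-1}\cdots T_2^{-1}$ (note $R_a^{(1)}T_1^{-1}=R_{a+1}$).

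The heart of the argument is the purely algebraic commutation identity
\[
    R_{a+1}P_a^{(1)}=P_aR_{a+1}\qquad\text{in }\msc{H}_m .
\]
I would prove it term by term: expanding $P_a^{(1)}=\sum_{k=0}^a R_k^{(1)}$ and $P_a=\sum_{k=0}^a R_k$, it suffices to check $R_{a+1}R_k^{(1)}=R_kR_{a+1}$ for each $0\le k\le a$. Passing to the underlying symmetric group this is, after inverting, the identity $(s_{a+1}\cdots s_1)(s_{k+1}\cdots s_2)=(s_k\cdots s_1)(s_{a+1}\cdots s_1)$; one checks that the two words define the same permutation (the one sending $1\mapsto a+2$, $2\mapsto k+1$, shifting $3,\ldots,k+2$ down by two and $k+3,\ldots$ down by one) and that both are reduced of length $a+1+k$ (a direct inversion count), so by $T_uT_v=T_{uv}$ when $\ell(uv)=\ell(u)+\ell(v)$ they agree in $\msc{H}_m$; for instance at $a=1,k=1$ this is exactly the braid move $T_2^{-1}T_1^{-1}T_2^{-1}=T_1^{-1}T_2^{-1}T_1^{-1}$. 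Summing over $k$ gives the displayed identity. This reduced–word bookkeeping is the step I expect to be the main obstacle, although it is elementary once set up; note that it uses neither Lemma~\ref{Lem_modI} nor the hypothesis on $F$.

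Finally I would reintroduce the $\Pi$'s and invoke the hypothesis. Using the commutation identity and $R_a^{(1)}T_1^{-1}=R_{a+1}$,
\[
    tQ_{a+1}S_a=tR_{a+1}P_a^{(1)}\Pi^2=tP_aR_a^{(1)}T_1^{-1}\Pi^2 .
\]
Because $\Pi^2 T_{m-1}\Pi^{-2}=T_{m+1}=T_1$ in $\msc{H}_m$, we have $T_1^{-1}\Pi^2=\Pi^2 T_{m-1}^{-1}$, whence $tQ_{a+1}S_a=tP_aR_a^{(1)}\Pi^2 T_{m-1}^{-1}$. Applying this to $F(X)$ and using $T_{m-1}^{-1}\bullet F(X)=t^{-1}F(X)$, which is equivalent to the hypothesis $T_{m-1}\bullet F(X)=tF(X)$, cancels the factor $t$ and leaves $P_aR_a^{(1)}\Pi^2\bullet F(X)=S_aQ_a\bullet F(X)$, exactly the reduced form obtained in the first paragraph. (Equivalently, one may observe that $\Pi^2\bullet F(X)=X_1X_2\,F(X_3,\ldots,X_m,q^{-1}X_1,q^{-1}X_2)$ is $s_1$–invariant precisely because $F$ is $s_{m-1}$–invariant, and apply the commutation identity to this element; either route closes the argument.)
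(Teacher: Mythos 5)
Your proof is correct and follows essentially the same route as the paper: both reduce the three--term relation to the single identity $t\,T_{a+1}^{-1}\cdots T_1^{-1}\Pi\,\widehat{\msf{S}}^{(m)}_a\bullet F=\widehat{\msf{S}}^{(m)}_a\,T_a^{-1}\cdots T_1^{-1}\Pi\bullet F$, verify it by commuting the descending chains of $T^{-1}$'s past each other via braid relations, and invoke the hypothesis exactly once through $T_1^{-1}\Pi^2=\Pi^2T_{m-1}^{-1}$ (the paper's $\Pi\bullet F=tT_0^{-1}\Pi\bullet F$ in conjugated form). The only differences are cosmetic: you push both $\Pi$'s to the right so as to work in the finite Hecke subalgebra and certify the commutation identity by a reduced-word length count, whereas the paper conjugates by $\Pi^{-1}$ and chains the braid moves explicitly; your version also treats $a=0$ uniformly.
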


\begin{proof}
If $a=0$, then we have $\widehat{\msf{S}}^{(m)}_{0}=\Pi$ and $\widehat{\msf{S}}^{(m)}_{-1}=0$. Hence, we obtain
\begin{align*}
	(1+t)\widehat{\msf{S}}^{(m)}_{0}\widehat{\msf{S}}^{(m)}_{0}\bullet F(X)-t\widehat{\msf{S}}^{(m)}_{1}\widehat{\msf{S}}^{(m)}_{0}\bullet F(X)&=(1+t)\Pi^2\bullet F(X)-t(1+T_1^{-1})\Pi^2\bullet F(X)\\
	&=(1+t)\Pi^2\bullet F(X)-t\Pi^2(1+T_{m-1}^{-1})\bullet F(X)=0
\end{align*}
by the assumption $T_{m-1}^{-1}\bullet F(X) = t^{-1}F(X)$. 

If $0<a<m-1$, then we have
\begin{align*}
	&\Pi^{-1}\left((1+t)\widehat{\msf{S}}^{(m)}_{a}\widehat{\msf{S}}^{(m)}_{a}-t\widehat{\msf{S}}^{(m)}_{a+1}\widehat{\msf{S}}^{(m)}_{a}-\widehat{\msf{S}}^{(m)}_{a}\widehat{\msf{S}}^{(m)}_{a-1}\right)\Pi^{-1}\\
	&=(1+t)(1+T_0^{-1}+\cdots+T_{a-1}^{-1}\cdots T_0^{-1})(1+T_1^{-1}+\cdots+T_{a}^{-1}\cdots T_1^{-1})\\
	&\hspace{1em}-t(1+T_0^{-1}+\cdots+T_{a}^{-1}\cdots T_0^{-1})(1+T_1^{-1}+\cdots+T_{a}^{-1}\cdots T_1^{-1})\\
	&\hspace{1em}-(1+T_0^{-1}+\cdots+T_{a-1}^{-1}\cdots T_0^{-1})(1+T_1^{-1}+\cdots+T_{a-1}^{-1}\cdots T_1^{-1})\\
	&=(1+T_0^{-1}+\cdots+T_{a-1}^{-1}\cdots T_0^{-1})\cdot T_{a}^{-1}\cdots T_1^{-1}-tT_{a}^{-1}\cdots T_0^{-1}\cdot(1+T_1^{-1}+\cdots+T_{a}^{-1}\cdots T_1^{-1}).
\end{align*}
Since we have $\Pi\bullet F(X)=tT_0^{-1}\Pi\bullet F(X)$, we obtain
\begin{align*}
	T_j^{-1}\cdots T_0^{-1}\cdot T_a^{-1}\cdots T_1^{-1}\Pi\bullet F(X)&=tT_j^{-1} \cdots T_1^{-1} \cdot T_a^{-1}\cdots T_2^{-1}T_0^{-1}T_1^{-1}T_0^{-1}\Pi\bullet F(X)\\
	&=tT_j^{-1}\cdots T_1^{-1} \cdot T_a^{-1}\cdots T_0^{-1}\cdot T_1^{-1}\Pi\bullet F(X)\\
	&=tT_j^{-1}\cdots T_2^{-1} \cdot T_a^{-1}\cdots T_1^{-1}T_2^{-1}T_1^{-1}T_0^{-1}\cdot T_1^{-1}\Pi\bullet F(X)\\
	&=tT_j^{-1}\cdots T_2^{-1} \cdot T_a^{-1}\cdots T_0^{-1}\cdot T_2^{-1}T_1^{-1}\Pi(F)\\
	&=\cdots\\
	&=tT_a^{-1}\cdots T_0^{-1}\cdot T_{j+1}^{-1}\cdots T_1^{-1}\Pi(F)
\end{align*}
for each $-1\leq j<a$ by using Definition~\ref{def:AHA}. This implies the desired equality.
\end{proof}

\subsection{Modular law}

\begin{thm}\label{Thm_modular_law}
For any $F(X)\in\bb{Q}_{q,t}[X^{\pm1}]_{(m)}^{\mf{S}_m}$, the map $\chi_{F}:\bb{E}_n\rightarrow\bb{Q}_{q,t} [X^{\pm1}]_{(m)}^{\mf{S}_m}$ given by 
\begin{align*}
	\chi_{F}(\msf{e})=\msf{S}_{\msf{e}}^{(m)}\bullet F(X)
\end{align*}
satisfies the modular law in the sense of Lemma~\ref{Lem_emodular}.
\end{thm}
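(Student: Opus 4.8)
The plan is to work throughout with the area-sequence form
\[
\chi_F(\msf{e})=t^{n(m-1)}\widehat{\msf{S}}^{(m)}_{m-1-\msf{a}(1)}\cdots\widehat{\msf{S}}^{(m)}_{m-1-\msf{a}(n)}\bullet F(X),
\]
writing $b_j:=m-1-\msf{a}(j)$ for the hat indices, and to translate each triple $(\msf{e},\msf{e}',\msf{e}'')$ of Lemma~\ref{Lem_emodular} into a local modification of this word. A direct computation with $\msf{a}(i)=i-1-\msf{e}(i)$ shows that in case (i) only the single index $b_i$ changes, from $a:=m-1-\msf{a}(i)$ to $a+1$ (for $\msf{e}'$) and to $a-1$ (for $\msf{e}''$), while in case (ii) the adjacent pair $(b_i,b_{i+1})=(a,a)$ becomes $(a+1,a)$ for $\msf{e}'$ and $(a,a-1)$ for $\msf{e}''$. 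Thus the two cases are designed to match Lemma~\ref{Lem_modI} and Lemma~\ref{Lem_modII} respectively, and I would split the proof accordingly; the fact that each $\chi_F(\msf{e})$ lands in $\bb{Q}_{q,t}[X^{\pm1}]_{(m)}^{\mf{S}_m}$ is already Lemma~\ref{Lem_Sesym}.

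For case (ii) I would factor $\chi_F(\msf{e})=t^{n(m-1)}(\mathrm{head})\cdot\widehat{\msf{S}}^{(m)}_a\widehat{\msf{S}}^{(m)}_a\bullet G$ with tail $G:=\widehat{\msf{S}}^{(m)}_{b_{i+2}}\cdots\widehat{\msf{S}}^{(m)}_{b_n}\bullet F$, so that the desired relation is exactly the conclusion of Lemma~\ref{Lem_modII} after multiplying by the common (unchanged) head. The only thing to verify is its hypothesis $T_{m-1}\bullet G=tG$, i.e.\ that $G$ is $s_{m-1}$-invariant. I would establish this by iterating the third case of Lemma~\ref{Lem_Sinv}: applying $\widehat{\msf{S}}^{(m)}_{b_j}$ turns an $s_{p-1}$-invariant input into an $s_p$-invariant output provided $b_j\neq p-1$, so running the chain $p=m-1,m-2,\dots$ downward shows $G$ is $s_{m-1}$-invariant as soon as $b_{i+1+l}\neq m-1-l$ for all $l\geq1$. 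The key point is the identity $b_j=m-j+\msf{e}(j)$, which makes $b_{i+1+l}=m-1-l$ equivalent to $\msf{e}(i+1+l)=i$; hence the entire chain of non-degeneracy conditions is precisely $\msf{e}^{-1}(i)=\emptyset$, and the recursion bottoms out at the symmetric $F$.

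For case (i) I would start from the operator identity
\[
(1+t)\widehat{\msf{S}}^{(m)}_a-t\widehat{\msf{S}}^{(m)}_{a+1}-\widehat{\msf{S}}^{(m)}_{a-1}=(1-tT_{a+1}^{-1})\,T_a^{-1}\cdots T_1^{-1}\Pi,
\]
immediate from the telescoping definition of $\widehat{\msf{S}}^{(m)}_a$. Inserting this at position $i$ shows the modular combination $(1+t)\chi_F(\msf{e})-t\chi_F(\msf{e}')-\chi_F(\msf{e}'')$ equals the original word with the factor $(1-tT_{a+1}^{-1})$ inserted just to the right of $\widehat{\msf{S}}^{(m)}_{b_{i-1}}$. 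Setting $p:=\msf{e}(i)$, the condition $\msf{e}(\msf{e}(i))=\msf{e}(\msf{e}(i)+1)$ becomes $\msf{a}(p+1)=\msf{a}(p)+1$, i.e.\ the hats at positions $p,p+1$ form a consecutive descending pair $\widehat{\msf{S}}^{(m)}_{c+1}\widehat{\msf{S}}^{(m)}_{c}$ with $c=b_{p+1}$. I would then commute $(1-tT_{a+1}^{-1})$ leftward using $\widehat{\msf{S}}^{(m)}_b T_d^{-1}=T_{d+1}^{-1}\widehat{\msf{S}}^{(m)}_b$ for $d\geq b+1$; each passage raises the index by one, and since $a=m-i+p$ the factor becomes exactly $(1-tT_{m-1}^{-1})$ at the moment it sits immediately to the right of $\widehat{\msf{S}}^{(m)}_{b_{p+1}}$. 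Lemma~\ref{Lem_modI} then gives $\widehat{\msf{S}}^{(m)}_{c+1}\widehat{\msf{S}}^{(m)}_{c}(1-tT_{m-1}^{-1})=0$, killing the whole expression.

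I expect the main obstacle to be the bookkeeping in case (i): one must check that \emph{every} intermediate commutation is legal, i.e.\ that the running index $d=a+i-j$ satisfies $d\geq b_j+1$ for each $j$ from $i-1$ down to $p+2$. This is exactly where the $\bb{E}_n$/area inequalities are needed: combining the strict $\msf{e}(i-1)<\msf{e}(i)$ (equivalently $\msf{a}(i-1)\geq\msf{a}(i)$) with the defining bound $\msf{a}(k+1)\leq\msf{a}(k)+1$ telescopes to $\msf{a}(j)\geq\msf{a}(i)+j-i+1$, which is precisely $d\geq b_j+1$; an earlier, naive telescoping that omits the strict step falls short by one, so this input is essential. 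One must also record the boundary facts that $\msf{e}(i)\geq1$ and $p\leq i-2$ (both forced by condition (i)), and use the convention $\msf{e}(n+1)=n-1$ for $i=n$. By contrast, case (ii) is routine once the symmetry hypothesis of Lemma~\ref{Lem_modII} has been translated into $\msf{e}^{-1}(i)=\emptyset$ as above.
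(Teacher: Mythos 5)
Your overall strategy coincides with the paper's: translate both cases of Lemma~\ref{Lem_emodular} into local modifications of the word of $\widehat{\msf{S}}$-operators, reduce case (i) to Lemma~\ref{Lem_modI} via the operator identity and the leftward commutation of $(1-tT_{a+1}^{-1})$, and reduce case (ii) to Lemma~\ref{Lem_modII}. Your case (i) bookkeeping (the telescoped inequality $\msf{a}(j)\geq\msf{a}(i)+j-i+1$) is exactly the paper's argument. You should, however, also dispose of the degenerate situations $\msf{a}(i)\geq m$, where your operator identity is unavailable: for $\msf{a}(i)=m$ the combination equals $-t\Pi$ and one must observe that $\msf{e}(i-1)<\msf{e}(i)$ forces $\widehat{\msf{S}}^{(m)}_{m-1-\msf{a}(i-1)}=0$, while for $\msf{a}(i)>m$ all three terms vanish. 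This is a minor but genuine omission for small $m$.

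The substantive gap is in your verification of the hypothesis of Lemma~\ref{Lem_modII}, i.e., the $s_{m-1}$-invariance of the tail $G=F_{i+2}$. You iterate ``the third case of Lemma~\ref{Lem_Sinv}'' under the sole proviso $b_j\neq p-1$, but case (iii) of that lemma applies only when $p=b_j$ or $p>b_j+1$; when $1<p<b_j$ one is forced into case (ii), which requires the input to be invariant under \emph{both} $s_{p-1}$ and $s_p$. The regime $p<b_{i+1+l}$, equivalently $\msf{e}(i+1+l)>i+1$, is not excluded by $\msf{e}^{-1}(i)=\emptyset$ and genuinely occurs (e.g.\ $\msf{e}=(0,0,1,3,4,5)$ with $i=2$ at $l=2$), so your single-invariance induction hypothesis is too weak to propagate down the chain. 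The paper repairs exactly this by strengthening the inductive statement to invariance under a whole window $s_{m-1-l_j},\ldots,s_{m-1-k_j}$, with $k_j$ incrementing only when $\msf{a}(i+j+1)\geq k_j$ and $l_j=\min\{j-1,m-2\}$; the hypothesis $\msf{e}^{-1}(i)=\emptyset$ is then used to rule out $\msf{a}(i+j+1)=k_j+1$, which is the one configuration where the window would fail to supply the pair of consecutive invariances needed for Lemma~\ref{Lem_Sinv}(ii). Your identification of the obstruction $b_{i+1+l}=m-1-l$ with $\msf{e}(i+1+l)=i$ is correct, but it is only one of two ways the naive one-track chain can break, so as written the case (ii) argument does not close.
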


\begin{proof}
We first assume that a triple $(\msf{e},\msf{e}',\msf{e}'') \in \bb{E}_n$ satisfies the first condition of Lemma~\ref{Lem_emodular}, where we borrow the index $i$ there. In particular, we have $\msf{e}'(i)=\msf{e}(i)+1$ and $\msf{e}''(i)=\msf{e}(i)-1$ and $\msf{e}(j) < \msf{e}(i)$ for each $j < i$. Let $(\msf{a},\msf{a'},\msf{a}'')$ be the corresponding triple in $\bb{A}_n$. We have $\msf{a}'(i)=\msf{a}(i)-1$ and $\msf{a}''(i)=\msf{a}(i)+1$ by $\msf{a}(j)=j-1-\msf{e}(j)$ for $1\leq j \leq m$. A direct computation yields
\begin{align*}
	(1+t)\widehat{\msf{S}}_{m-1-\msf{a}(i)}^{(m)}-t\widehat{\msf{S}}_{m-1-\msf{a}'(i)}^{(m)}-\widehat{\msf{S}}_{m-1-\msf{a}''(i)}^{(m)}=\begin{cases}
		\left(1-tT_{m-\msf{a}(i)}^{-1}\right)T_{m-\msf{a}(i)-1}^{-1}\cdots T_1^{-1}\Pi&\mbox{ if }\msf{a}(i)<m,\\
		-t\Pi &\mbox{ if }\msf{a}(i)=m,\\
		0&\mbox{ if }\msf{a}(i)>m.
	\end{cases}
\end{align*}

If $\msf{a}(i)>m$, then we have nothing to prove. If $\msf{a}(i)=m$, then $\msf{e}(i-1)<\msf{e}(i)$ implies that $\msf{a}(i-1)>m-1$ and hence $\widehat{\msf{S}}_{m-1-\msf{a}(i-1)}^{(m)}=0$. This implies that the modular law trivially holds in this case. 

Therefore, we may assume $\msf{a}(i)< m$. If $\msf{e}(i)+1<j<i$, then we have
\begin{align*}
	\widehat{\msf{S}}^{(m)}_{m-1-\msf{a}(j)}T_{m+\msf{e}(i)-j}^{-1}=T_{m+\msf{e}(i)-j+1}^{-1}\widehat{\msf{S}}^{(m)}_{m-1-\msf{a}(j)}
\end{align*}
by  
\begin{align*}
	m-\msf{a}(j)=m+\msf{e}(j)-j+1<m+\msf{e}(i)-j+1<m.
\end{align*}
Therefore, by moving the factor $\left(1-tT^{-1}_{m-\msf{a}(i)}\right)=\left(1-tT^{-1}_{m+\msf{e}(i)-i+1}\right)$ to the left, it is enough to check 
\begin{align*}
\widehat{\msf{S}}^{(m)}_{m-1-\msf{a}(\msf{e}(i))}\widehat{\msf{S}}^{(m)}_{m-1-\msf{a}(\msf{e}(i)+1)}(1-tT_{m-1}^{-1})=0.
\end{align*}
By the assumption that $\msf{e}(\msf{e}(i)+1)=\msf{e}(\msf{e}(i))$, we have $m-1-\msf{a}(\msf{e}(i))=m-1-\msf{a}(\msf{e}(i)+1)+1$ and hence this equality follows from Lemma~\ref{Lem_modI}. 

We next assume that a triple $(\msf{e},\msf{e}',\msf{e}'')$ satisfies the second condition of Lemma~\ref{Lem_emodular}. If $\msf{a}(i)=0$, then we have $\msf{e}(i)=i-1$ and hence $\msf{e}(i+1)=i$, but this contradicts to the condition $\msf{e}^{-1}(i)=\emptyset$. Hence we should have $\msf{a}(i)>0$.

By Lemma~\ref{Lem_modII} applied to $a=m-1-\msf{a}(i)<m-1$, it is enough to show that $F_{i+2}(X)$ is $s_{m-1}$-invariant, where $F_j(X)$ is defined by (\ref{Eqn_Fj}) in the proof of Lemma~\ref{Lem_Sesym}. 

We inductively define a nondecreasing sequence of integers $k_1,\ldots,k_{n-i}$ by $k_1=0$ and 
\begin{align*}
	k_{j+1}=\begin{cases}
		k_j&\mbox{ if }\msf{a}(i+j+1)<k_j,\\
		k_j+1&\mbox{ if }\msf{a}(i+j+1)\geq k_j.
	\end{cases}
\end{align*}

We note that $\msf{a}(i+j+1)\geq k_{j+1}$ for some $j$ implies $k_{j+1}=k_j+1$. Since $\msf{a}(i+j+1)\geq k_j+1$, we obtain $k_j\leq\msf{a}(i+j+1)-1\leq\msf{a}(i+j)$ and hence also $k_{j}=k_{j-1}+1$. By continuing, we obtain $k_j=j-1$. Therefore, if there exists $j$ such that $\msf{a}(i+j+1)=k_j+1$, then we obtain $\msf{e}(i+j+1)=i+j-\msf{a}(i+j+1)=i$, which contradicts to the assumption that $\msf{e}^{-1}(i)=\emptyset$. Hence we have $\msf{a}(i+j+1)\neq k_j+1$ for any $j$.

We claim that $F_{i+j+1}$ is invariant under $s_{m-1-l_j},s_{m-l_j},\ldots,s_{m-1-k_j}$ for 
\begin{align*}
	l_j\coloneqq\min\{j-1,m-2\}.
\end{align*}
We prove this claim by descending induction on $j$.

If $j=n-i$, then we have $F_{i+j+1}(X)=F(X)$, $m-1-l_{j}\geq1$, and $m-1-k_j\leq m-1$. Hence the assumption implies the claim for $j=n-i$.

Assume by induction that $F_{i+j+2}(X)$ is invariant under $s_{m-1-l_{j+1}},s_{m-l_{j+1}},\ldots,s_{m-1-k_{j+1}}$. Recall that we have
\begin{align*}
	F_{i+j+1}(X)=\widehat{\msf{S}}^{(m)}_{m-1-\msf{a}(i+j+1)}\bullet F_{i+j+2}(X).
\end{align*}

We first consider the case $j\geq m-1$ and hence $m-1-l_j=1$. If we have $\msf{a}(i+j+1)>k_{j+1}$, then we have $k_{j}=j-1$. Since $m-1-k_j=m-j\leq1$, we have nothing to prove if $j\geq m$ and $F_{i+j+1}(X)$ is $s_1$-invariant if $j=m-1$. In the latter case, we have $m-1-\msf{a}(i+j+1)<m-1-j=0$ and hence $F_{i+j+1}(X)=0$. 

If we have $\msf{a}(i+j+1)<k_{j+1}$, then we obtain $\msf{a}(i+j+1)\leq k_{j}$. If $\msf{a}(i+j+1)=k_{j}$, then we have $k_{j+1}=k_j+1$ and $m-1-\msf{a}(i+j+1)=m-1-k_j$. If $\msf{a}(i+j+1)=m-1$, then we have nothing to prove. Hence Lemma~\ref{Lem_Sinv} implies that $F_{i+j+1}(X)$ is invariant under $s_1,\ldots,s_{m-1-k_j}=s_{m-k_{j+1}}$. 

If $\msf{a}(i+j+1)<k_{j}$, then we have $k_{j+1}=k_j$ and $m-1-\msf{a}(i+j+1)>m-1-k_j$. If $\msf{a}(i+j+1)=m-1$, then $m-1-k_j<0$ and we have nothing to prove. Hence Lemma~\ref{Lem_Sinv} implies that $F_{i+j+1}(X)$ is invariant under $s_1,\ldots,s_{m-1-k_j}=s_{m-1-k_{j+1}}$. 

We next consider the case $j<m-1$. In this case, we have $l_j=j-1$ and $l_{j+1}=j$. Hence $F_{i+j+2}(X)$ is invariant under $s_{m-1-j},s_{m-j},\ldots$. Lemma~\ref{Lem_Sinv} and $\msf{a}(i+j+1)\neq k_j+1$ then imply that $F_{i+j+1}(X)$ is invariant under $s_{m-j},\ldots$. This completes the proof of the claim.
  
By applying the claim for $j=1$, we find that $F_{i+2}(X)$ is invariant under $s_{m-1}$ if $F(X)$ symmetric. This finishes the proof of the Theorem. 
\end{proof}

\section{Main theorems}

We now give a proof of our main results stated in the Introduction.

\subsection{Proof of Theorem~\ref{Main_B}}

In this section, we completes the proof of Theorem~\ref{Main_B}. The first assertion is a combination of Lemma~\ref{Lem_iota} and Corollary~\ref{Cor_q_stability}. The second assertion is Proposition~\ref{Prop_qmult_q=1}. The fourth assertion follows from Lemma~\ref{Lem_iota_elem} and the definition of the quantum multiplication.

We now prove the third assertion.

\begin{thm}\label{Thm_main}
For any unit interval graph $\Gamma$, the symmetric function $\msf{q}^{-1}\left(\mb{X}_{\Gamma}(q,t)\right)$ coincides with the chromatic quasisymmetric function $\mb{Y}_{\Gamma}(t)$ written in the $Y$-variables.
\end{thm}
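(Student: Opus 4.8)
The plan is to invoke the Abreu--Nigro characterization (Theorem~\ref{Thm_AN}): I would show that the map $\psi\colon\bigsqcup_n\bb{E}_n\to\bb{Q}_{q,t}[Y]_{(\infty)}$ defined by $\psi(\msf{e})\coloneqq\msf{q}^{-1}\bigl(\mb{X}_{\Gamma_{\msf{e}}}(q,t)\bigr)$ satisfies the three defining conditions, now read in the $Y$-variables, and then conclude $\psi(\msf{e})=\mb{Y}_{\Gamma_{\msf{e}}}(t)$ by uniqueness. First I would check that $\psi$ really lands in symmetric functions in $Y$: since $\mb{X}_{\Gamma_{\msf{e}}}(q,t)\in\Lambda_{q,t}$ is symmetric in $X$, Lemma~\ref{Lem_sym_to_sym} (applied at each truncation and passed to the limit via Corollary~\ref{Cor_q_stability}) shows $\msf{q}^{-1}$ carries it to a symmetric function in $Y$. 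The ring of symmetric functions in the $Y$-variables is canonically identified with $\Lambda_{q,t}$ via $X_i\leftrightarrow Y_i$, so Theorem~\ref{Thm_AN} applies verbatim in the $Y$-variables.

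For the normalization condition~(iii), I would combine Corollary~\ref{Cor_complete}, which gives $\mb{X}_{\Gamma_{\msf{e}_n}}(q,t)=t^{n(n-1)/2}[n]_t!\,e_n(X)$, with Lemma~\ref{Lem_iota_elem}, which gives $\msf{q}(e_n(Y))=t^{n(n-1)/2}e_n(X)$; applying $\msf{q}^{-1}$ yields $\psi(\msf{e}_n)=[n]_t!\,e_n(Y)$. For the multiplicativity condition~(ii), I would use Corollary~\ref{Cor_fact} together with its stabilization (via Corollary~\ref{Cor_spmult}) to write $\mb{X}_{\Gamma_{\msf{e}\cup\msf{e}'}}(q,t)=\mb{X}_{\Gamma_{\msf{e}}}(q,t)\star\mb{X}_{\Gamma_{\msf{e}'}}(q,t)$, and then invoke the very definition of the quantum multiplication, which says precisely that $\msf{q}^{-1}$ intertwines $\star$ with the ordinary product in the $Y$-variables; hence $\psi(\msf{e}\cup\msf{e}')=\psi(\msf{e})\cdot\psi(\msf{e}')$.

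The modular law, condition~(i), is the heart of the matter, but the real work will already have been done in Theorem~\ref{Thm_modular_law}. Taking $F(X)=1$ there shows that $\msf{e}\mapsto\mb{X}_{\Gamma_{\msf{e}}}^{(m)}(q,t)$ satisfies the modular law of Lemma~\ref{Lem_emodular} for every $m$; since the modular relation is a fixed $\bb{Q}_{q,t}$-linear identity and the truncation maps $\pi_{m,m'}$ are linear, it passes to the projective limit, so $\msf{e}\mapsto\mb{X}_{\Gamma_{\msf{e}}}(q,t)$ satisfies it in $\Lambda_{q,t}$. Because $\msf{q}^{-1}$ is a $\bb{Q}_{q,t}$-linear isomorphism, it commutes with multiplication by $t$ and preserves linear relations, so for every admissible triple the computation would read
\begin{align*}
	(1+t)\psi(\msf{e})=\msf{q}^{-1}\bigl((1+t)\mb{X}_{\Gamma_{\msf{e}}}(q,t)\bigr)=\msf{q}^{-1}\bigl(t\mb{X}_{\Gamma_{\msf{e}'}}(q,t)+\mb{X}_{\Gamma_{\msf{e}''}}(q,t)\bigr)=t\psi(\msf{e}')+\psi(\msf{e}''),
\end{align*}
that is, $\psi$ satisfies the modular law.

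Having verified all three conditions, the uniqueness in Theorem~\ref{Thm_AN} forces $\psi(\msf{e})=\mb{Y}_{\Gamma_{\msf{e}}}(t)$, i.e. $\msf{q}^{-1}\bigl(\mb{X}_{\Gamma}(q,t)\bigr)=\mb{Y}_{\Gamma}(t)$, as claimed. The only genuine obstacle is the modular law, which is exactly why Theorem~\ref{Thm_modular_law} was established beforehand; granting it, the present argument is essentially a transport of structure through the linear isomorphism $\msf{q}$, with the two remaining conditions reducing to the explicit computations of Corollaries~\ref{Cor_complete} and~\ref{Cor_fact} and the definition of $\star$.
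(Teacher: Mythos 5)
Your proposal is correct and follows essentially the same route as the paper: verify the three conditions of the Abreu--Nigro characterization (Theorem~\ref{Thm_AN}) for $\msf{q}^{-1}(\mb{X}_{\Gamma}(q,t))$, using Theorem~\ref{Thm_modular_law} for the modular law, Corollary~\ref{Cor_fact} with the definition of $\star$ for multiplicativity, and Corollary~\ref{Cor_complete} with Lemma~\ref{Lem_iota_elem} for the normalization on complete graphs. Your write-up merely spells out the linearity and stabilization steps that the paper leaves implicit.
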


\begin{proof}
By Theorem~\ref{Thm_modular_law}, $\msf{q}^{-1}\left(\mb{X}_{\Gamma_{\msf{e}}}(q,t)\right)$ satisfies the modular law. By Corollary~\ref{Cor_fact}, we obtain 
\begin{align*}
	\msf{q}^{-1}\left(\mb{X}_{\Gamma\cup\Gamma'}(q,t)\right)=\msf{q}^{-1}\left(\mb{X}_{\Gamma}(q,t)\right)\cdot\msf{q}^{-1}\left(\mb{X}_{\Gamma'}(q,t)\right).
\end{align*}
By Lemma~\ref{Lem_iota_elem} and Corollary~\ref{Cor_complete}, we obtain 
\begin{align*}
	\msf{q}^{-1}\left(\mb{X}_{\Gamma_{\msf{e}_n}}(q,t)\right)=[n]_t!\cdot e_n(X).
\end{align*}
Therefore, the result follows from Theorem~\ref{Thm_AN}.
\end{proof}

\subsection{Proof of Theorem~\ref{Main_A}}

In this section, we completes the proof of Theorem~\ref{Main_A}. The first assertion follows from Corollary~\ref{Cor_symm}. The statement about the coefficients is easy to see by the definition. The third assertion follows from Theorem~\ref{Thm_main} by specializing $q=1$.

We now prove the second assertion.

\begin{thm}\label{Thm_stability}
For any $F(X)\in\Lambda_{q,t}^{(m)}$, $\msf{e}\in\bb{E}_n$, and $0< m'<m$, we have
\begin{align*}
	\pi_{m,m'}\left(\msf{S}_{\msf{e}}^{(m)}\bullet F(X)\right)=\msf{S}_{\msf{e}}^{(m')}\bullet \pi_{m,m'}(F(X)).
\end{align*}
\end{thm}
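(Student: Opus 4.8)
\emph{The plan.} I would first reduce to the consecutive case $m'=m-1$, using $\pi_{m,m'}=\pi_{m'+1,m'}\circ\cdots\circ\pi_{m,m-1}$ and the fact that the truncation of a symmetric polynomial is again symmetric, so the one-step claim can be iterated. I would then split the problem into the special case $F=1$ and a reduction of the general symmetric $F$ to it.

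The reduction of general $F$ rests on a centrality observation. For symmetric $F\in\Lambda_{q,t}^{(m)}$, the element $P\coloneqq\msf{q}_{(m)}^{-1}(F)\in\bb{Q}_{q,t}[Y]_{(m)}$ is a symmetric $Y$-polynomial by Lemma~\ref{Lem_sym_to_sym}, hence lies in the center of $\msc{H}_m$ (as in the proof of Lemma~\ref{Lem_star_commute}). Since $F=P\bullet 1$, centrality and associativity of the action give
\[
\msf{S}_{\msf{e}}^{(m)}\bullet F=\msf{S}_{\msf{e}}^{(m)}P\bullet 1=P\,\msf{S}_{\msf{e}}^{(m)}\bullet 1=\msf{q}_{(m)}^{-1}(F)\bullet\bigl(\msf{S}_{\msf{e}}^{(m)}\bullet 1\bigr).
\]
Both factors are polynomial (the second by Corollary~\ref{Cor_symm}), so Proposition~\ref{Prop_stability} applies, and combined with Corollary~\ref{Cor_q_stability} (which identifies $\pi_{m,m-1}(\msf{q}_{(m)}^{-1}(F))$ with $\msf{q}_{(m-1)}^{-1}(\pi_{m,m-1}(F))$) it yields
\[
\pi_{m,m-1}\bigl(\msf{S}_{\msf{e}}^{(m)}\bullet F\bigr)=\msf{q}_{(m-1)}^{-1}\bigl(\pi_{m,m-1}(F)\bigr)\bullet\pi_{m,m-1}\bigl(\msf{S}_{\msf{e}}^{(m)}\bullet 1\bigr).
\]
Granting the special case $\pi_{m,m-1}(\msf{S}_{\msf{e}}^{(m)}\bullet 1)=\msf{S}_{\msf{e}}^{(m-1)}\bullet 1$ and running the centrality identity backwards at level $m-1$ (legitimate since $\pi_{m,m-1}(F)$ is symmetric, so $\msf{q}_{(m-1)}^{-1}(\pi_{m,m-1}(F))$ is central), the right-hand side becomes $\msf{S}_{\msf{e}}^{(m-1)}\bullet\pi_{m,m-1}(F)$, which is the assertion. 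Thus everything reduces to the case $F=1$.

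For the case $F=1$ I would invoke the Abreu--Nigro characterization in the sharp form of Remark~\ref{Rem_AN}. Consider the two maps $\bb{E}_n\to\bb{Q}_{q,t}[X]_{(m-1)}^{\mf{S}_{m-1}}$ defined by $\chi_A(\msf{e})=\pi_{m,m-1}(\msf{S}_{\msf{e}}^{(m)}\bullet 1)$ and $\chi_B(\msf{e})=\msf{S}_{\msf{e}}^{(m-1)}\bullet 1$. By Theorem~\ref{Thm_modular_law} (with $F=1$) the map $\msf{e}\mapsto\msf{S}_{\msf{e}}^{(m)}\bullet 1$ satisfies the modular law, and since $\pi_{m,m-1}$ is $\bb{Q}(t)$-linear so does $\chi_A$; likewise $\chi_B$ satisfies it directly. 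By Remark~\ref{Rem_AN} it then suffices to check that $\chi_A$ and $\chi_B$ agree on all $\msf{e}_\mu$ with $\mu$ a composition of $n$. Using multiplicativity (Corollary~\ref{Cor_fact}), the complete-graph evaluation $\msf{S}_{\msf{e}_{\mu_i}}^{(m)}\bullet 1=t^{\mu_i(\mu_i-1)/2}[\mu_i]_t!\,e_{\mu_i}(X_1,\ldots,X_m)$ (Corollary~\ref{Cor_complete}), the fact that $\pi_{m,m-1}$ is a homomorphism for $\star$ (Corollary~\ref{Cor_spmult}), and $\pi_{m,m-1}(e_{\mu_i}(X_1,\ldots,X_m))=e_{\mu_i}(X_1,\ldots,X_{m-1})$, one finds that both $\chi_A(\msf{e}_\mu)$ and $\chi_B(\msf{e}_\mu)$ equal $\prod_i t^{\mu_i(\mu_i-1)/2}[\mu_i]_t!$ times the $\star$-product $e_{\mu_1}(X_1,\ldots,X_{m-1})\star\cdots\star e_{\mu_l}(X_1,\ldots,X_{m-1})$. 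Hence $\chi_A=\chi_B$, settling the case $F=1$.

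I expect the genuine content to sit entirely in the $F=1$ case, and the main subtlety there to be organizational: one must verify that each of the three ingredients characterizing the chromatic quasisymmetric function---the modular law, multiplicativity, and the complete-graph values---descends through $\pi_{m,m-1}$, so that $\chi_A$ falls under the same characterization as $\chi_B$. Compatibility with the modular law is immediate from linearity of $\pi_{m,m-1}$, but the multiplicativity step crucially uses that $\pi_{m,m-1}$ respects the \emph{deformed} product $\star$ (Corollary~\ref{Cor_spmult}) rather than the ordinary product; this is precisely where the earlier stabilization of $\msf{q}$ (Corollary~\ref{Cor_q_stability}, itself resting on Proposition~\ref{Prop_stability}) is indispensable. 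Once these compatibilities are in hand, Abreu--Nigro uniqueness closes the argument with no further explicit computation.
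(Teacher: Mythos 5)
Your proposal is correct and follows essentially the same route as the paper: both sides are shown to satisfy the modular law via Theorem~\ref{Thm_modular_law}, Remark~\ref{Rem_AN} reduces the verification to $\msf{e}=\msf{e}_\mu$, and that case is settled by combining Lemma~\ref{Lem_star_commute}, Corollary~\ref{Cor_spmult}, Corollary~\ref{Cor_fact}, and Corollary~\ref{Cor_complete}. The only difference is organizational --- you peel off general symmetric $F$ first via centrality, Proposition~\ref{Prop_stability}, and Corollary~\ref{Cor_q_stability} before invoking the modular law for $F=1$, whereas the paper applies the modular-law argument directly with general $F$ --- but the ingredients and the logic are the same.
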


\begin{proof}
By Theorem~\ref{Thm_modular_law}, the both sides of the equality satisfy the modular law with respect to $\msf{e} \in \bb{E}_{n}$. Hence by Remark~\ref{Rem_AN}, it is enough to show the equality when $\msf{e}=\msf{e}_{\mu}$ for any composition $\mu=(\mu_1,\ldots,\mu_l)$ of $n$. This follows by combining Lemma~\ref{Lem_star_commute}, Corollary~\ref{Cor_spmult}, Corollary~\ref{Cor_fact}, and Corollary~\ref{Cor_complete}. 
\end{proof}

\subsection{Proof of Theorem~\ref{Main_C}}

In this final section, we consider the specialization of $(q,t)$-chromatic symmetric functions at $q=\infty$ and prove Theorem~\ref{Main_C}.

We first prove the second assertion of Theorem~\ref{Main_C}.

\begin{lemma}\label{Lem_qtelem_limit}
	For any partition $\lambda=(\lambda_1,\ldots,\lambda_l)$, we have
\begin{align*}
	\lim_{q\rightarrow\infty}e_{\lambda}^{(q,t)}(X)=\frac{[n]_t!}{\prod_{i=1}^{l}[\lambda_i]_t!}\cdot e_{n}(X).
\end{align*}
\end{lemma}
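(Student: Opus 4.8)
The plan is to carry out the whole computation inside a ``$q=\infty$ specialization'' of the quantum product, where the Pieri rule of Theorem~\ref{Thm_qtPieri_en} forces every iterated product of elementary symmetric functions to collapse onto a single $e_n(X)$, and then to read off the scalar by a telescoping of $t$-binomials.

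First I would set up the specialization. Since $e^{(q,t)}_\lambda(X)=e_{\lambda_1}(X)\star\cdots\star e_{\lambda_l}(X)$, it suffices to control iterated quantum products of single $e_a(X)$'s. For each $a$, consider the $\bb{Q}_{q,t}$-linear operator $L_a:=e_a(X)\star(-)$. Because $F\star G=\msf{q}^{-1}(F)\bullet G$ and $\msf{q}_{(m)}^{-1}(e_a(X))=t^{-a(a-1)/2}e_a(Y)$ by Lemma~\ref{Lem_iota_elem}, the operator $L_a$ is $t^{-a(a-1)/2}$ times the action of $e_a(Y)$. Inspecting $Y_i=t^{m-i}T_{i-1}\cdots T_1\Pi T_{m-1}^{-1}\cdots T_i^{-1}$ from (\ref{Eqn_Y_i}), the factors $T_j^{\pm1}$ act with matrix entries in $\bb{Z}[t^{\pm1}]$ by Lemma~\ref{Lem_AHA_action}, while $\Pi$ only introduces the variable $X_{m+1}=q^{-1}X_1$, hence only nonnegative powers of $q^{-1}$. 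Thus in each degree $L_a$ is defined over $R:=\bb{Q}(t)[q^{-1}]$, i.e. its matrix has entries in $R$ with no positive powers of $q$.

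The main work, and the step I expect to be the obstacle, is continuity. Since each $L_a$ is defined over $R$, reduction modulo $q^{-1}$ yields a well-defined operator $\overline{L_a}$ on $\bb{Q}(t)\otimes\Lambda$, and for any $F\in R\otimes\Lambda$ one has $\lim_{q\to\infty}(e_a\star F)=\overline{L_a}\big(\lim_{q\to\infty}F\big)$. In particular every $e^{(q,t)}_\lambda$ lies in $R\otimes\Lambda$, its limit equals $\overline{L_{\lambda_1}}\cdots\overline{L_{\lambda_l}}(1)$, and setting $e_a\star_\infty H:=\overline{L_a}(H)$ gives a commutative associative product on $\bb{Q}(t)\otimes\Lambda$ (commutativity and associativity of $\star$ at generic $q$ descend to the reductions, as the $L_a$ commute). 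Specializing the Pieri rule of Theorem~\ref{Thm_qtPieri_en} now kills the $q^{-1}$-term and leaves the base relation $e_1\star_\infty e_r=[r+1]_t e_{r+1}$. The delicate point is precisely that discarding all $q^{-1}$-contributions is legitimate: the error term $q^{-1}e_1 e_r$ in the Pieri rule, when subjected to a further multiplication $L_b$, stays bounded (it is $L_b$ applied to a fixed polynomial) and therefore vanishes in the limit, and this is exactly what definedness of the $L_a$ over $R$ guarantees.

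Finally I would run the collapse purely in $(\bb{Q}(t)\otimes\Lambda,\star_\infty)$. By induction on $a$ I prove the generalized Pieri rule $e_a\star_\infty e_b=\tfrac{[a+b]_t!}{[a]_t!\,[b]_t!}\,e_{a+b}$: the cases $a=0,1$ are immediate, and writing $e_{a+1}=[a+1]_t^{-1}(e_1\star_\infty e_a)$ and using associativity gives
\[
e_{a+1}\star_\infty e_b=\tfrac{1}{[a+1]_t}\,e_1\star_\infty(e_a\star_\infty e_b)=\tfrac{1}{[a+1]_t}\cdot\tfrac{[a+b]_t!}{[a]_t!\,[b]_t!}\,[a+b+1]_t\,e_{a+b+1}=\tfrac{[a+b+1]_t!}{[a+1]_t!\,[b]_t!}\,e_{a+b+1}.
\]
An induction on the number of parts then finishes the proof: peeling off $e_{\lambda_1}$ and applying this with $b=\lambda_2+\cdots+\lambda_l$ turns $\lim_{q\to\infty}e^{(q,t)}_\lambda(X)=e_{\lambda_1}\star_\infty\big(\lim_{q\to\infty}e^{(q,t)}_{(\lambda_2,\ldots,\lambda_l)}(X)\big)$ into a telescoping product of $t$-binomials that collapses to $\tfrac{[n]_t!}{\prod_i[\lambda_i]_t!}$, giving $\lim_{q\to\infty}e^{(q,t)}_\lambda(X)=\tfrac{[n]_t!}{\prod_{i=1}^{l}[\lambda_i]_t!}\,e_n(X)$ as claimed.
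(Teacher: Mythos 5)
Your proposal is correct and takes essentially the same route as the paper: the paper's own proof is a two-line appeal to the Pieri rule of Theorem~\ref{Thm_qtPieri_en}, iterating $e_1\star e_r\to[r+1]_te_{r+1}$ and collapsing $e_{\lambda_1}\star\cdots\star e_{\lambda_l}$ onto a multiple of $e_n(X)$ exactly as you do. The extra care you take --- checking that each $L_a=e_a(X)\star(-)$ is defined over $\bb{Q}(t)[q^{-1}]$ so that the $q\to\infty$ specialization commutes with further quantum multiplications --- is precisely the step the paper leaves implicit, and it is argued correctly.
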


\begin{proof}
By Theorem~\ref{Thm_qtPieri_en}, we obtain
\begin{align*}
	\lim_{q\rightarrow\infty}e_1(X)^{\star r}=\frac{e_r(X)}{[r]_t!}.
\end{align*}
The result follows from this equality.
\end{proof}

We next prove the first assertion of Theorem~\ref{Main_C}.

\begin{prop}\label{Prop_limit}
For any $\msf{e}\in\bb{E}_n$, we have
\begin{align*}
	\lim_{q\rightarrow\infty}\mb{X}_{\Gamma_{\msf{e}}}(q,t)=t^{n(n-1)/2-|\msf{e}|}[n]_t!e_n(X),
\end{align*}
where we set $|\msf{e}|=\sum_{i=1}^{n}\msf{e}(i)$.
\end{prop}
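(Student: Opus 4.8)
The plan is to pin down the $q\to\infty$ limit through the Abreu--Nigro uniqueness principle (Remark~\ref{Rem_AN}): I will exhibit both sides of the claimed equality as maps $\bb{E}_n\to\Lambda$ (symmetric functions over $\bb{Q}(t)$) that satisfy the modular law, so that they must coincide once they agree on the elements $\msf{e}_\mu$ for compositions $\mu$ of $n$. Write $\psi(\msf{e}):=\lim_{q\to\infty}\mb{X}_{\Gamma_{\msf{e}}}(q,t)$ and $\rho(\msf{e}):=t^{n(n-1)/2-|\msf{e}|}[n]_t!\,e_n(X)$. First I would note the limit is well defined coefficientwise: by Theorem~\ref{Main_A}(i) each $\mb{X}_{\Gamma_{\msf{e}}}^{(m)}(q,t)$ has coefficients in $\bb{Z}[q^{-1},t^{\pm1}]$, so after stabilizing in $m$ the specialization $q^{-1}=0$ makes sense and produces $\psi(\msf{e})$.

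Next I would verify that both maps satisfy the modular law. For $\psi$: Theorem~\ref{Thm_modular_law} with $F(X)=1$ gives the modular law for $\mb{X}_{\Gamma_{\msf{e}}}^{(m)}(q,t)=\msf{S}^{(m)}_{\msf{e}}\bullet 1$ at every level $m$; passing to $m\to\infty$ retains it for $\mb{X}_{\Gamma_{\msf{e}}}(q,t)$, and since the coefficients $1+t,t,1$ in the modular law do not involve $q$, the identity survives the ring homomorphism $q^{-1}\mapsto 0$. For $\rho$: in both cases of Lemma~\ref{Lem_emodular} a short inspection of the definitions of $\msf{e}',\msf{e}''$ gives $|\msf{e}'|=|\msf{e}|+1$ and $|\msf{e}''|=|\msf{e}|-1$, so after dividing by $[n]_t!\,e_n(X)$ the desired relation becomes $(1+t)t^{-|\msf{e}|}=t\cdot t^{-|\msf{e}|-1}+t^{-|\msf{e}|+1}$, which holds identically.

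It then remains to match $\psi$ and $\rho$ on $\msf{e}_\mu$ for a composition $\mu=(\mu_1,\dots,\mu_l)$ of $n$. Multiplicativity with respect to $\star$ (Corollary~\ref{Cor_fact}, stabilized to $\Lambda_{q,t}$) together with the complete-graph evaluation $\mb{X}_{\Gamma_{\msf{e}_{\mu_j}}}(q,t)=t^{\mu_j(\mu_j-1)/2}[\mu_j]_t!\,e_{\mu_j}(X)$ of Corollary~\ref{Cor_complete} yields $\mb{X}_{\Gamma_{\msf{e}_\mu}}(q,t)=\bigl(\prod_{j}t^{\mu_j(\mu_j-1)/2}[\mu_j]_t!\bigr)\,e^{(q,t)}_{\mu}(X)$, where $e^{(q,t)}_{\mu}(X)=e_{\mu_1}(X)\star\cdots\star e_{\mu_l}(X)$. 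Taking $q\to\infty$ and invoking Lemma~\ref{Lem_qtelem_limit} to replace $e^{(q,t)}_{\mu}(X)$ by $\tfrac{[n]_t!}{\prod_j[\mu_j]_t!}e_n(X)$, the factorials cancel and leave $\psi(\msf{e}_\mu)=t^{\sum_j\mu_j(\mu_j-1)/2}[n]_t!\,e_n(X)$. Finally, computing $|\msf{e}_\mu|=\sum_{i<j}\mu_i\mu_j$ directly from the concatenation in Definition~\ref{Def_concat} and using the elementary identity $\sum_j \tfrac{\mu_j(\mu_j-1)}{2}=\tfrac{n(n-1)}{2}-\sum_{i<j}\mu_i\mu_j$ (valid since $n=\sum_j\mu_j$) identifies this with $\rho(\msf{e}_\mu)$. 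By Remark~\ref{Rem_AN} we conclude $\psi=\rho$ on all of $\bb{E}_n$.

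The only genuinely delicate point is making the $q\to\infty$ limit rigorous and commuting it past both the modular law and the $\star$-multiplicativity; this is handled by the integrality in Theorem~\ref{Main_A}(i), which turns the limit into the honest ring-homomorphic specialization $q^{-1}=0$, together with the fact that Lemma~\ref{Lem_qtelem_limit} already absorbs the one nontrivial limit, that of $e^{(q,t)}_{\mu}(X)$. Everything else is bookkeeping with the modular law and the exponent identity.
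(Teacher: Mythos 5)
Your proposal is correct and follows essentially the same route as the paper: both sides are shown to satisfy the modular law (the right-hand side via $|\msf{e}'|=|\msf{e}|+1$, $|\msf{e}''|=|\msf{e}|-1$), the Abreu--Nigro uniqueness of Remark~\ref{Rem_AN} reduces the claim to $\msf{e}=\msf{e}_\mu$, and there the value is computed from $\star$-multiplicativity, the complete-graph evaluation, Lemma~\ref{Lem_qtelem_limit}, and the exponent identity $\sum_j \mu_j(\mu_j-1)/2 = n(n-1)/2 - |\msf{e}_\mu|$. Your extra care in justifying that the $q\to\infty$ specialization is well defined and commutes with the modular law is a welcome elaboration of points the paper leaves implicit, but it does not change the argument.
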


\begin{proof}
The RHS satisfies the modular law since we have $|\msf{e}'| = |\msf{e}| + 1$ and $|\msf{e}''| = |\msf{e}| - 1$ in both cases of Lemma~\ref{Lem_emodular}. Hence, it is enough to check the equality when $\msf{e}=\msf{e}_{\mu}$ for any composition $\mu=(\mu_1,\ldots,\mu_l)$ of $n$. 

We have $|\msf{e}_{\mu}|=\sum_{1\leq i<j\leq l}\mu_i\mu_j$ by Definition~\ref{Def_concat}. In particular, we have
\begin{align*}
	\sum_{i=1}^l \frac{\mu_i(\mu_i-1)}{2} = \frac{n(n-1)}{2}- |\msf{e}_{\mu}|.
\end{align*}
Thus, we conclude
\begin{align*}
	\lim_{q\rightarrow\infty}\mb{X}_{\Gamma_{\msf{e}_{\mu}}}(q,t)=\prod_{i=1}^{l}t^{\mu_i(\mu_i-1)/2}[\mu_i]_t!\cdot \lim_{q\rightarrow\infty}e^{(q,t)}_{\mu}(X)=t^{n(n-1)/2-|\mb{e}_{\mu}|}[n]_t!\cdot e_n(X)
\end{align*}
by Lemma~\ref{Lem_qtelem_limit} as required.
\end{proof}

For a unit interval graph $\Gamma$, we expand $\mb{Y}_{\Gamma}(t)$ as
\begin{align*}
	\mb{Y}_{\Gamma}(t) =\sum_{\lambda\vdash n}c_{\lambda}(\Gamma;t)e_\lambda(Y),
\end{align*}
where $\lambda \vdash n$ means $\lambda$ runs over all partitions of $n$. As a corollary of Proposition~\ref{Prop_limit}, we obtain the following linear relation between $c_{\lambda}(\Gamma;t)$.

\begin{corollary}\label{Cor_dist}
	For any unit interval graph $\Gamma$ corresponding to $\msf{e}\in \bb{E}_n$, we have
\begin{align*}
		\sum_{\lambda\vdash n}t^{|\msf{e}|-|\msf{e}_{\lambda}|}\frac{c_{\lambda}(\Gamma;t)}{\prod_{i}[\lambda_i]_t!}=1.
\end{align*}
\end{corollary}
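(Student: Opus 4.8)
The plan is to apply the linear isomorphism $\msf{q}$ to the $e$-expansion of $\mb{Y}_{\Gamma}(t)$ and then take the limit $q\to\infty$, comparing coefficients of $e_n(X)$ on both sides. First I would start from
\begin{align*}
	\mb{Y}_{\Gamma}(t) =\sum_{\lambda\vdash n}c_{\lambda}(\Gamma;t)e_\lambda(Y)
\end{align*}
and apply $\msf{q}$, using Theorem~\ref{Main_B}~(iii) that $\mb{X}_{\Gamma}(q,t)=\msf{q}(\mb{Y}_{\Gamma}(t))$ together with the formula $\msf{q}(e_{\lambda}(Y))=t^{\sum_i\lambda_i(\lambda_i-1)/2}e^{(q,t)}_{\lambda}(X)$ from Theorem~\ref{Main_B}~(iv). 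This yields
\begin{align*}
	\mb{X}_{\Gamma}(q,t)=\sum_{\lambda\vdash n}t^{\sum_i\lambda_i(\lambda_i-1)/2}c_{\lambda}(\Gamma;t)\,e^{(q,t)}_{\lambda}(X),
\end{align*}
expressing the $(q,t)$-chromatic symmetric function in the basis of $(q,t)$-elementary symmetric functions with coefficients independent of $q$.

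Next I would send $q\to\infty$. On the left-hand side, Proposition~\ref{Prop_limit} gives $t^{n(n-1)/2-|\msf{e}|}[n]_t!\,e_n(X)$, while on the right-hand side Lemma~\ref{Lem_qtelem_limit} gives $\lim_{q\to\infty}e^{(q,t)}_{\lambda}(X)=\frac{[n]_t!}{\prod_i[\lambda_i]_t!}e_n(X)$ for each $\lambda$. Since the coefficients $c_{\lambda}(\Gamma;t)$ do not involve $q$, the limit of the right-hand side is
\begin{align*}
	\sum_{\lambda\vdash n}t^{\sum_i\lambda_i(\lambda_i-1)/2}c_{\lambda}(\Gamma;t)\frac{[n]_t!}{\prod_i[\lambda_i]_t!}\,e_n(X).
\end{align*}
Both sides are now scalar multiples of $e_n(X)$, so I may equate the scalar coefficients.

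Finally, the identity $\sum_i \lambda_i(\lambda_i-1)/2 = n(n-1)/2 - |\msf{e}_{\lambda}|$ already recorded in the proof of Proposition~\ref{Prop_limit} lets me rewrite $t^{\sum_i\lambda_i(\lambda_i-1)/2}=t^{n(n-1)/2-|\msf{e}_{\lambda}|}$. Substituting and cancelling the common factor $t^{n(n-1)/2}[n]_t!\,e_n(X)$ from both sides reduces the equality to $t^{-|\msf{e}|}=\sum_{\lambda\vdash n}t^{-|\msf{e}_{\lambda}|}c_{\lambda}(\Gamma;t)/\prod_i[\lambda_i]_t!$; multiplying through by $t^{|\msf{e}|}$ gives exactly the claimed relation. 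I do not anticipate a genuine obstacle here, as the argument is a direct juxtaposition of the two limit formulas; the only point requiring care is bookkeeping the $t$-exponents, in particular invoking the exponent identity from Proposition~\ref{Prop_limit} so that the $t^{\sum_i\lambda_i(\lambda_i-1)/2}$ factors combine cleanly with $|\msf{e}_\lambda|$ rather than being left in a mismatched form.
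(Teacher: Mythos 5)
Your proposal is correct and follows exactly the paper's own proof: apply $\msf{q}$ to the $e$-expansion of $\mb{Y}_{\Gamma}(t)$ via Theorem~\ref{Thm_main} and Lemma~\ref{Lem_iota_elem}, then take $q\to\infty$ using Proposition~\ref{Prop_limit} and Lemma~\ref{Lem_qtelem_limit} and compare coefficients of $e_n(X)$. The exponent bookkeeping via $\sum_i\lambda_i(\lambda_i-1)/2=n(n-1)/2-|\msf{e}_\lambda|$ is exactly the step the paper leaves implicit, and you carry it out correctly.
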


\begin{proof}
	By Lemma~\ref{Lem_iota_elem} and Theorem~\ref{Thm_main}, we obtain
\begin{align*}
	\mb{X}_{\Gamma}(q,t)=\sum_{\lambda\vdash n}c_{\lambda}(\Gamma;t)t^{\sum_{i}\lambda_i(\lambda_i-1)/2}e_{\lambda}^{(q,t)}(X).
\end{align*}
By taking the limit $q\rightarrow\infty$, and use Lemma~\ref{Lem_qtelem_limit} and Proposition~\ref{Prop_limit}, we obtain the result.
\end{proof}

\bibliography{qt-CSF}
\bibliographystyle{plain}

\end{document}